\numberwithin{equation}{section}
\newtheorem{theorem}{Theorem}[section]
\newtheorem{corollary}[theorem]{Corollary}
\newtheorem{conjecture}[theorem]{Conjecture}
\newtheorem{proposition}[theorem]{Proposition}
\newtheorem{lemma}[theorem]{{\bf Lemma}}
\newcounter{hyp_counter}
\theoremstyle{definition}
\newtheorem{definition}[theorem]{Definition}%
\theoremstyle{remark}
\newtheorem{remark}[theorem]{Remark}
\newtheorem{theoalph}{\bf Theorem}
\newcommand{\R}{\mathbb R}
\newcommand{\T}{\mathbb T}
\newcommand{\Z}{\mathbb Z}
\newcommand{\eps}{\varepsilon}
\newcommand{\cU}{\mathcal U}
\newcommand{\cV}{\mathcal V}
\newcommand{\cW}{\mathcal W}
\newcommand{\cT}{\EuScript T}
\renewcommand{\phi}{\varphi}
\title{Deformation and perturbative rigidity near de la Llave examples}
\author{Andrey Gogolev}
\thanks{The first author was partially supported by the NSF grant DMS-2247747.}
\address{Department of Mathematics, The Ohio State University\\ Columbus, OH 43210, USA.}
\email{gogolyev.1@osu.edu}
\author{Martin Leguil}
\thanks{The second author was partially supported by the ANR AAPG 2021 PRC CoSyDy (Grant No. ANR-CE40-0014), by the ANR JCJC PADAWAN (Grant No. ANR-21-CE40-0012), by the ANR NO-LIMIT, and by the LESET Math-AMSUD project.}
\address{\'Ecole polytechnique, CMLS\\
	Route de Saclay, 91128 Palaiseau Cedex, France.}
\email{martin.leguil@polytechnique.edu}
\begin{document}

\begin{abstract}
De la Llave’s examples are Anosov diffeomorphisms on the four-torus $\mathbb{T}^4$ with constant Lyapunov spectrum, yet they are not $C^1$-conjugate to the linear model or to each other. Nevertheless, we show that such examples are ``locally exceptional'': we prove deformation and local rigidity for generic diffeomorphisms in proximity of de la Llave’s examples.
\end{abstract}

\maketitle

\section{Introduction}

This paper is devoted to deformation and perturbative rigidity of Anosov diffeomorphisms in dimension four. Consider an Anosov diffeomorphism $F_0$ on a compact Riemannian manifold $M$. In other words, there exists a $DF_0$-invariant splitting $TM=E^s \oplus E^u$ of the tangent bundle into stable and unstable spaces $E^s$, $E^u$, such that vectors in $E^s$ (resp. $E^u$) get exponentially contracted under forward (resp. backward) iteration of $DF_0$. By structural stability, for any diffeomorphism $G$ which is sufficiently $C^1$-close to $F_0$, there exists a homeomorphism $h$, called a (topological)~\emph{conjugacy}, such that $h\circ F_0=G\circ h$. Yet, there exist plenty of obstructions to the existence of a $C^1$ conjugacy; indeed, if the conjugacy map $h$ above can be chosen to be $C^1$, then for any periodic point $p=F_0^n(p)$, we can differentiate the conjugacy equation and obtain
$$
Dh(p)DF_0^n(p)(Dh(p))^{-1}=DG^n(h(p)),
$$
that is, the differentials $DF_0^n(p)$ and $DG^n(h(p))$ must be conjugate. In fact, for any periodic point $p$, it is easy to arrange the perturbation $G$ such that the above condition fails (and this is typical). Therefore, there are countably many obstructions to lift to hope for the existence of a $C^1$ conjugacy, which are associated to the periodic points. 

We say that $F_0$ and $G$ are~\emph{isospectral} if for every periodic point $p=F_0^n(p)$ the linearized return maps $DF_0^n(p)$ and $DG^n(h(p))$ have the same collection of eigenvalues. In particular, given a one-parameter family\footnote{We assume continuity in $C^2$ topology.} of Anosov diffeomorphisms $\{F_s\}_{s\in[0,1]}$, there exists a unique continuous family of  conjugacies $\{h_s\}_{s\in[0,1]}$ such that $h_s\circ F_0=F_s\circ h_s$ and $h_0=\mathrm{id}$. Accordingly, such family is called~\emph{isospectral} if for every periodic point $p=F_0^n(p)$, all the linearized return maps $DF_s^n(h_s(p))$, $s \in [0,1]$, have the same collection of eigenvalues.  

It is a classical result that for $2$-dimensional Anosov diffeomorphisms, the collection of eigenvalues of periodic points is actually a complete set of moduli of smooth conjugacy classes:
\begin{theorem}[Marco-Moriy\'on, de la Llave~\cite{InvI,InvII,InvIII,dlLSRB}]\label{marco_moriyon_delallave}
	Let $F_0$, $G$ be two $C^r$, $r \in(1,\infty]\cup\{\omega\}$, Anosov diffeomorphisms on $\mathbb{T}^2$ which are topologically conjugated, and whose eigenvalues at corresponding periodic points match. Then the conjugacy is $C^{r_*}$ regular, with \begin{equation*} 
r_*=r,\text{ if }r\notin \mathbb{N},\text{ and }r_*=(r-1)+\mathrm{Lip}, \text{ if }r\in \mathbb{N}.
\end{equation*} 
\end{theorem}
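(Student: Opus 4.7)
The plan is to prove that the topological conjugacy $h$ is itself $C^{r_*}$ by establishing smoothness separately along the stable and unstable foliations of $F_0$, and then appealing to a regularity principle for maps that are smooth along two transverse foliations.

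First, since the stable and unstable distributions are dynamically characterized, $h$ sends the stable (resp. unstable) foliation $\mathcal{F}^s_{F_0}$ to $\mathcal{F}^s_G$ (resp. $\mathcal{F}^u_{F_0}$ to $\mathcal{F}^u_G$). On $\mathbb{T}^2$ these leaves are one-dimensional, and the restriction $h|_{W^s(x)}:W^s(x)\to W^s(h(x))$ is a topological conjugacy between the one-dimensional contracting maps $F_0|_{W^s(x)}$ and $G|_{W^s(h(x))}$. The periodic-data hypothesis says precisely that for every $F_0$-periodic point $p$ of period $n$, the stable multiplier $DF_0^n(p)|_{E^s}$ agrees with $DG^n(h(p))|_{E^s}$, and symmetrically in the unstable direction.

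Second, I would run a Livshitz-cocycle argument. Set $\phi^s(x):=\log|DF_0|_{E^s}(x)|$ and $\psi^s(x):=\log|DG|_{E^s}(h(x))|$. The matching of stable multipliers on every periodic orbit of $F_0$ is exactly the statement that the Birkhoff sums of $\phi^s-\psi^s$ along every such orbit vanish. By the Livshitz theorem, there exists a continuous $u$ with $\phi^s-\psi^s=u\circ F_0-u$. The key step is to upgrade $u$ to $C^{r_*}$ along stable leaves; this is the content of the smooth Livshitz machinery of Marco--Moriy\'on and de la Llave, which bootstraps regularity by propagating it from a single dense stable orbit using the hyperbolic one-dimensional dynamics on the leaf. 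Exponentiating and integrating along an arc-length parameter of the leaf then identifies the $C^{r_*}$ derivative of $h|_{W^s(x)}$. The symmetric argument with $E^u$ produces $C^{r_*}$ regularity of $h$ along unstable leaves.

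Third, I would invoke Journ\'e's lemma: a continuous map which is uniformly $C^{r_*}$ along two transverse continuous foliations with $C^{r_*}$ leaves is itself $C^{r_*}$. Since the invariant foliations of an Anosov diffeomorphism on $\mathbb{T}^2$ are transverse with sufficiently regular leaves, this yields the global regularity of $h$. The main technical obstacle is the smooth Livshitz step: upgrading a merely continuous coboundary solution to a solution $C^{r_*}$ along leaves requires a delicate analysis of how regularity propagates under the (un)stable dynamics along an orbit, and it is precisely this step that forces the loss of one derivative in the integer case, giving rise to the $(r-1)+\mathrm{Lip}$ exponent in the statement; the remainder of the argument is essentially formal once this is in hand.
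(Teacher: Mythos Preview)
The paper does not prove this theorem; it is stated as a classical result and attributed via citations to Marco--Moriy\'on and de la Llave, with no argument given in the text. There is therefore no in-paper proof to compare your proposal against.

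Your sketch does follow the standard strategy of those cited works: (i) the conjugacy respects the stable and unstable foliations; (ii) a Livshitz argument on the leafwise logarithmic derivatives produces a transfer function whose exponential is the derivative of $h$ along leaves, yielding uniform $C^{r}$ regularity of $h$ along each one-dimensional leaf; (iii) Journ\'e's lemma then promotes leafwise regularity to global regularity. One point to correct: the loss of a derivative in the integer case, giving $(r-1)+\mathrm{Lip}$ rather than $C^r$, is a feature of Journ\'e's lemma (which is sharp only for non-integer H\"older exponents), not of the Livshitz step as you state. The leafwise argument in fact delivers the full $C^r$ regularity along leaves; it is the transverse gluing that costs a derivative when $r\in\mathbb{N}$.
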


In particular, in this low-dimensional situation, once the conjugacy is $C^1$, due to the bootstrap phenomenon, it is actually as regular as the Anosov diffeomorphisms themselves.
In higher dimension, it is not always possible to bootstrap the regularity of a $C^1$ conjugacy between Anosov diffeomorphisms. The lack of bootstrap phenomenon is typically related to the lack of regularity of the leaves of intermediate (weak stable/unstable) foliations, when they exist.  

\begin{definition}  A $C^r$ Anosov diffeomorphism $F_0$ is called $C^r$-\emph{locally rigid}, $r \geq 2$, if there exists a neighborhood $\mathcal{U}$ of $F_0$ in the $C^r$ topology such that for any diffeomorphism $G\in \mathcal{U}$ that is isospectral to $F_0$, the corresponding conjugacy $h$ is, in fact, a $C^{1+\textup{H}}$ diffeomorphism.\footnote{Here, $C^{1+\textup{H}}$ means $C^1$ with H\"older continuous differential.} Similarly, $F_0$ is called $C^r$-\emph{deformation rigid} if for any isospectral one-parameter family $\{F_s\}_{s\in[0,1]}$ based at $F_0$ such that the map $ s\mapsto F_s$ is continuous in the $C^r$ topology, the corresponding family of conjugacies $h_s$ is, in fact, a family of $C^{1+\textup{H}}$ diffeomorphisms.
\end{definition}

In higher dimension, there exist counterexamples to the deformation and local rigidity of Anosov diffeomorphisms constructed by de la Llave~\cite{dlLSRB}, which we now proceed to recall. 

\subsection{de la Llave examples}\label{subs_DeLaLlave}
 Let $A$ and $B$ be automorphisms of the 2-torus $\T^2$ induced by hyperbolic matrices in $\mathrm{SL}(2,\Z)$. We will assume that the smaller eigenvalues $\lambda$ and $\mu$ of $A$ and $B$, respectively, satisfy the following inequalities: $0<\lambda<\mu<1$. Define $\alpha=\log\mu/\log\lambda$ and notice that $\alpha\in(0,1)$. 

 \begin{definition}
  A {\it de la Llave diffeomorphism} $L_{\phi_0}\colon \T^4\to\T^4$ is defined as a skew-product
$$
L_{\phi_0}(x,y)=(Ax, By+\phi_0(x)), \quad\forall\, (x,y)\in\T^2\times\T^2,
$$
where $\phi_0\colon \T^2\to\T^2$ is a smooth function.
\end{definition}
Such diffeomorphism is Anosov and, if $\phi_0$ is homotopic to a constant, is conjugate to the linear product automorphism $L_0$. More generally, if $\phi_1$ is homotopic to $\phi_0$ then the corresponding de la Llave diffeomorphism $L_{\phi_1}$ is conjugate to $L_{\phi_0}$ with conjugacy $h$ given by
\begin{equation}\label{conj_map}
h(x,y)=(x,y+\psi(x)), \quad\forall\, (x,y)\in\T^2\times\T^2,
\end{equation}
where $\psi\colon\T^2\to\T^2$ is null-homotopic. 
Indeed, a solution of the form in~\eqref{conj_map} to the conjugacy equation $h\circ L_{\varphi_0}=L_{\varphi_1}\circ h$ can be obtained explicitly by solving the cohomological equation for $\phi_1-\phi_0$:
\begin{equation}\label{co_eq_conj_map}
(\varphi_1-\varphi_0)(x)=\psi\circ A(x) - B \circ \psi(x),\quad \forall\, x\in \mathbb{T}^2.
\end{equation}
We can check that this equation has a unique solution $\psi\in C^\alpha(\T^2)$, and then, the associated map $h$ is the unique conjugacy between $L_{\varphi_0}$ and $L_{\varphi_1} $ in the homotopy class of the identity, and it is also $C^\alpha$ regular. However, for some simple choices of $\phi_1-\phi_0$, and even for a generic choice, the function $\psi$ is not  $C^{\alpha+\eps}$ regular, for any $\eps>0$. Let us, for example, consider the case where $\varphi_1-\varphi_0$ is a single Fourier mode $x\mapsto e_u \cos (2\pi \langle k_0,x\rangle)$, $k_0 \in \mathbb{Z}^2\setminus \{(0,0)\}$, where $\langle\cdot,\cdot \rangle$ is the function on $\mathbb{T}^2 \times \mathbb{T}^2$ induced by the Euclidean inner product on $\mathbb{R}^2$, and $e_u$ is an unstable eigenvector for $B$, $B e_u=\mu^{-1}e_u$. The solution $\psi$ to~\eqref{co_eq_conj_map} is of the form $\tilde{\psi}e_u$, for some function $\tilde{\psi}\colon \mathbb{T}^2 \to \mathbb{R}$. Considering the Fourier series $\sum_{k\in \mathbb{Z}^2} \hat \psi_k  e^{\mathrm{i} 2\pi \langle k,x\rangle}$ of $\tilde{\psi}$, 
we find that 
$$
\hat \psi_{\pm (A^\top)^{n}k_0}=-\mu^{n+1},\quad \forall\, n\geq 0,\qquad \hat \psi_{k}=0,\quad \forall\, k \notin \{\pm (A^\top)^{n} k_0\}_{n\geq 1}\cup \{(0,0)\}.
$$
For some constant $K>0$, we have $|(A^\top)^{n}k_0|\geq K \lambda^{-n}$, hence the decay of the Fourier coefficients ensures that $\tilde{\psi}\in C^\alpha$ but  $\tilde{\psi}\notin C^{\alpha+\eps}$, for any $\eps>0$. Consequently, the conjugacy $h$ is merely $C^\alpha$ regular despite the fact that all eigenvalues at corresponding periodic points (and Lyapunov exponents of all corresponding measures) of $L_{\phi_0}$ and $L_{\phi_1}$ are the same. This very interesting phenomenon was discovered and explained by de la Llave in~\cite{dlLSRB}, see also~\cite{Go, GRH} for later expositions. De la Llave diffeomorphisms also demonstrate several other interesting dynamical features. For example, they are non-linear Anosov diffeomorphisms for which the invariant volume measure has maximal entropy; they could be used to exhibit insufficient regularity of weak foliations (when $\alpha>1$), etc.

Furthermore, by linearly extrapolating into a family of diffeomorphisms $\{L_{\phi_s}\}_{s \in [0,1]}$, $\phi_s=\phi_0+s(\phi_1-\phi_0)$, $s\in [0,1]$, we obtain an isospectral family, yet the conjugacy between any two diffeomorphisms in the family is merely $C^{\alpha}$.

The first author studied local $C^{1+\textup{H}}$-conjugacy classes of de la Llave diffeomorphisms in~\cite{Go}. In particular, it was established that a $C^{1+\textup{H}}$ isospectral perturbation of a de la Llave diffeomorphism $L_{\phi_0}$ is $C^{1+\textup{H}}$-conjugate to a de la Llave diffeomorphism $L_{\phi_1}$, where $\phi_1$ is $C^1$-close to $\phi_0$. Hence, de la Llave examples completely absorb the failure of periodic eigenvalue rigidity for the product automorphism $L_0$. This naturally raises the question of description of local $C^{1+\textup{H}}$-conjugacy classes for perturbations of $L_0$ and, more generally, of $L_{\phi_0}$.

\subsection{Deformation and perturbative rigidity}

The following theorems constitute progress on a smooth classification in proximity of de la Llave diffeomorphisms. Recently, Rafael de la Llave communicated to the authors that he and others considered the possibility of such rigidity in proximity of the examples at the time when examples were discovered. 

\begin{theoalph}\label{main_theorem}
    Let $L_\phi$ be a de la Llave diffeomorphism. Then there exist a $C^1$ small neighborhood $\cU$ of $L_\phi$ and a $C^2$-open $C^\infty$-dense subset $\cV\subset\cU$ such that each Anosov diffeomorphism $F\in\cV$ is $C^2$-deformation rigid.
\end{theoalph}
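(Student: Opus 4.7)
The plan is to isolate the ``pathological'' locus $\mathcal{D}\subset\cU$ consisting of diffeomorphisms that are $C^{1+\textup{H}}$-conjugate to some de la Llave diffeomorphism, and to take $\cV=\cU\setminus\mathcal{D}$. The substantive content is a dichotomy: for any $F\in\cU$ with an isospectral family $\{F_s\}_{s\in[0,1]}$, either the conjugacies $h_s$ are $C^{1+\textup{H}}$, or $F\in\mathcal{D}$.

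The first reduction is to strong-direction regularity. By isospectrality, the Lyapunov data of $F_s$ along the strong stable and strong unstable directions (rates $\lambda^{\pm n}$) match those of $F$ at corresponding periodic points. A Livsic-type cohomological argument in the spirit of Theorem~\ref{marco_moriyon_delallave} upgrades the restriction of $h_s$ to each strong leaf from continuous to $C^{1+\textup{H}}$. By a Journ\'e-type principle, it then suffices to establish $C^{1+\textup{H}}$-regularity of $h_s$ along the weak (stable and unstable) foliations.

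Suppose $h_s$ fails to be $C^{1+\textup{H}}$ across a weak foliation at some $s$. Extending Gogolev's classification~\cite{Go} from single isospectral perturbations to $C^2$-continuous families, I would show that this failure produces a nontrivial $C^{1+\textup{H}}$ solution $\psi_{s_1,s_2}$ to the analogue of~\eqref{co_eq_conj_map}, with right-hand side determined by the weak-direction periodic invariants of $F_{s_i}$. This forces $F_s$ to carry a $C^{1+\textup{H}}$ skew-product structure, i.e.\ $F_s\in\mathcal{D}$ for every $s$ in the family; hence $F=F_0\in\mathcal{D}$, contradicting $F\in\cV$. The set $\cV=\cU\setminus\mathcal{D}$ is $C^2$-open because $C^{1+\textup{H}}$-conjugacies form a precompact family by Arzel\`a--Ascoli and the space of de la Llave skew products $\{L_\psi\}$ is $C^2$-closed; it is $C^\infty$-dense because the single-Fourier-mode computation reproduced in Subsection~\ref{subs_DeLaLlave} shows that any $L_\psi$ admits arbitrarily small $C^\infty$ perturbations whose periodic-orbit data is incompatible with any skew-product form up to $C^{1+\textup{H}}$ conjugacy.

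\textbf{Main obstacle.} The central difficulty is the parameterized upgrade of the classification in~\cite{Go}: a single value of $s$ at which regularity fails must already force the entire family into $\mathcal{D}$. This demands a careful Moser-type parameter argument combined with uniform H\"older estimates on the weak-foliation holonomies of $F_s$, complicated by the fact that these weak foliations are themselves only H\"older-regular and vary with $s$ in a controlled but non-smooth way. A secondary obstacle is establishing $C^2$-closedness of $\mathcal{D}$: one must show that the $C^{1+\textup{H}}$-conjugacy class of a de la Llave skew product is preserved under $C^2$-limits of the ambient diffeomorphism, despite the a priori merely H\"older regularity of the conjugating homeomorphisms.
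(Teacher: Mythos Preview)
Your proposal has a genuine and fatal gap in the ``first reduction''. You write that a Livsic-type argument upgrades the restriction of $h_s$ to each strong leaf to $C^{1+\textup{H}}$, and that it then suffices to handle the weak foliations. This is exactly backwards. The conjugacy $h_s$ \emph{always} preserves the weak foliations (this is Lemma~\ref{lemma2}, a soft argument comparing growth rates on the universal cover). The entire difficulty is that $h_s$ need not send strong leaves to strong leaves: in de la Llave's examples themselves the conjugacy does \emph{not} preserve $W^{ss}$, and this is precisely why it is only $C^\alpha$. Your Livsic step for strong directions is therefore circular --- it presupposes $h_s(W^{ss}_F)=W^{ss}_{F_s}$, which is the heart of the problem (and is what Lemma~\ref{lemma3} takes as a hypothesis, not a conclusion).

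Two further issues. First, your density argument for $\cV=\cU\setminus\mathcal{D}$ cannot work via periodic data: all de la Llave diffeomorphisms $L_\psi$ share \emph{identical} periodic eigenvalue data (constant spectrum $\hat\mu,\mu,\mu^{-1},\hat\mu^{-1}$), so no perturbation of periodic-orbit invariants can separate them. The paper's perturbation (Proposition~\ref{prop_trois_quatre}) instead modifies a second-order jet of the Jacobian at a single homoclinic point to make a certain template quantity nonzero --- an entirely different mechanism. Second, the dichotomy ``either $h_s$ is $C^{1+\textup{H}}$ or $F\in\mathcal{D}$'' is essentially the conjecture stated after Theorem~\ref{main_theorem_bis}, not something known; the paper does not prove it, and the ``parameterized upgrade of~\cite{Go}'' you invoke does not exist in the literature. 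What the paper actually does is pass to suspension flows with roof function $\log\textup{Jac}$ (so that isospectrality becomes flow conjugacy), derive an asymptotic expansion for periods of shadowing orbits whose leading coefficient detects the position of $W^{ss}$ inside $W^s$ (Proposition~\ref{prop_trois_un}), arrange by localized perturbation that this coefficient is nonzero and $C^2$-robustly so, and then use an intermediate-value argument in the deformation parameter to force $h_s(W^{ss})=W^{ss}$ (Proposition~\ref{prop_main_tech_match}).
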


\begin{theoalph}\label{main_theorem_bis}
    Let $L_\phi$ be a de la Llave diffeomorphism. Then there exist a $C^1$ small neighborhood $\cU$ of $L_\phi$ and a $C^2$-open $C^\infty$-dense subset $\cV\subset\cU$ such that each Anosov diffeomorphism $F\in\cV$ is $C^2$-locally rigid.
\end{theoalph}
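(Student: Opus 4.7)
The plan is to deduce Theorem~\ref{main_theorem_bis} from Theorem~\ref{main_theorem} by constructing, for any $F\in\cV$ and any $C^2$-close isospectral perturbation $G\in\cU$, an isospectral $C^2$-continuous one-parameter family $\{F_s\}_{s\in[0,1]}\subset\cU$ with $F_0=F$ and $F_1=G$. Deformation rigidity then yields a continuous family of $C^{1+\textup{H}}$ conjugacies $\{h_s\}$ with $h_0=\mathrm{id}$, so that in particular $h_1$ is the desired $C^{1+\textup{H}}$ conjugacy between $F$ and $G$.

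To build such a path, I would first consider the naive linear interpolation $\tilde F_s:=F+s(G-F)$, which is $C^2$-continuous in $s$ and, after possibly shrinking $\cU$, remains in $\cU$. By structural stability, this family admits topological conjugacies $h_s\colon F\to \tilde F_s$, but $\tilde F_s$ is typically not isospectral to $F$ for $s\in(0,1)$: the linearized return maps $D\tilde F_s^n(h_s(p))$ and $DF^n(p)$ at corresponding periodic points need not have the same eigenvalues. To remove this defect, one would look for a $C^2$-continuous family $\eps_s$ of small additive perturbations such that $F_s:=\tilde F_s+\eps_s$ is exactly isospectral to $F$. The search for $\eps_s$ is governed by the linearization at $F$ of the ``periodic data'' map---a Liv\v{s}ic-type cohomological operator that sends a $C^2$ vector field perturbation to its periodic Birkhoff sums projected onto each of the four invariant line sub-bundles of $DF$ (strong-stable, weak-stable, weak-unstable, strong-unstable). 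One would then show that on $\cV$ this linearization admits a bounded right inverse in a suitable Banach space of perturbations, so that the correction equation can be solved by an implicit function theorem or contraction mapping argument, producing $\eps_s$ small and continuously dependent on $s$.

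The hard part will be the analytic setup of this correction step. The periodic-data constraint is infinite-dimensional (one condition per periodic orbit), and the right inverse must simultaneously be bounded, preserve $C^2$-regularity, and give quantitative control over $\|\eps_s\|_{C^2}$ to ensure the family $\{F_s\}$ stays in $\cU$. This is presumably the very piece of hard analysis that underlies the proof of Theorem~\ref{main_theorem}: the generic condition defining $\cV$ encodes exactly the non-degeneracy needed to invert the periodic-data linearization. Once the path $\{F_s\}$ has been produced, Theorem~\ref{main_theorem} applied to it concludes the proof, Theorem~\ref{main_theorem_bis} being the evaluation of Theorem~\ref{main_theorem} at the endpoint $s=1$.
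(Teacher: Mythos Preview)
Your proposal has a genuine gap: the construction of an isospectral path from $F$ to $G$ is not justified, and your speculation about what makes it work is incorrect. The ``periodic-data map'' you describe sends a diffeomorphism to an infinite collection of eigenvalue data; making this into a smooth map between Banach spaces with a bounded right inverse is not a standard piece of machinery, and the paper contains nothing of the sort. More importantly, your guess that ``the generic condition defining $\cV$ encodes exactly the non-degeneracy needed to invert the periodic-data linearization'' is wrong. The condition defining $\cV$ is the non-vanishing of the coefficient $\hat\zeta_{p^G}(q) = \hat{\mathcal{T}}_{p^G}^{ws}(\eta,\hat\eta) - \hat P_{p^G}(\eta,\hat\eta)$ in the leading exponential term of certain period asymptotics (Proposition~\ref{prop_trois_quatre}); it has nothing to do with surjectivity of a linearized periodic-data operator.

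The paper does \emph{not} reduce Theorem~\ref{main_theorem_bis} to Theorem~\ref{main_theorem}. Instead it proves a direct perturbative analogue of the key technical step: Proposition~\ref{prop_main_tilda} asserts that for $G\in\cW$ and any isospectral $G_1$ which is $C^2$-close to $G$, the conjugacy sends $W^{ss}_{G,\mathrm{loc}}(p^G)$ to $W^{ss}_{G_1,\mathrm{loc}}(p^{G_1})$. The mechanism is the same period expansion as in the deformation case, but the conclusion uses only that the sign of $\hat\zeta$ is stable under $C^2$-small perturbation (Lemma~\ref{lemme_stable_nonzero}); one then compares the signs of the leading coefficients $\hat\omega_p$ for $G$ and $G_1$ directly and derives a contradiction if the strong stable manifolds are not matched. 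No intermediate-value argument along a path, and no isospectral interpolation, is needed. Once Proposition~\ref{prop_main_tilda} is in hand, Theorem~\ref{main_theorem_bis} follows by the same assembly argument (Lemmata~\ref{lemma3}--\ref{lemma4} and the density propagation from~\cite{GG}) used to derive Theorem~\ref{main_theorem} from Proposition~\ref{prop_main}.
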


\begin{remark}
   We note that there is no obvious way to deduce Theorem~\ref{main_theorem} from Theorem~\ref{main_theorem_bis}. The issue is that an isospectral deformation is not required to be local and stay in a neighborhood of $F$ (moreover, can leave the neighborhood of $L_\phi$) and, hence, one cannot apply Theorem~\ref{main_theorem_bis} to the path, but only to the beginning of the path. One then could attempt covering the path by a finite number of small neighborhoods and gradually conjugate the whole path to $F$. However, this approach is problematic since the conjugacy we obtain is only $C^{1+\textup{H}}$ regular, and it seems hard to make our proof work in this low regularity predicament. 
\end{remark}

\begin{remark}
To keep the exposition clean, we work with $C^\infty$ diffeomorphisms. But by carefully inspecting the proof, one can check that the same results are true for $C^4$ diffeomorphisms.  
\end{remark}

The latter result suggests the following open questions.
\begin{enumerate}[label=\textbf{(Q\arabic*)},ref=(Q\arabic*)]
\item\label{Qu1} 
{\it Global rigidity.} Show that for $F\in \cV$, any Anosov diffeomorphism $G$ which is isospectral to $F$ is $C^1$-conjugate to $F.$
\item\label{Qu2} 
{\it Volume-preserving 
version.} Show that there exists a $C^1$ small neighborhood $\cU^{\mathrm{vol}}$ 
of $L_\phi$ in the space of volume-preserving 
Anosov diffeomorphisms and a $C^2$-open $C^\infty$-dense subset $\cV^{\mathrm{vol}}\subset\cU^{\mathrm{vol}}$ 
such that each $F\in\cV^{\mathrm{vol}}$ 
is ($C^2$-locally) rigid. 
\item\label{Qu3}  
{\it Jacobian rigidity.} Show that one can weaken the isospectrality assumption in Theorems~\ref{main_theorem} and~\ref{main_theorem_bis} to mere matching of Jacobians at corresponding periodic points.
\end{enumerate}

\begin{remark}
    Let us make some observations on these questions: 
    \begin{itemize}
        \item our strategy relies on certain period expansions near periodic points with dissipative behavior along the center (see e.g. Proposition~\ref{prop_trois_un}); in the volume-preserving case, as suggested in Question~\ref{Qu2}, such expansions look different (see~\cite[Proposition 4.17]{GLRH}), and the leading exponential term mixes data coming from the (weak-)stable and unstable directions, which would make harder the identification of bifurcations such as those we research, e.g., in Proposition~\ref{prop_main_tech_match}; also, considering suspensions via logarithmic full Jacobian is not useful in the volume-preserving setting;
        \item regarding Question~\ref{Qu3}, while the expansions we consider here do assume matching of eigenvalues along the different directions, we may be able to derive such information from matching of the full Jacobian, similarly to what was achieved in~\cite[Theorem~A-Corollary~B]{GLRH}. 
    \end{itemize}
\end{remark}

In fact, some years ago the first author gave a conjectural description of all possible ``generalized de la Llave" examples. While our results give a definitive progress on rigidity near de la Llave examples, they still fall short of such an explicit description. We record this conjecture here.

\begin{conjecture}
    Let $L_\phi$ be a de la Llave diffeomorphism and let $\cU$ be a small $C^1$-neighborhood of it where the $4$-way dominated splitting survives. Let $F\in\cU$ and assume that the center foliation $W^c_F$ by $2$-tori, which is the integral foliation of the weak stable and weak unstable distributions of $F$, is not $C^1$ regular as a foliation.\footnote{See also Remark~\ref{remark_regularity} for additional insights about why anomalous regularity of invariant foliations can be an issue in the study of rigidity questions.} Then $F$ is $C^1$-locally rigid.
\end{conjecture}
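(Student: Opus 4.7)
The plan is to use the non-$C^1$ regularity of the center foliation $W^c_F$ as the dynamical mechanism enforcing rigidity, playing a role analogous to the $C^2$-open condition defining the subset $\cV$ in Theorem~\ref{main_theorem_bis}. Throughout, let $G$ be any Anosov diffeomorphism $C^1$-close to $F$ and isospectral to it, with topological conjugacy $h$ so that $h\circ F=G\circ h$; the goal is to upgrade $h$ to a $C^{1+\textup{H}}$ diffeomorphism.

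First, since $\cU$ is chosen so that the $4$-way dominated splitting survives, both $F$ and $G$ admit invariant splittings $T\T^4=E^{ss}\oplus E^{ws}\oplus E^{wu}\oplus E^{uu}$, and $h$ carries the corresponding strong and weak invariant foliations of $F$ to those of $G$. The strong stable and strong unstable foliations $W^{ss}$, $W^{uu}$ have smooth leaves on which $F^n$ acts as a one-dimensional hyperbolic map. Combining matching of periodic eigenvalues in the strong directions with a Marco--Moriy\'on--de la Llave argument (Theorem~\ref{marco_moriyon_delallave}) applied leaf-by-leaf, and then globalized through a Journ\'e-type lemma, one obtains $C^{1+\textup{H}}$ regularity of $h$ along $W^{ss}_F$ and $W^{uu}_F$. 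It then suffices to establish $C^{1+\textup{H}}$ regularity of $h$ along the center foliation $W^c_F$, whose leaves are $2$-tori tangent to $E^{ws}_F\oplus E^{wu}_F$.

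The decisive step is to convert the geometric hypothesis -- that $W^c_F$ fails to be $C^1$ -- into exploitable periodic data. For the model examples $L_\phi$, the obstruction to $C^1$ regularity of $W^c$ is precisely the resonance $\alpha=\log\mu/\log\lambda\in(0,1)$ between weak and strong Lyapunov exponents, which via the cohomological equation~\eqref{co_eq_conj_map} produces the explicit Fourier-tail behavior recalled in Subsection~\ref{subs_DeLaLlave}. I would try to show that if $W^c_F$ is not $C^1$, then at a topologically significant collection of periodic orbits the linearized return map exhibits the same dissipative/resonant structure along the center direction that triggers the period-expansion arguments used in the proof of Theorem~\ref{main_theorem_bis} (cf.\ Proposition~\ref{prop_trois_un} and Proposition~\ref{prop_main_tech_match}). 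This would place $F$ inside a subset of $\cU$ on which the rigidity mechanism of Theorem~\ref{main_theorem_bis} applies directly and yields the required smoothness of $h$ along $W^c_F$.

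The hard part is exactly this last implication: converting a \emph{negative} regularity statement about the center foliation into a \emph{positive}, periodic-data-detectable statement. One needs an intrinsic notion of ``center resonance at a periodic orbit'' which (i) is forced at sufficiently many periodic orbits by the failure of $C^1$ regularity of $W^c_F$, (ii) is automatically preserved by any isospectral topological conjugacy, and (iii) is quantitatively strong enough to feed the bifurcation analysis of Proposition~\ref{prop_main_tech_match}. A Pesin-theoretic analysis on individual center leaves combined with careful control of resonances along periodic orbits looks like the natural framework, but making these three requirements simultaneously compatible is the genuine difficulty, and presumably the reason the statement is offered only as a conjecture.
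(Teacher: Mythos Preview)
The statement you are attempting to prove is recorded in the paper as a \emph{conjecture}; there is no proof in the paper to compare against. Your proposal, moreover, contains two substantive errors that would need to be addressed before it could serve even as a plausible strategy.

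First, your opening step asserts that the conjugacy $h$ ``carries the corresponding strong and weak invariant foliations of $F$ to those of $G$.'' Preservation of the weak foliations is indeed Lemma~\ref{lemma2}, but preservation of the \emph{strong} foliations is precisely the heart of the matter and is \emph{not} automatic: the de la Llave examples themselves are conjugate isospectral pairs whose conjugacy fails to match strong foliations (this is exactly the mechanism recalled in Subsection~\ref{subs_DeLaLlave} and Subsection~\ref{subs_geometric_mechanism}). Lemma~\ref{lemma3} says that \emph{if} one already knows $h(W^{ss}_F)=W^{ss}_G$ and $h(W^{uu}_F)=W^{uu}_G$, then $h$ is $C^{1+\textup{H}}$; the entire content of Propositions~\ref{prop_main}--\ref{prop_main_tilda} and the period-expansion machinery is to \emph{establish} that matching. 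Your leaf-by-leaf Marco--Moriy\'on--de la Llave argument along $W^{ss}_F$ presupposes the conclusion.

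Second, you misidentify the mechanism behind non-$C^1$ regularity of $W^c_F$. The ratio $\alpha=\log\mu/\log\lambda$ and the Fourier-tail computation in Subsection~\ref{subs_DeLaLlave} concern the regularity of the \emph{conjugacy} between two de la Llave diffeomorphisms, not the regularity of $W^c$. In fact, for any de la Llave diffeomorphism $L_\phi$ one has $E^{ws}_\phi=E^{ws}_0$ and $E^{wu}_\phi=E^{wu}_0$, so the center foliation is the constant horizontal torus fibration and is smooth; thus $L_\phi$ itself does \emph{not} satisfy the hypothesis of the conjecture. The correct periodic-data characterization is given in Remark~\ref{rmk_centerfol}: $W^c_F$ is $C^1$ if and only if the \emph{strong} stable and unstable exponents are constant along each center leaf among periodic points. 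This is a statement about variation of strong exponents, not about center resonance, and it is not clear how to feed it into Proposition~\ref{prop_main_tech_match}, whose input is a non-vanishing template quantity at a single center-dissipative periodic point rather than a comparison of exponents across center leaves.
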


\begin{remark}\label{rmk_centerfol}
    The condition ``$W^c_F$ is not $C^1$ regular" can be fully understood in terms of periodic data of $F$. Namely, $W^c_F$ is $C^1$ regular if and only if for any periodic point $p$ and any other periodic point $q\in W^c_F(p)$ the strong stable and strong unstable Lyapunov exponents of $p$ and $q$ are equal. This fact can be derived via a similar argument to the characterization of $C^1$ weak unstable foliations on $\T^3$ given in~\cite{G12}.
\end{remark}

Let us briefly summarize the main steps of the proof of Theorem~\ref{main_theorem_bis}. Fix a diffeomorphism $F \in \mathcal{V}$, and a diffeomorphism $G$ which is sufficiently $C^2$-close to $F$ and isospectral to $F$. 
\begin{enumerate}
    \item We consider the suspension flows $X^t$ and $Y^t$ over $F$ and $G$, respectively, with roof functions given by the logarithmic full Jacobian (+const); by the isospectrality condition, the resulting flows are conjugate, i.e., for some homeomorphism $H$, we have $H\circ X^t=Y^t \circ H$, for all $t \in \mathbb{R}$. 
    \item We fix two periodic points $p=X^{T}(p),\tilde{p}=X^{\tilde{T}}(\tilde{p})$, such that $DX^{T}(p),DX^{\tilde{T}}(\tilde{p})$ expands, resp. contracts volume along the ``center''; given homoclinic points $q$ and $\tilde{q}$ to $p$ and $\tilde{p}$, respectively, we consider sequences $(p_n)$, $(\tilde{p}_n)$ of periodic points in the associated horseshoe and study the asymptotics of their periods as $n \to +\infty$ (see Proposition~\ref{prop_trois_un}). 

    \item For suitable choice of $q,\tilde{q}$, we show that matching of the periods of $p_n$ and $H(p_n)$, resp. $\tilde{p}_n$ and $H(\tilde{p}_n)$ for the flows $X^t$, $Y^t$ forces the conjugacy $H$ to send the strong stable manifold of $p$ to the strong stable manifold of $H(p)$, resp. the strong unstable manifold of $\tilde{p}$ to the strong unstable manifold of $H(\tilde{p})$ (see Propositions~\ref{prop_main}-\ref{prop_main_tilda}). 
    More specifically, the asymptotic formulae for the periods allow us to recover the coefficients (one coefficient for each choice of $q,\tilde q$) by the leading exponential terms. This countable collection of coefficients ``have full knowledge" of (completely determine) the position of the strong unstable manifold of $p$ (resp. $\tilde p$) inside the $2$-dimensional unstable manifold. And matching of the coefficients forces matching of strong stable manifolds under $H$. This step constitutes the main novel technique of this paper.
    \item We show that the above condition implies that $H$ preserves strong stable and strong unstable foliations. 
    \item From the preservation of invariant foliations and the isospectrality condition, we conclude that the conjugacy $H$ is $C^{1+\textup{H}}$ regular, and similarly for the conjugacy between the diffeomorphisms $F$ and $G$. 
\end{enumerate} 

In Appendix~\ref{comments_thmc}, we present a rigidity theorem at the level of $5$-dimensional Anosov flows to elucidate key mechanisms and offer additional insight into the proofs of Theorems~\ref{main_theorem} and~\ref{main_theorem_bis}. Indeed, as outlined in the proof sketch for Theorems~\ref{main_theorem}-\ref{main_theorem_bis}, our approach centers on analyzing the conjugacy of $5$-dimensional Anosov flows that are suspensions over diffeomorphisms near de la Llave’s examples. Some properties demonstrated in this specific context actually extend to a broader class of $5$-dimensional Anosov flows; the purpose of Theorem~\ref{claim_improved_thm}  is to formalize this generalization. 
However, the proofs of Theorems~\ref{main_theorem} and~\ref{main_theorem_bis} remain formally independent of Theorem~\ref{claim_improved_thm}.

\begin{remark}\label{remark_regularity}
Complementing Remark~\ref{rmk_centerfol},  the regularity of invariant foliations also appeared as a possible issue for rigidity in the context of $3$-dimensional Anosov flows, see~\cite[Theorem~E]{GLRH}. In fact, it is directly related to one of the ``genericity'' conditions we impose in the definition of the set $\mathcal{V}$ in Theorems~\ref{main_theorem}-\ref{main_theorem_bis}. Indeed, we require the non-vanishing of a quantity related to certain ``templates'' of the suspension flows $X^t$ and $Y^t$, which can be thought of as the temporal coordinate of some invariant distributions (see  Section~\ref{sect_prepa} for the precise definition of these objects, and Proposition~\ref{prop_trois_quatre} for the associated genericity condition). 
     By following the approach developed in~\cite{GLRH}, 
     we claim 
     that for those $F \in \mathcal{V}$ which are non-rigid, the aforementioned templates of the  suspension flow $X^t$ exhibit anomalous regularity, which itself reflects anomalous regularity of the associated distributions, as described in items~\eqref{anomalous_reg1}-\eqref{anomalous_reg2} of Theorem~\ref{claim_improved_thm}. 
     In particular, the genericity condition we need to impose on $F\in \mathcal{V}$ is related to their Jacobian, to avoid falling into cases~\eqref{anomalous_reg1}-\eqref{anomalous_reg2} in Theorem~\ref{claim_improved_thm}. \vspace{0.2cm}
     
{\bfseries Acknowledgements.}
     The first author was supported by the Simons Fellowship during the 2024-25 academic year. The first author
is grateful for excellent working conditions provided by IHES and by the Mathematics Department at Université Paris-Saclay and especially to Sylvain Crovisier for his hospitality. This paper was greatly influenced by the authors' joint work with Federico Rodriguez Hertz~\cite{GLRH} who we thank for many inspiring discussions as well as comments on the first draft. We also thank Jonathan DeWitt for his insightful feedback on the first draft of this paper.
\end{remark}


%

\section{Preparations and structure of the proof}\label{sect_prepa}

Let us fix an automorphism $L_0=(A,B)$, and a de la Llave diffeomorphism $L_\varphi$ as in Subsection~\ref{subs_DeLaLlave}. 
We first explain how the neighborhood $\cU$ is chosen. The automorphism $L_0$ admits constant dominated splitting $T\T^4=E^{ss}_0\oplus E^{ws}_0\oplus E^{wu}_0\oplus E^{uu}_0$ according to the eigendirections corresponding to $\lambda$, $\mu$, $\mu^{-1}$ and $\lambda^{-1}$, respectively. It is a standard exercise on the cone technique to check that a de la Llave diffeomorphism $L_\phi$ also admits a dominated splitting $E^{ss}_\phi\oplus E^{ws}_\phi\oplus E^{wu}_\phi\oplus E^{uu}_\phi$ with the same exponential rates as $L_0$. In fact $E^{ws}_\phi=E^{ws}_0$ and $E^{wu}_\phi=E^{wu}_0$, but the strong subbundles become different. The domination condition is $C^1$-open, so we can choose an open set $\cU\ni L_\phi$ such that each $F\in\cU$ is Anosov and admits a dominated splitting $E^{ss}_F\oplus E^{ws}_F\oplus E^{wu}_F\oplus E^{uu}_F$ which is partially hyperbolic, with $2$-dimensional center $E^{ws}_F\oplus E^{wu}_F$. Let $E_F^s:=E_F^{ss}\oplus E_F^{ws}$ and $E_F^u:=E_F^{uu}\oplus E_F^{wu}$ be the full ($2$-dimensional) stable and unstable bundles. Since the decomposition $E_F^s\oplus E_F^u$ is hyperbolic, $E_F^s$ and $E_F^u$ integrate uniquely to stable and unstable foliations $W_F^s$ and $W_F^u$, respectively. From partially hyperbolic theory (see~\cite{HirschPughShub}) we have that the strong distributions $E^{ss}_F$ and $E^{uu}_F$ uniquely integrate to strong foliations $W^{ss}_F$ and $W^{uu}_F$, respectively. Furthermore, a standard argument on the universal cover ensures that the weak distributions $E^{ws}_F$ and $E^{wu}_F$ integrate uniquely to weak foliations $W^{ws}_F$ and $W^{wu}_F$, respectively~\cite{GG}. Moreover, $E^{ws}_F$ and $E^{wu}_F$ integrate jointly to a ``center'' foliation by tori, which we denote  by $W^c_F$, whose leaves are subfoliated by the leaves of $W^{ws}_F$ and $W^{wu}_F$; this fact can be established by comparison of divergence rates on the universal cover.\footnote{In fact, if $\varphi$ is sufficiently $C^1$-small, we can also appeal to the general Hirsch-Pugh-Shub machinery: since the center distribution $E^{ws}_0\oplus E^{wu}_0$ of $L_0$ is smooth, by plaque-expansiveness,~\cite{HirschPughShub} guarantees that $L_\varphi$ as well as its perturbations are dynamically coherent; in particular,  $E^{ws}_F\oplus E^{wu}_F$ is uniquely integrable, and by intersecting the leaves of the resulting foliation with the leaves of $W_F^s$ and $W_F^u$ respectively, we obtain foliations integrating $E^{ws}_F$ and $E^{wu}_F$. }

We will need the above foliations to persist through an isospectral deformation which could, \textit{a priori}, leave the neighborhood $\cU$. First, by choosing $\cU$ even smaller if needed, we can guarantee that $F\in\cU$ has narrow periodic data (that is, the Lyapunov exponents at periodic points of $F$ are sufficiently close to the Lyapunov exponents of $L_\phi$). If a diffeomorphism $G$ is conjugate to $F$ and has the same periodic data (such as a diffeomorphism from an isospectral deformation of $F$) then $G$ also admits a dominated splitting by the main result of~\cite{DWG}. 
\begin{lemma} \label{lemma1}
    Let $F$ and $G$ be as above. Then the weak stable distribution $E^{ws}_G$ and weak unstable distribution $E^{wu}_G$ are uniquely integrable. 
\end{lemma}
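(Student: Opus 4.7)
The plan is to prove unique integrability of $E^{wu}_G$; the corresponding statement for $E^{ws}_G$ then follows by applying the same argument to the Anosov diffeomorphism $G^{-1}$, which is conjugate to $F^{-1}$ with matching periodic data.

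First, I would reduce the question to a $2$-dimensional problem inside each unstable leaf of $G$. By classical Anosov theory, the full unstable bundle $E^u_G=E^{uu}_G\oplus E^{wu}_G$ integrates uniquely to the unstable foliation $W^u_G$, whose leaves are smoothly immersed copies of $\mathbb{R}^2$. Any integral curve of $E^{wu}_G$ is forced to lie in some leaf $W^u_G(x)$, so it suffices to establish unique integrability of the restriction $E^{wu}_G|_{W^u_G(x)}$ inside each $W^u_G(x)\cong \mathbb{R}^2$. In such a leaf, $G^{-1}$ acts as a smooth contraction of $\mathbb{R}^2$ carrying a continuous dominated splitting $E^{uu}_G\oplus E^{wu}_G$, in which $E^{wu}_G$ is the slow contracting direction.

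Next, I would transport the weak unstable foliation from $F$ to $G$ using the topological conjugacy $h\colon \mathbb{T}^4\to\mathbb{T}^4$ with $h\circ F=G\circ h$. Because conjugacies between Anosov diffeomorphisms preserve full (un)stable leaves (dynamically characterized by uniform forward/backward asymptotics), the image $h(W^{wu}_F(y))$ is a continuous curve contained in $W^u_G(h(y))$, and these images form a $G$-invariant topological foliation of each unstable leaf of $G$. This produces a candidate integral curve of $E^{wu}_G$ through every point, provided tangency can be established.

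The key remaining step, which I expect to be the main obstacle, is to promote this topological foliation to a $C^1$ foliation tangent to $E^{wu}_G$. My plan is to characterize leaves dynamically via a slow-rate asymptotics inside $W^u_G(x)$: a point $z\in W^u_G(h(y))$ lies on $h(W^{wu}_F(y))$ precisely when the backward iterates of $z$ contract onto the orbit of $h(y)$ at the slow exponential rate dictated by $E^{wu}_G$, rather than at the strictly faster rate associated with $E^{uu}_G$. Using the narrow isospectral periodic data, density of periodic orbits, and continuity of the dominated splitting, the slow rate along the transported curves matches the one prescribed by $E^{wu}_G$ on a dense set; a cone/graph-transform argument based on the domination inside $W^u_G(x)$ then upgrades each such curve to a $C^1$ graph tangent to $E^{wu}_G$. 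Uniqueness follows because any other integral curve of $E^{wu}_G$ through the same point satisfies the same dynamical characterization and must therefore coincide with the transported curve.
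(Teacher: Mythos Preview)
Your overall structure --- reduce to the unstable leaf, use rates to distinguish weak from strong, transfer via the conjugacy --- is sound, but the argument is organized in the hard direction, and the step you flag as the ``main obstacle'' is a genuine gap. You propose to push the foliation $W^{wu}_F$ forward by $h$ and then \emph{upgrade} the resulting continuous curves to $C^1$ curves tangent to $E^{wu}_G$ via ``a cone/graph-transform argument based on the domination.'' Graph transforms \emph{construct} invariant $C^1$ manifolds; they do not raise the regularity of a given topological curve. Since $h$ is only H\"older, $h(W^{wu}_F(y))$ is a priori just a Jordan arc, and nothing in your sketch explains why it should be differentiable, let alone tangent to $E^{wu}_G$. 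Relatedly, your rate-transfer step is delicate: a merely H\"older conjugacy distorts exponential rates by its H\"older exponent, so ``slow rate for $F$'' does not immediately yield ``slow rate for $G$'' with the correct exponent.

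The paper avoids both problems by reversing the direction of the argument and by passing to the linear model $L_\phi$ rather than to $F$. One starts from an arbitrary integral curve $\gamma^{wu}$ of $E^{wu}_G$ (which is $C^1$ by definition, so no regularity upgrade is needed) and shows that its image under the conjugacy $h_G$ to $L_\phi$ lies inside a single leaf of $W^{wu}_\phi$. The rate comparison works because: (i) $\mathrm{length}(G^n(\gamma^{wu}))\le (\mu^{-1}+\eps)^n\,\mathrm{length}(\gamma^{wu})$ from the $E^{wu}_G$-bound; (ii) the lift of $h_G$ to $\mathbb{R}^4$ is a bounded distance from the identity (this is the key ingredient replacing H\"older control and the one you are missing); (iii) in $L_\phi$, two points on the same unstable leaf but on different weak unstable leaves separate like $\lambda^{-n}\gg(\mu^{-1}+\eps)^n$. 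Hence if $E^{wu}_G$ failed to be uniquely integrable, two branching integral curves through the same point would both map into the same one-dimensional leaf of $W^{wu}_\phi$, contradicting injectivity of $h_G$. This is short and requires no $C^1$ information about the transported curves; I recommend reorganizing your proof along these lines.
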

\begin{lemma} \label{lemma2}
    Let $F$ and $G$ be as above and let $h$ be the conjugacy, $h\circ F=G\circ h$. Then $h(W_F^{ws})=W_G^{ws}$ and $h(W_F^{wu})=W_G^{wu}$. 
\end{lemma}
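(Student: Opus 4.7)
The plan is to realize each weak (un)stable foliation as the intersection of the center foliation with the full (un)stable foliation, and to show that the topological conjugacy $h$ respects both ingredients. Since $W^s_F(x)=\{y\in\T^4:d(F^n(x),F^n(y))\to 0\}$ is topologically characterized (and similarly for $G$), the intertwining $h\circ F^n=G^n\circ h$ immediately gives $h(W^s_F)=W^s_G$ and $h(W^u_F)=W^u_G$.

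The main step is to show that $h$ sends the center foliation of $F$ to a center foliation of $G$. First I would promote the unique integrability given by Lemma~\ref{lemma1} to joint integrability of $E^{ws}_G\oplus E^{wu}_G$: the divergence-rate comparison on the universal cover sketched just before Lemma~\ref{lemma1} for $F\in\cU$ relies only on the dominated splitting (available for $G$ by~\cite{DWG}), on the separate unique integrability of the weak subbundles (Lemma~\ref{lemma1}), and on the rate inequalities at periodic orbits ($G$ shares the periodic data of $F$); the same argument then produces a center foliation $W^c_G$ of $G$ by $2$-tori. To show $h(W^c_F)=W^c_G$, I would lift $h$ to a homeomorphism $\tilde h$ of $\R^4$ conjugating chosen lifts $\tilde F$ and $\tilde G$. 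Because $F$ and $G$ are Anosov diffeomorphisms of $\T^4$ in the same homotopy class, $\tilde h$ is at bounded distance from a linear map. The lifts of leaves of $W^c_F$ are $2$-planes projecting homeomorphically onto the base $\T^2$-factor, so their images under $\tilde h$ form a deck-equivariant family of $2$-dimensional topological submanifolds permuted by $\tilde G$; by the uniqueness of such a $\T^2$-fibration compatible with the dominated splitting of $G$ (again an outcome of the divergence-rate comparison), these images must coincide with the lifts of $W^c_G$.

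Finally, on each stable leaf of $F$, $W^{ws}_F(x)$ is the connected component through $x$ of $W^c_F(x)\cap W^s_F(x)$ (and similarly for the unstable direction), so
\[
h(W^{ws}_F(x))=h(W^c_F(x))\cap h(W^s_F(x))=W^c_G(h(x))\cap W^s_G(h(x))=W^{ws}_G(h(x)),
\]
and analogously $h(W^{wu}_F)=W^{wu}_G$. The main obstacle I foresee is establishing joint integrability and topological uniqueness of the center foliation for $G$: since $G$ is not a priori $C^1$-close to $L_\phi$, Hirsch--Pugh--Shub plaque-expansiveness does not apply directly, and one must run the universal-cover divergence-rate comparison intrinsically for $G$, exploiting the matching periodic data (and the isotopy-class rigidity transported through $\tilde h$) to control the required rate bounds.
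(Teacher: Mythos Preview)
Your plan can be made to work, but it takes a detour that the paper avoids entirely. The paper does not pass through the center foliation at all: it uses $L_\phi$ as a common reference. Both $F$ and $G$ come with conjugacies $h_F$, $h_G$ to $L_\phi$ (so that $h=h_G^{-1}\circ h_F$), and the three rate observations already recorded in the proof of Lemma~\ref{lemma1} show directly that every integral curve of $E^{wu}_G$ is carried by $h_G$ into a leaf of $W^{wu}_\phi$; hence $h_G(W^{wu}_G)=W^{wu}_\phi$, and by the same token $h_F(W^{wu}_F)=W^{wu}_\phi$. Composing gives $h(W^{wu}_F)=h_G^{-1}\circ h_F(W^{wu}_F)=W^{wu}_G$ in one line, with no need to construct or characterize a center foliation for $G$.

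Your route, by contrast, must first build $W^c_G$ and then argue $h(W^c_F)=W^c_G$. The second step is exactly where your sketch is thinnest, as you yourself flag: the images $h(W^c_F(x))$ are only topological $2$-tori, so ``compatible with the dominated splitting of $G$'' has no immediate meaning, and the ``uniqueness of such a $\T^2$-fibration'' you invoke is not a ready-made statement about $C^0$ fibrations. To make that step rigorous you would be forced back to a growth-rate comparison for these tori under iteration of $G$ --- precisely the mechanism the paper uses, but the paper applies it one weak foliation at a time and against the explicit linear structure of $L_\phi$, which is what keeps the argument short. Factoring through the linear hub is the idea you are missing.
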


The proofs of these lemmata are quite standard and we only briefly indicate the argument. For more details the reader could consult~\cite{GG}, where similar arguments are carried out in detail. In the following, we abbreviate as $W_\varphi^*:=W_{L_\varphi}^*$, $*\in \{s,ss,ws,wu,uu,u\}$, the invariant foliations of the de la Llave diffeomorphism $L_\varphi$. 

By the main result of~\cite{DWG} the diffeomorphism $G$ also admits an invariant dominated splitting $E^{ss}_G\oplus E^{ws}_G\oplus E^{wu}_G\oplus E^{uu}_G$. Moreover, the exponential contraction and expansion rates along these subbundles are confined to small intervals $(\lambda-\eps,\lambda+\eps)$, $(\mu-\eps,\mu+\eps)$, $(\mu^{-1}-\eps,\mu^{-1}+\eps)$ and $(\lambda^{-1}-\eps,\lambda^{-1}+\eps)$, respectively.

Denote by $h_G$ the conjugacy to $L_\phi$, $h_G\circ L_\phi=G\circ h_G$. If $E^{wu}_G$ does not integrate uniquely it still has one-dimensional integral curves $\gamma^{wu}$ with $\dot\gamma^{wu}\in E^{wu}_G$ inside the leaves $W^u_G$ because $E^{wu}_G$ is a one-dimensional distribution. For any such curve $\gamma^{wu}$ through a point $x$ we have $h_G(\gamma^{wu})\subset W^{wu}_{\phi}(h_G(x))$. This fact follows easily from the following observations:
\begin{enumerate}
\item $\mathrm{length}(G^n(\gamma^{wu}))\le (\mu^{-1}+\eps)^n \mathrm{length}(\gamma^{wu})$;
\item the lift of the conjugacy $h_G$ to $\mathbb R^4$ is a bounded distance away from $\mathrm{id}_{\R^4}$;
\item if $p\in W^{u}_\phi(q)$ and $p\notin W^{wu}_\phi(q)$ then $d_{W^u_\phi}(L_\phi^n(p),L_\phi^n(q))\asymp \lambda^{-n}$, $n\to\infty$.
\end{enumerate}

If $E^{wu}_G$ is not uniquely integrable then it admits branching integral curves $\gamma^{wu}$ through a point $x$ all of which must collapse to the same leaf of $W^{wu}_\phi$: $h_G(\gamma^{wu})\subset W^{wu}_\phi(x)$, contradicting the fact that $h_G$ is a homeomorphism. This proves Lemma~\ref{lemma1} (an analogous argument by iterating backward proves the unique integrability of $E^{ws}_G$). 

To establish Lemma~\ref{lemma2} notice that we proved that $h_G(W^{wu}_G)=W^{wu}_\phi$. The same argument also gives $h_F(W^{wu}_F)=W^{wu}_\phi$, where $h_F$ is the conjugacy between $F$ and $L_\phi$. Hence, $h(W^{wu}_F)=h_G^{-1}\circ h_F(W^{wu}_F)=h_G^{-1}(W^{wu}_\phi)=W^{wu}_G$.

\subsection{Geometric mechanism of de la Llave examples}\label{subs_geometric_mechanism}
Observe that if two Anosov diffeomorphisms are $C^1$ conjugate then the differential of the conjugacy maps the fine dominated splitting of the first diffeomorphism to a fine dominated splitting of the second diffeomorphism. Consequently, the conjugacy maps the ``web'' of invariant foliations of the first diffeomorphisms to that of the second.

Accordingly, if one would like to establish $C^1$ regularity of the conjugacy it is natural to establish matching of the ``webs'' of foliations first. In fact, in the context of de la Llave examples, matching of the strong foliations is the main step to obtain rigidity.

\begin{lemma}\label{lemma3}
    Let $L_\phi$ be a de la Llave diffeomorphism. Then there exists a $C^1$-neighborhood $\cU\ni L_\phi$ such that if $F\in\cU$ and $G$ is conjugate to $F$ via $h$ with the same periodic data and such that $h(W_F^{ss})=W_G^{ss}$, $h(W_F^{uu})=W_G^{uu}$, then $h$ is $C^{1+\textup{H}}$ regular.
\end{lemma}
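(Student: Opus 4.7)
The plan is to prove $C^{1+\textup{H}}$ regularity of $h$ along each of the four one-dimensional invariant subfoliations $W^{ss}_F$, $W^{ws}_F$, $W^{wu}_F$, $W^{uu}_F$ via a Livshits-type argument, and then to upgrade to global regularity by two applications of Journé's lemma. The hypothesis directly provides $h(W^{ss}_F)=W^{ss}_G$ and $h(W^{uu}_F)=W^{uu}_G$, while Lemma~\ref{lemma2} applies in this setting (since $G$ has the same periodic data as $F\in\cU$ and hence inherits the relevant dominated splitting via~\cite{DWG}), giving $h(W^{ws}_F)=W^{ws}_G$ and $h(W^{wu}_F)=W^{wu}_G$; so the full web of 1D subfoliations is preserved by $h$.

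For the Livshits step along, say, $W^{ws}_F$, the log-derivative cocycle $\phi_F(x):=\log|DF(x)|_{E^{ws}_F}|$ is Hölder continuous (as $E^{ws}_F$ is Hölder), and similarly for $\phi_G$. Isospectrality forces the periodic sums of $\phi_F-\phi_G\circ h$ over $F$ to vanish, so Livshits' theorem yields a Hölder function $u^{ws}\colon\T^4\to\R$ cohomologous to $\phi_F-\phi_G\circ h$. Exponentiating $u^{ws}$ and integrating along $W^{ws}_F$-leaves produces a candidate leafwise $C^{1+\textup{H}}$ conjugacy between $F|_{W^{ws}_F}$ and $G|_{W^{ws}_G}$, which must coincide with $h|_{W^{ws}_F}$ by uniqueness of topological conjugacies between 1D uniform contractions (with a suitable normalization along each orbit). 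The analogous argument applied to $W^{ss}_F$, and time-reversed on $W^{uu}_F$ and $W^{wu}_F$, will yield uniform leafwise $C^{1+\textup{H}}$ regularity of $h$ along all four 1D subfoliations.

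Two applications of Journé's lemma then finish the proof. Inside each 2D stable leaf $W^s_F(x)$, the transverse 1D subfoliations $W^{ss}_F$ and $W^{ws}_F$ have $C^\infty$ leaves and $h$ is uniformly $C^{1+\textup{H}}$ along each, so Journé's lemma gives $h|_{W^s_F(x)}\colon W^s_F(x)\to W^s_G(h(x))$ is $C^{1+\textup{H}}$; the symmetric argument on unstable leaves gives $h|_{W^u_F(x)}\in C^{1+\textup{H}}$. A final application of Journé's lemma on $\T^4$ to the transverse foliations $W^s_F$ and $W^u_F$ then upgrades $h$ itself to a $C^{1+\textup{H}}$ diffeomorphism.

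The most delicate point in the plan is the Livshits step along the weak foliations. The de la Llave examples (Subsection~\ref{subs_DeLaLlave}) demonstrate that matching all periodic eigenvalues alone does not force $C^{1+\textup{H}}$ regularity, and the obstruction is precisely the possible failure of $h$ to preserve strong foliations. The hypothesis $h(W^{ss}_F)=W^{ss}_G$ and $h(W^{uu}_F)=W^{uu}_G$ is exactly what unlocks the Livshits argument here: it guarantees preservation of the full invariant web, which in turn forces the Hölder Livshits transfer function on each 1D subfoliation to coincide with the leafwise derivative of $h$. The uniformity of leafwise Hölder norms required by Journé's lemma is automatic from the global Hölder regularity of the transfer functions produced by Livshits' theorem.
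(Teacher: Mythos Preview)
Your overall architecture matches the paper's: preservation of all four one-dimensional foliations, Livshits on the leafwise expansion cocycles, leafwise $C^{1+\textup{H}}$ regularity of $h$, then two rounds of Journ\'e. Two points, however, are genuine gaps rather than informalities.

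\textbf{The uniqueness argument does not close.} You write that the candidate smooth map ``must coincide with $h|_{W^{ws}_F}$ by uniqueness of topological conjugacies between 1D uniform contractions (with a suitable normalization).'' But leafwise, these are \emph{non-stationary} families of contractions $W^{ws}_F(x)\to W^{ws}_F(F(x))$, and such conjugacies are wildly non-unique: any orientation-preserving homeomorphism fixing $0$ conjugates $t\mapsto \mu t$ to itself. Writing $h_x:=\Psi^G_{h(x)}\circ h\circ (\Psi^F_x)^{-1}$ in the linearizing normal forms and $K_x(t):=e^{-u(x)}h_x(t)/t$, the cocycle relation gives $K_{F(x)}(\mu_F(x)t)=K_x(t)$, but nothing forces $K_x(t)$ to have a limit as $t\to 0$ unless you already know $h_x$ is differentiable at $0$. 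The paper (following~\cite{Go,GG}) does not claim any such uniqueness; instead it first proves $h$ is \emph{uniformly Lipschitz} along leaves using the densities $\rho_F^*$, and then upgrades Lipschitz to $C^{1+\textup{H}}$ via a Pesin--Sinai type construction of an $F$-invariant measure with absolutely continuous conditionals along $W_F^*$, using that transitive points have full measure. This measure-theoretic step is exactly what your ``uniqueness'' shortcut is attempting to replace, and it cannot be skipped.

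\textbf{Weak leaves are not $C^\infty$.} Your Journ\'e step inside $W^s_F(x)$ asserts that $W^{ss}_F$ and $W^{ws}_F$ ``have $C^\infty$ leaves.'' This is false for the weak foliation: $E^{ws}_F$ is only H\"older, so the leaves of $W^{ws}_F$ are in general only $C^{1+\textup{H}}$ (the paper states this explicitly, and it is the reason the normal forms $\Psi^{ws}_{F,x}$ are merely $C^{1+\textup{H}}$). Journ\'e's lemma still applies with $C^{1+\textup{H}}$ leaves, so the conclusion survives, but you should state and use the correct regularity.
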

For a detailed proof we refer to~\cite[Theorem~D]{Go}. The setting in~\cite{Go} is more restrictive (only a neighborhood of the linear automorphism is considered), however, given Lemma~\ref{lemma2} the same proof as in~\cite{Go} gives Lemma~\ref{lemma3}. 
Let us simply recall the main steps in the proof of such result:
\begin{itemize}
    \item for $*\in \{ss,ws,wu,uu\}$, there exist affine structures on the foliation $W_F^*$, and accordingly for the diffeomorphism $G$; more precisely, there exists a continuous family $\{\Psi_{F,x}^*\}_{x \in M}$ of one-dimensional normal forms which linearize the dynamics along the leaves of $W_F^*$, i.e., for each $x \in M$, $\Psi_{F,x}^*\colon W_F^*(x)\to \mathbb{R}$ is a $C^{1+\textup{H}}$ diffeomorphism\footnote{In fact, $\Psi_{F,x}^*$ is smooth for $*\in \{ss,uu\}$; the lack of smoothness of $\Psi_{F,x}^*$ for $*\in \{ws,wu\}$ is due to the lack of regularity of the leaves of $W_F^*$, which are only $C^{1+\textup{H}}$ regular in general.} such that 
    \begin{equation*}
		\Psi_{F,x}^*\circ F=\|DF(x)|_{E_F^*}\|\cdot\Psi_{F,F(x)};
	\end{equation*}
    in fact, for $*\in \{wu,uu\}$, $\Psi_{F,x}^*$ is defined as (the expression for $*\in \{ss,ws\}$ is similar, reversing future and past)
    \begin{equation*}
	\Psi_{F,x}^*(y):=\int_{x}^{y} \rho_F^*(x,z)\, dm_{W_F^*(x)}(z),\quad \forall\, y \in W_F^*(x),
\end{equation*} 
where the above integral is taken over the piece of $W_F^*(x)$ from $x$ to $y$, $dm_{W_F^*(x)}$ being the induced volume on the leaf $W_F^*(x)$, and where
$$
\rho_F^*(x,z):=\prod_{k=1}^{+\infty} \frac{\|DF(F^{-k}(x))|_{E_F^*}\|}{\|DF(F^{-k}(z))|_{E_F^*}\|},\quad \forall\, z \in W_F^*(x);
$$
\item if $F,G$ are isospectral, then by Livshits theorem, the functions $\|DF|_{E_F^*}\|$ and $\|DG\circ h|_{E_F^*}\|$ are cohomologous, i.e., for some Hölder continuous function $\gamma^*\colon M \to \mathbb{R}$, we have 
$$
\|DG(h(x))|_{E_F^*}\|=e^{\gamma^*(F(x))-\gamma^*(x)}\|DF(x)|_{E_F^*}\|,\quad \forall\, x \in M;
$$ 
if, moreover, the conjugacy $h$ sends $W_F^*$ to $W_G^*$ (it is always true for $*\in \{ws,wu\}$), then for any $x \in M$, $z\in W_F^*(x)$, we have  
$$
\rho_G^*(h(x),h(z))=e^{\gamma^*(x)-\gamma^*(z)}\rho_F^*(x,z);
$$
then, using the affine structures on $W_F^*$, it is possible to show that 
the restriction of $h$ to the leaves of $W_F^*$ is uniformly Lipschitz continuous;
\item following the construction of Pesin-Sinai~\cite{PesinSinai},~\cite{GG}-\cite{Go} construct an $F$-invariant measure $\mu$ whose conditionals along the leaves of $W_F^*$ are absolutely continuous with respect to the induced volume, where for $\mu$-a.e. $x \in M$, the density along $W_F^*(x)$ is proportional to $\rho_F^*(x,\cdot)$; although it is not clear that such measure $\mu$ is ergodic, it is shown in~\cite{GG}-\cite{Go} that the set of transitive points has full $\mu$-measure, from which it is possible to upgrade the regularity of $h|_{W_F^*(x)}$ from Lipschitz to
$C^{1+\textup{H}}$;
\item for $*\in \{s,u\}$, the conjugacy $h$ is $C^{1+\textup{H}}$ along the leaves of the two transverse subfoliations of $W_F^{c*}$, hence by Journé's regularity lemma, $h$ is $C^{1+\textup{H}}$ along the leaves of $W_F^*$; since the two foliations $W_F^s$ and $W_F^u$ are transverse, again by Journé's lemma, we conclude that $h$ is $C^{1+\textup{H}}$. 
\end{itemize}

Let $p=F^n(p)$ be a periodic point of $F\in\cU$ and let $\hat\mu_p<\mu_p<\lambda_p<\hat\lambda_p$. We say that $p$ is {\it center-expanding} if $\mu_p\lambda_p>1$ and {\it center-contracting} if $\mu_p\lambda_p<1$. We proceed to state our main technical result which then would supply the hypothesis for Lemma~\ref{lemma3}, which would, in turn, imply the main theorem. 

Given a periodic point $p_0$ we denote by $p_s=F^n_s(p_s)=h_s(p_0)$ its continuation along the family. Also $p_0$ admits a unique continuation to the neighborhood $\cU$ and we will write $p_0^G$ for the corresponding periodic point of $G\in\cU$.

\begin{proposition}\label{prop_main} 
     If $\bar F\in\cU$
    and $p_0$ is a center-expanding periodic point then there exist an arbitrarily $C^\infty$-small perturbation $\hat F$ of $\bar F$ and a $C^2$-open neighborhood $\cW$, $\hat F\in\cW\subset\cU$ such that for all $G\in\cW$ and all isospectral deformations $\{G_s\}_{s \in [0,1]}$ of $G$ we have $h_s(W^{ss}_{G,\mathrm{loc}}(p^G_0))=W^{ss}_{G_s, \mathrm{loc}}(p^G_s)$ for all $s\in[0,1]$.
\end{proposition}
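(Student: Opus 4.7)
The plan is to lift the problem to the $5$-dimensional suspension flows and recover the strong-stable leaf of $p_0^G$ from period asymptotics in carefully chosen horseshoes. Following Step~(1) of the proof sketch of Theorem~\ref{main_theorem_bis}, I suspend $G$ and each $G_s$ with roof function equal to the logarithmic full Jacobian plus a positive constant; the resulting $5$-dimensional Anosov flows $X^t$ and $Y_s^t$ share the same period spectrum at corresponding orbits by isospectrality of $\{G_s\}$ combined with Livshits' theorem. Hence $h_s$ lifts to a continuous flow conjugacy $H_s$ with $H_0=\mathrm{id}$, and the desired equality of strong-stable leaves at the diffeomorphism level translates into the analogous equality at the flow level.

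Next, I fix a transverse homoclinic point $q$ to the $X^t$-closed orbit through $p_0^G$, with $q$ lying in the $2$-dimensional unstable manifold of that orbit. The horseshoe it generates contains, for every large $n$, a periodic point $p_n(q)$ whose orbit shadows $n$ turns around $p_0^G$ followed by one homoclinic excursion through $q$. Proposition~\ref{prop_trois_un} provides an asymptotic expansion for the flow-period of $p_n(q)$ of the form
$$
\mathrm{per}_X(p_n(q)) = n\,\mathrm{per}_X(p_0^G) + c_0 + c_1(q)\,\sigma^n + \text{(lower order)},\qquad n\to+\infty,
$$
where $\sigma\in(0,1)$ is a stable modulus of $p_0^G$ and $c_1(q)$ is a geometric coefficient depending on $q$ via the templates of $X^t$ and the normal forms along the invariant subbundles. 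The center-expanding hypothesis $\mu_{p_0}\lambda_{p_0}>1$ is what makes the term $c_1(q)\sigma^n$ the genuine leading correction, cleanly extractable from the expansion. As $q$ varies over a dense family of transverse homoclinic points in $W^u_X \cap W^s_X$, the collection $\{c_1(q)\}_q$ encodes, and conversely determines completely, the position of $W^{ss}_X$ inside $W^s_X$ at $p_0^G$.

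Matching flow-periods under $H_s$, namely $\mathrm{per}_{Y_s}(H_s(p_n(q))) = \mathrm{per}_X(p_n(q))$, and applying the same expansion to $Y_s^t$ with horseshoe generated by $H_s(q)$, forces $c_1^{(s)}(H_s(q)) = c_1(q)$ for every such $q$. Since the collection of coefficients determines the strong-stable leaf on each side, this yields $H_s(W^{ss}_X)=W^{ss}_{Y_s}$ at the corresponding periodic orbit, and projecting back through the suspensions gives the desired equality $h_s(W^{ss}_{G,\mathrm{loc}}(p_0^G)) = W^{ss}_{G_s,\mathrm{loc}}(p_s^G)$. The perturbation $\hat F$ of $\bar F$ is invoked to secure the non-vanishing of a template quantity appearing in the definition of $c_1$ (the genericity condition from Proposition~\ref{prop_trois_quatre}); this non-vanishing is $C^2$-open and achievable by a $C^\infty$-small change of $\bar F$, producing the neighborhood $\cW$.

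The main obstacle will be the extraction step — converting identities between period asymptotics into a geometric identity of strong-stable leaves. Two features make this delicate. First, the coefficient $c_1(q)$ must be shown to be a well-defined geometric invariant of the position of $q$ relative to $W^{ss}$, which requires a careful computation in affine normal forms along the four invariant subbundles, complicated by the fact that the weak foliations are only $C^{1+\textup{H}}$ regular. Second, the recovery of the strong-stable leaf from $\{c_1(q)\}_q$ relies crucially on the non-degeneracy provided by Proposition~\ref{prop_trois_quatre}, and the perturbation $\hat F$ must be designed to both achieve this non-degeneracy and preserve the center-expanding nature of $p_0$ while keeping the new diffeomorphism inside $\cU$.
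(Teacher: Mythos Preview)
Your overall strategy matches the paper's: pass to suspension flows, expand periods of shadowing orbits as in Proposition~\ref{prop_trois_un}, use the perturbation from Proposition~\ref{prop_trois_quatre} to force non-vanishing of the template quantity, and then extract matching of strong-stable leaves from matching of leading coefficients. However, the extraction step as you describe it has a real gap.

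You write that the collection $\{c_1(q)\}_q$ ``determines completely the position of $W^{ss}$ inside $W^s$'' and that equality $c_1^{(s)}(H_s(q))=c_1(q)$ therefore yields $H_s(W^{ss}_G)=W^{ss}_{G_s}$. Recall that $c_1(q)=\xi_\infty\cdot\zeta_p(q)$, where $\xi_\infty$ is the weak-stable coordinate of $q'$ and $\zeta_p(q)$ is the template quantity. To read off $W^{ss}$ (i.e.\ the locus $\xi_\infty=0$) from $c_1$, you need $\zeta_p(q)\neq 0$. The perturbation $\hat F$ and the $C^2$-open set $\cW$ only guarantee this for $G=G_0$; the deformation $\{G_s\}$ is \emph{not} required to stay in $\cW$, or even in $\cU$ (this is exactly the point of the remark following Theorem~\ref{main_theorem_bis}). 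For $G_s$ with $s$ away from $0$ you have no control over $\zeta^{(s)}$, so from $c_1^{(s)}=0$ you cannot conclude $\xi_\infty^{(s)}=0$. Your implication runs only one way: $\xi_\infty\neq 0\Rightarrow c_1\neq 0\Rightarrow c_1^{(s)}\neq 0\Rightarrow \xi_\infty^{(s)}\neq 0$; the reverse direction, needed to pin down $W^{ss}_{G_s}$, is missing.

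The paper closes this gap by an intermediate value argument on the parameter~$s$ (Proposition~\ref{prop_main_tech_match}). Assuming $H_{s_0}(W^{ss}_G)\neq W^{ss}_{G_{s_0}}$, one chooses a homoclinic pair $(q,q')$ with $q$ in the ball where $\zeta^{(0)}\neq 0$ and with the weak-stable coordinate $\xi_\infty^\circ(s)$ of $H_s(q')$ changing sign between $s=0$ and $s=s_0$. Continuity of $s\mapsto\xi_\infty^\circ(s)$ (which requires working in \emph{coarse} charts $\hat\Phi_{p_s}$ that vary continuously in $C^1$, not in linearizing charts, cf.\ Section~\ref{coarse_charts} and Remark~\ref{remark_coarse}) then produces $s'\in(0,s_0)$ with $\xi_\infty^\circ(s')=0$. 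At this parameter $\hat\omega^{(s')}=\xi_\infty^\circ(s')\cdot\hat\zeta^{(s')}=0$ \emph{regardless} of whether $\hat\zeta^{(s')}$ vanishes, while $\hat\omega^{(0)}\neq 0$; matching of periods forces these to have the same sign (Corollary~\ref{coro_coarse}), a contradiction. This trick lets the argument use non-vanishing only at $s=0$, which is precisely what the construction of $\cW$ provides. Your writeup would need both this IVT mechanism and the introduction of the coarse charts to become a complete proof.
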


\begin{remark}\label{remark_main}
    By reversing the time we also have the symmetric statement for center-contracting periodic point $q_0$ with the conclusion $h_s(W^{uu}_{G,\mathrm{loc}}(p^G_0))=W^{uu}_{G_s, \mathrm{loc}}(p^G_s)$. 
\end{remark}

We have the following perturbative counterpart of Proposition~\ref{prop_main}:
\begin{proposition}\label{prop_main_tilda} 
     If $\bar F\in\cU$
    and $p_0$ is a center-expanding periodic point then there exist an arbitrarily $C^\infty$-small perturbation $\hat F$ of $\bar F$ and a $C^2$-open neighborhood $\cW$, $\hat F\in\cW\subset\cU$ such that for all $G\in\cW$ and any isospectral diffeomorphism $G_1$ which is sufficiently $C^2$-close to $G$, the conjugacy $h$ between them satisfies  $h(W^{ss}_{G,\mathrm{loc}}(p^G))=W^{ss}_{G_1, \mathrm{loc}}(p^{G_1})$. 
\end{proposition}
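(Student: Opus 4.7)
\medskip

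\noindent\textbf{Proof proposal.} The plan is to run the flow-level scheme outlined in the introduction at a single pair of isospectral diffeomorphisms rather than along a one-parameter family. First I would choose the perturbation $\hat F$ inside an arbitrarily $C^\infty$-small neighborhood of $\bar F$ so that the continuation of $p_0$ remains center-expanding and, crucially, so that a suitable non-degeneracy of templates, in the spirit of the genericity requirement entering the definition of $\cV$, holds at a countable family of transverse homoclinic loops based at $p_0^{\hat F}$. Since both conditions are $C^2$-open, they persist on a $C^2$-open neighborhood $\cW \ni \hat F$, $\cW \subset \cU$. Fix $G \in \cW$, let $p := p_0^G$ be its unique continuation, and let $G_1$ be $C^2$-close and isospectral to $G$ with conjugacy $h$.

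Next I would pass to suspensions: let $X^t$ (resp.\ $Y^t$) be the suspension flow over $G$ (resp.\ $G_1$) with roof function equal to the logarithmic full Jacobian plus a large constant, so that the suspension is a smooth $5$-dimensional Anosov flow. Isospectrality of $(G,G_1)$ translates, by construction of the roof, into matching of periods of every pair of corresponding periodic orbits of $X^t$ and $Y^t$. Combined with structural stability of $X^t$ (obtained from $C^2$-closeness of $G$ and $G_1$), this upgrades the topological conjugacy induced by $h$ to a time-preserving flow conjugacy $H$, i.e.\ $H \circ X^t = Y^t \circ H$.

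Now I would exploit the center-expanding condition to implement the main novel step described in the introduction. The periodic orbit of $X^t$ through $p$ admits transverse homoclinic points $q \in W^s_X(p) \cap W^u_X(p)$; for each such $q$, the standard horseshoe construction produces a sequence $(p_n)_{n \geq 1}$ of periodic orbits whose itinerary follows $p$ for $n$ periods and then performs one homoclinic excursion through $q$. Applying Proposition~\ref{prop_trois_un} I would obtain an asymptotic expansion of the period $T(p_n)$ whose leading exponential term reads off a coefficient $C(q)$ depending on the position of $q$ within the $2$-dimensional unstable leaf $W^u_X(p)$. The same construction performed at $H(p)$ with homoclinic point $H(q)$ for $Y^t$ produces an analogous coefficient $\widetilde C(q)$. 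Since $H$ carries periodic orbits of $X^t$ to periodic orbits of $Y^t$ with equal periods, matching of the full asymptotic expansions forces $C(q) = \widetilde C(q)$ for every admissible $q$. Letting $q$ range over a sufficiently rich family of homoclinic points, the collection $\{C(q)\}$ has full knowledge of the line $E^{ss}_X(p) \subset T_p W^u_X(p)$, so equality of all coefficients forces $H$ to send $W^{ss}_{X,\mathrm{loc}}(p)$ to $W^{ss}_{Y,\mathrm{loc}}(H(p))$; projecting this identity from the suspension down to the base transversal yields the desired conclusion $h(W^{ss}_{G,\mathrm{loc}}(p^G)) = W^{ss}_{G_1,\mathrm{loc}}(p^{G_1})$.

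The main obstacle I anticipate is the ``full knowledge'' step: one has to argue that the countable family of coefficients $\{C(q)\}$ really does determine the position of the strong unstable direction inside the weak unstable one, as opposed to allowing for a spurious second configuration. This is precisely where the template non-vanishing condition secured by the $C^\infty$-small perturbation $\hat F$ and propagated to $\cW$ enters: it ensures that the leading coefficients are generic, so that a single non-trivial equation $C(q) = \widetilde C(q)$ already imposes a codimension-one constraint on the position of the candidate strong stable manifold, and the whole family of such constraints collapses to a unique solution. A secondary technical point is ensuring that the homoclinic points used in the construction, along with their $H$-images, persist under the $C^2$-perturbation $G \leadsto G_1$; this is a routine consequence of hyperbolicity of the horseshoe, but it must be checked to guarantee that the asymptotic comparison of the two sides makes sense.
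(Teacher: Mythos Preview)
Your overall scheme is the paper's: perturb $\bar F$ to $\hat F$ so that the template quantity $\hat\zeta_{p}=\hat{\mathcal T}^{ws}_p-\hat P_p$ is nonzero near a fixed homoclinic parameter, pass to the Jacobian suspensions, match the leading coefficients of the period expansions from Proposition~\ref{prop_trois_un}, and conclude preservation of $W^{ss}$. Two points, however, need correction or sharpening.

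First, a slip: you write ``$E^{ss}_X(p)\subset T_pW^u_X(p)$''. For a center-expanding point the coefficient $C(q)=\xi_\infty\,\zeta_p(q)$ encodes, via $\xi_\infty$, the position of the \emph{stable} hit $q'=X^{T'}(q)$ relative to $W^{ss}_X(p)$ inside the $2$-dimensional \emph{stable} leaf, not anything inside $W^u_X(p)$.

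Second, and more substantive, your ``full knowledge / codimension-one constraints collapse to a unique solution'' argument does not close as stated. The equality you obtain is
\[
\xi_\infty\,\zeta_{p^G}(q)=\xi_\infty^{G_1}\,\zeta_{p^{G_1}}(H(q)),
\]
but $\zeta_{p^G}$ and $\zeta_{p^{G_1}}$ are computed from \emph{different} flows in \emph{different} charts; there is no reason for them to agree, so you cannot deduce $\xi_\infty=\xi_\infty^{G_1}$ or any linear constraint on the position of $W^{ss}$. The paper's way around this is a \emph{sign} argument: because $G_1$ is $C^2$-close to $G$, it too lies in the neighborhood where Proposition~\ref{prop_trois_quatre} applies, so $\hat\zeta_{p^{G_1}}$ has the \emph{same} (nonzero) sign as $\hat\zeta_{p^G}$. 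Now if $H$ failed to preserve $W^{ss}$, one could choose a homoclinic $q$ whose $q'$ lies on one side of $W^{ss}_G(p^G)$ (say $\xi_\infty^\circ<0$) while $H(q')$ lies on the opposite side of $W^{ss}_{G_1}(p^{G_1})$ ($\xi_\infty^{\circ,1}>0$); then the coarse-chart coefficients $\hat\omega$ would have opposite signs, contradicting (via Corollary~\ref{coro_coarse}) the equality of the $\omega$'s forced by period matching. You should replace the vague ``collapses to a unique solution'' by this sign comparison, and make explicit that the template non-vanishing is needed for $G_1$ as well as for $G$.
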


\begin{proof}[Proof of Theorem~\ref{main_theorem} assuming Proposition~\ref{prop_main}] The following lemma is the starting point. 
\begin{lemma}\label{lemma4}
  Any $F\in\cU$  admits arbitrarily $C^\infty$-small perturbation $\bar F$ such that $\bar F$ has a center-expanding periodic point $p_0$ and a center-contracting periodic point, whose eigenvalues satisfy non-resonant conditions~\eqref{eq_resonances}. 
\end{lemma}
The proof is very standard via a localized perturbation in the neighborhood of periodic points and we omit it.

We can apply Proposition~\ref{prop_main} to $\bar F$ and $p_0$ given by the lemma to obtain a perturbation $\hat F$ and a $C^2$-open set $\cW(p_0)\subset\cU$, $\hat F\in\cW(p_0)$, such that for all diffeomorphisms $G\in\cW(p_0)$ and all isospectral deformations $G_s$, $s\in[0,1]$, we have $h_s(W^{ss}_{G,\mathrm{loc}}(p^G_0))=W^{ss}_{G_s,\mathrm{loc}}(p^G_s)$. Further, by taking a smaller perturbation if necessary, we can also assume that $\hat F$ still has a center-contracting periodic point which we denote by $q_0$.\footnote{In fact, the construction of perturbation $\hat F$ is localized and the eigenvalues of the center-expanding periodic point won't change under the perturbation.} We can now apply Proposition~\ref{prop_main} again to $\hat F$ and $q_0$ to obtain a perturbation $\tilde F$ and a $C^2$-open set $\cW(q_0)$, $\tilde F\in\cW(q_0)$ such that for all $G\in\cW(q_0)$ all isospectral deformations $G_s$, $s\in[0,1]$ we have $h_s(W^{uu}_{G, \mathrm{loc}}(q^G_0))=W^{uu}_{G_s, \mathrm{loc}}(q^G_s)$, where $q^G_s$ is the continuation of $q^G_0$. Also, note that by choosing the second perturbation to be sufficiently small we can make sure that $\tilde F\in\cW(p_0)$ so that $\cW(F,\bar F,\hat F,\tilde F):=\cW(p_0)\cap \cW(q_0)$ is a non-empty set. Note that for any $G\in\cW(F,\bar F,\hat F,\tilde F)$ and any isospectral deformation $G_s$ we have both $h_s(W^{uu}_{G, \mathrm{loc}}(q^G_0))=W^{uu}_{G_s, \mathrm{loc}}(q^G_s)$ and $h_s(W^{ss}_{G, \mathrm{loc}}(p^G_0))=W^{ss}_{G_s, \mathrm{loc}}(p^G_s)$.

The open set $\cW(F,\bar F,\hat F,\tilde F)$ is near $F$, but $F$ does not belong to its closure. To fix this problem we do the following. For every $k\ge 1$ we can repeat the same construction while ensuring that $d_{C^\infty}(F, \bar F_k)<1/k$, $d_{C^\infty}(\bar F_k, \hat F_k)<1/k$ and $d_{C^\infty}(\hat F_k, \tilde F_k)<1/k$. Then
$$
\cW(F)=\bigcup_{k\ge 1} \cW(F,\bar F_k,\hat F_k,\tilde F_k)
$$
is a $C^2$-open set which contains $F$ in its $C^\infty$-closure. Finally, we let 
$$
\cV=\bigcup_{F\in\cU}\cW(F).
$$
This is the $C^2$-open and $C^\infty$-dense set posited in Theorem~\ref{main_theorem}. Indeed let $G\in\cV$ and let $G_s$ be an isospectral deformation. Since $G\in\cW(F,\bar F_k,\hat F_k,\tilde F_k)$ for some $F$ we have some periodic points $p_0^G$ and $q_0^G$ such that the local strong stable manifolds at $p_s^G$ and local strong unstable manifolds at $q_s^G$ match under the conjugacies. In fact, since we can iterate both backward and forward and the base-points are periodic we immediately deduce matching of global strong manifolds ---   $h_s(W^{ss}_{G}(p^G_0))=W^{ss}_{G_s}(p^G_s)$ and $h_s(W^{uu}_{G}(q^G_0))=W^{uu}_{G_s}(q^G_s)$. Now by~\cite[Section~4.4]{GG} we also have that $h_s(W^{ss}_{G}(x))=W^{ss}_{G_s}(h_s(x))$ for all $x\in W^s(p_0^G)$. Since  the full stable manifold $W^s(p_0^G)$ is dense in $\T^4$ we deduce that $h_s(W^{ss}_G)=W^{ss}_{G_s}$ and, similarly,  $h_s(W^{uu}_G)=W^{uu}_{G_s}$. 

Finally, we can apply Lemma~\ref{lemma3} and conclude that $h_s$ is $C^{1+\textup{H}}$ regular.
\end{proof}

We now have established Theorem~\ref{main_theorem} modulo the proof of Proposition~\ref{prop_main}. By a very similar argument, we can show Theorem~\ref{main_theorem_bis} modulo the proof of Proposition~\ref{prop_main_tilda}. We proceed with some more preparations and then the proof of Propositions~\ref{prop_main}-\ref{prop_main_tilda} in the next section.

\subsection{Passing to suspension flows} Let $F\in\cU$ and let $F_s$, $s\in[0,1]$, be an isospectral deformation of $F_0=F$.   Denote by $J_s\colon\T^4\to\R$ the (full) Jacobian of $F_s$. We pick a sufficiently large constant $K$ such that $K+\log J_s>0$ for all $s\in[0,1]$, and consider the suspension flows $X^t_s\colon M\to M$ of $F_s$ with the roof function $K+\log J_s$. 
The suspensions flow inherit the dominated splitting structure. That is, for all $s\in[0,1]$, and $X^t=X_s^t$ we have an $DX^t$-invariant splitting $TM=E_X^{ss}\oplus E_X^{ws}\oplus\R X \oplus E_X^{wu}\oplus E_X^{uu}$, and we denote the corresponding foliations  by $W^{*}_X$. We also denote by $E_X^{s}:=E_X^{ss}\oplus E_X^{ws}$ (resp. $E_X^{u}:=E_X^{wu}\oplus E_X^{su}$) the full $2$-dimensional stable (resp. unstable) distribution of the flow $X^t$.  
In the following, we will sometimes drop the index $X$ and write $E^*$, $W^*$ instead of $E_X^*$,  $W_X^*$, $*\in \{ss,ws,wu,uu\}$. 

Since the deformation is isospectral, we have that the sum over the period of $\log J_s$ is independent of $s$ for any periodic point of $F_s$. Hence, by the Livshits theorem, we have that flows $X^t_s$ are conjugate to the initial flow $X^t_0$. We denote the conjugacy by $H_s$ --- $H_s\circ X_0^t=X_s^t\circ H_s, t\in\R$. (The conjugacies $H_s$ are unique up to composition with the flow and we can pick a continuous family of conjugacies $H_s$, $s\in[0,1]$.)




\subsection{Local coordinates and templates}\label{subs_localcoords} We consider the deformation $X_s^t$, $s\in[0,1]$, and a periodic point $p_0=X_0^T(p_0)$  with its continuation $p_s$. Note that the period $T$ is shared by all points $p_s$. Denote by $\hat\mu<\mu<1<\lambda<\hat\lambda$ the eigenvalues of $D_{p_s} X_s^T$. Note that all these numbers are independent of $s\in[0,1]$ since the deformation is assumed to be isospectral. From now on, we will focus on various local considerations in the neighborhood of $p_s$. All constructions that will follow as well as the various choices that will be made can be made continuous in $s$, since the hyperbolic structures  depend on the on the dynamics continuously. In order to make the notation lighter, we will omit the index $s$ and simply write $X^t$ for $X_s^t$, $p$ for $p_s$, etc. Only in the last step of the proof will we reintroduce dependence on $s$ into the notation as the presence of a family becomes important.

Let $\Sigma_p$ be a smooth 4-dimensional local transversal to the flow which contains the local stable manifold $W^s_{\mathrm{loc}}(p)$ and the local unstable manifold $W^u_{\mathrm{loc}}(p)$.
Let us denote by $\Lambda=(\lambda_i)_{i=1}^4:=(\hat \mu,\mu,\lambda,\hat \lambda)\in \mathbb{R}^4$ the vector of eigenvalues;  we can assume that there are no resonances of the form 
\begin{equation}
    \label{eq_resonances}
\lambda_i=\Lambda^\alpha,\quad \forall\, i \in \{1,\cdots,4\}, \quad \forall\,\text{multi-indices }\alpha \in \mathbb{Z}^4,\, |\alpha|\leq 3.
\end{equation}
(Indeed, the resonances at a periodic point can be perturbed away and the absence of resonances is an open property.) Then, by Sternberg linearization theorem, there exists a $C^3$ parametrization 
 $\Phi_p\colon (-1,1)^4\to \Sigma_p$ such that the Poincar\'e return map $\Pi_p$ to $\Sigma_p$ is linearized,
\begin{equation}\label{appli_lin}
\mathcal{L}:=\Phi_p^{-1}\circ \Pi_p\circ \Phi_p\colon(\hat\xi,\xi,\eta,\hat\eta)\mapsto(\hat\mu\hat\xi,\mu\xi,\lambda\eta,\hat\lambda\hat\eta).
\end{equation}
The parametrization $\Phi_p$ can be easily extended to an actual chart $\iota_p\colon(-1,1)^5\to M$ around $p=\iota(0,0,0,0,0)$ such that $\Sigma_p=\iota_p((-1,1)^2 \times \{0\}\times (-1,1)^2)$, and for any $(\hat\xi,\xi,t,\eta,\hat\eta)\in (-1,1)^5$, 
\begin{enumerate}
    \item $\iota_p(\hat\xi,\xi,0, \eta,\hat\eta)=\Phi_p(\hat \xi,\xi,\eta,\hat \eta)$;
  \item $\iota_p(\hat\xi,\xi,t,\eta,\hat\eta)=X^t(\iota_p(\hat\xi,\xi,0,\eta,\hat\eta))$.
\end{enumerate}
   

Note that the local weak and strong stable/unstable manifolds through $p$ are just the coordinate axes in this parametrization.

Using this chart we define two functions, the~\emph{weak stable template} $\mathcal{T}_p^{ws}$ and the~\emph{strong stable template} $\mathcal{T}_p^{ss}$, which represent the angular coordinates of (the image in normal coordinates of) the $2$-plane $E_X^s=E_X^{ss}\oplus E_X^{ws}$ as we move along the local unstable manifold $W^u_{\mathrm{loc}}(p)$: 
\begin{equation}\label{template-def}
\begin{array}{r}
    D\iota_p(0,0,0,\eta,\hat \eta)\big(\textup{Span}(0, 1, \cT_p^{ws}(\eta,\hat\eta), *,*)\big)  \subset E_X^{s}(\Phi_p(0,0,\eta,\hat\eta)),\\
    D\iota_p(0,0,0,\eta,\hat \eta)\big(\textup{Span}(1, 0, \cT_p^{ss}(\eta,\hat\eta), *,*)\big)  \subset E_X^{s}(\Phi_p(0,0,\eta,\hat\eta)).
\end{array}
\end{equation} 
\begin{remark}\label{remarque_templates}
    The subspace $D\iota_p(0,0,0,\eta,\hat \eta)\big(\textup{Span}(0, 1, \cT_p^{ws}(\eta,\hat\eta), *,*)\big)$ is close to $E_X^{ws}(\Phi_p(0,0,\eta,\hat\eta))$ and $D\iota_p(0,0,0,\eta,\hat \eta)\big(\textup{Span}(1, 0, \cT_p^{ss}(\eta,\hat\eta), *,*)\big)$ is close to  $E_X^{ss}(\Phi_p(0,0,\eta,\hat\eta))$, hence the choice of notation. In fact, $\mathcal{T}_p^{ws}$, resp.  $\mathcal{T}_p^{wu}$ can be thought of as the temporal coordinate of the bundle  $E_X^{ws}\oplus E_X^{u}$, resp. $E_X^{s}\oplus E_X^{wu}$, along $W^u_{\mathrm{loc}}(p)$, resp.  $W^s_{\mathrm{loc}}(p)$, in connection with items~\eqref{anomalous_reg1}-\eqref{anomalous_reg2} in Theorem~\ref{claim_improved_thm}.  
\end{remark}
We also denote by $\tau_p\colon \Sigma_p\to \mathbb{R}$ the first return time of the flow $X^t$ on the transversal $\Sigma_p$, specifically, for $x \in \Sigma_p$, $\Pi_p(x)=X^{\tau_p(x)}(x)$. In the following, we identify $\tau_p$ with $\tau_p \circ \Phi_p$. Note that by construction 
\begin{equation}\label{const_time_axes}
\tau_p(\cdot,\cdot,0,0)=\tau_p(0,0,\cdot,\cdot)\equiv T.
\end{equation}

 \subsection{Coarse local coordinates}\label{coarse_charts} While the local sections and charts we have described above are very nice for performing calculations, they do have one drawback: since the linearization procedure depends on higher jets at the fixed point, the sections vary continuously with respect to the flow only in a sufficiently high topology. Meanwhile we have a family of suspension flows which is continuous in $C^1$ topology. Since continuity in parameter $s$
 will play an important role, we will also need to have another family of transversals and charts which would vary continuously in $C^1$ topology with $s$.

 Specifically, we still consider transversals $\Sigma_p$, but impose less rigid conditions on the parametrization $\hat \Phi_p\colon (-1,1)^4\to \Sigma_p$. Namely, we only require the following weaker conditions.
 \begin{enumerate}
 \item $\hat \Phi_p((-1,1)\times(-1,1)\times\{0\}\times\{0\})=W^s_{X,\mathrm{loc}}(p)$; 
  \item\label{item_deux_coarse} $\hat \Phi_p((-1,1)\times\{0\}\times\{0\}\times\{0\})=W^{ss}_{X,\mathrm{loc}}(p)$;
  \item $D\hat\Phi_p(\frac{\partial }{\partial\xi})(p)=E_X^{ws}(p)$;
   \item $\hat \Phi_p(\{0\}\times\{0\}\times(-1,1)\times(-1,1))=W^u_{X,\mathrm{loc}}(p)$; 
   \item $\hat \Phi_p(\{0\}\times\{0\}\times\{0\}\times(-1,1))=W^{uu}_{X\mathrm{loc}}(p)$; 
  \item $D\hat\Phi_p(\frac{\partial }{\partial\eta})(p)=E_X^{wu}(p)$. 
 \end{enumerate}

 We denote by $\hat\Pi_p$ the corresponding Poincar\'e return map, $\hat\Pi_p:=\hat\Phi_p^{-1}\circ \Pi_p\circ\hat\Phi_p$. By the above conditions we have $D\hat\Pi_p(0,0,0,0)=\mathcal L$. We extend the parametrization $\hat \Phi_p$ to an actual local chart $\hat\iota_p$ in exactly the same way as before and we will need the corresponding return time $\hat\tau_p=\tau\circ\hat\Phi_p$. Finally we also define corresponding templates in the exact same way:
\begin{align*}
    D\hat\iota_p(0,0,0,\eta,\hat \eta)\big(\textup{Span}(0, 1, \hat\cT_p^{ws}(\eta,\hat\eta), *,*)\big)&\subset E_X^{s}(\hat \Phi_p(0,0,\eta,\hat\eta)),\\
D\hat\iota_p(0,0,0,\eta,\hat \eta)\big(\textup{Span}(1, 0, \hat\cT_p^{ss}(\eta,\hat\eta), *,*)\big)&\subset E_X^{s}(\hat\Phi_p(0,0,\eta,\hat\eta)).
\end{align*} 

\begin{remark}\label{remark_coarse}
    It will be important that for the $C^1$ family of suspension flows $X^t_s$ (which comes from the $C^2$ family of Anosov diffeomorphisms $F_s$) the coarse charts at $p_s$, $s\in[0,1]$, can be chosen in a continuous manner. Indeed, this is clear because all local invariant manifolds at through $p_s$ vary continuously in $C^1$ topology on the flows and all that is required of the coarse charts $\hat \Phi_{p_s}$ is that they align in certain way with these invariant manifolds.

    In fact the proof simplifies quite a bit for smooth families, since then for the most part the reader can pretend that coarse charts are the same as linearizing charts. Indeed, the main point of coarse charts is that they depend continuously on the parameter and, if the family is smooth, the linearizing charts do depend continuously on the parameter. Still coarse charts are needed for Lemma~\ref{lemma_3.12}.
\end{remark}
 
\subsection{Prescribed families of shadowing periodic orbits}

As above, we consider a center-expanding periodic point $p\in M$, of period $T>0$, with eigenvalues $\hat\mu<\mu<1<\lambda<\hat\lambda$.  

 We fix some homoclinic point $q \in W^u_{\mathrm{loc}}(p)$. We fix a time $T'>0$ with the property that $q'=X^{T'}(q)\in W^s_{\mathrm{loc}}(p)$. Without loss of generality, we can assume that $q,q'\in \Sigma_p$. 
The following shadowing result is standard; see e.g.~\cite[Lemma 4.1]{GLRH} for more details.  
\begin{lemma}\label{lemme_shadow}
	There exist a constant $C_0>0$ and an integer $n_0 \in \mathbb{N}$ such that for $n \geq n_0$, there exists a unique periodic point $p_n\in \Sigma_p$ of period 
	$$
	T_n\simeq nT+T'
	$$
	 such that 
	$$
	d(X^t(p_n),X^t(q))\leq C_0\mu^{\frac n 2},\quad \forall\, t \in \left[\frac{-nT}{2},\frac{nT}{2}+T'\right].
	$$ 
\end{lemma}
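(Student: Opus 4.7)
\emph{Plan.} I would prove the lemma by the classical Anosov closing/shadowing mechanism applied to the broken orbit segment $\{X^t(q)\}_{t\in[-nT/2,\,nT/2+T']}$, viewed as a pseudo-orbit once its endpoints are identified. Both endpoints lie close to $p$: $X^{-nT/2}(q)\in W^u_{\mathrm{loc}}(p)$ at distance $O(\lambda^{-n/2})$, and $X^{nT/2+T'}(q)=X^{nT/2}(q')\in W^s_{\mathrm{loc}}(p)$ at distance $O(\mu^{n/2})$. Since $p$ is center-expanding, $\mu\lambda>1$ implies $\lambda^{-n/2}\leq\mu^{n/2}$, so the pseudo-orbit closing error is $O(\mu^{n/2})$, and hyperbolicity at $p$ yields a unique true periodic orbit shadowing it within $C_0\mu^{n/2}$.

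\emph{Realization via the linearized return map.} I would carry this out in the Sternberg chart $\Phi_p$, where the return map is $\mathcal{L}$ from~\eqref{appli_lin} and the stable and unstable foliations become the coordinate foliations by $(\hat\xi,\xi)$-planes and $(\eta,\hat\eta)$-planes, respectively; in these coordinates $q=(0,0,\eta_q,\hat\eta_q)$ and $q'=(\hat\xi_{q'},\xi_{q'},0,0)$. Let $\Gamma\colon U_q\to U_{q'}$ denote the smooth Poincar\'e passage map induced by the flow for time $\simeq T'$. Flow-invariance of $W^s,W^u$ forces $D\Gamma(q)$ to be block-diagonal with respect to $T_q\Sigma_p=E^s\oplus E^u$, of the form $D\Gamma(q)=\mathrm{diag}(A_{SS},A_{UU})$ with $A_{UU}$ invertible (transversality of the homoclinic intersection). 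Seeking $p_n$ close to $q$ as a fixed point of $\mathcal{L}^n\circ\Gamma$ and setting $v=p_n-q=(v_s,v_u)$, the block-diagonal structure decouples the linearized equation: in the stable direction, $(I-\mathrm{diag}(\hat\mu^n,\mu^n)A_{SS})v_s=\mathrm{diag}(\hat\mu^n,\mu^n)(\hat\xi_{q'},\xi_{q'})+O(|v|^2)$ gives $v_s=O(\mu^n)$ via a contraction of rate $O(\mu^n)$; in the unstable direction, $(I-\mathrm{diag}(\lambda^n,\hat\lambda^n)A_{UU})v_u=-(\eta_q,\hat\eta_q)+O(|v|^2)$ gives $v_u=O(\lambda^{-n})$ by inverting the strongly expanding block. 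The Banach fixed-point theorem provides, for $n\geq n_0$, a unique $p_n\in\Sigma_p$ with $\|p_n-q\|=O(\mu^n)$, and the period equals $T_n=\sum_{k=0}^{n-1}\tau_p(\mathcal{L}^k(\Gamma(p_n)))+\tau_\Gamma(p_n)=nT+T'+O(\mu^{n/2})$ by~\eqref{const_time_axes}.

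\emph{Shadowing bound and main obstacle.} For the uniform estimate $d(X^t(p_n),X^t(q))\leq C_0\mu^{n/2}$, I would bound the discrete-time distances on the backward side $\mathcal{L}^{-k}(p_n)-\mathcal{L}^{-k}(q)=\mathcal{L}^{-k}(v)$, dominated by its weak-stable component of magnitude $\mu^{-k}\cdot O(\mu^n)=O(\mu^{n-k})\leq\mu^{n/2}$ throughout $k\in[0,n/2]$, and on the forward side $\mathcal{L}^k(\Gamma(p_n))-\mathcal{L}^k(q')=\mathcal{L}^k(D\Gamma(q)v+O(|v|^2))$, dominated by its weak-unstable component of magnitude $\lambda^k\cdot O(\lambda^{-n})=O(\lambda^{k-n})\leq\lambda^{-n/2}\leq\mu^{n/2}$ throughout $k\in[0,n/2]$ (using center-expansion); the transition segment $t\in[0,T']$ is controlled by compactness of $\Gamma$ together with the bound $\|v\|=O(\mu^n)$. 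Continuous interpolation along flow segments between Poincar\'e crossings contributes only a bounded multiplicative factor. The main technical point is the justification of the block-diagonal structure of $D\Gamma(q)$, which relies on the linearization of $W^s,W^u$ as coordinate foliations in the Sternberg chart (itself a consequence of the nonresonance condition~\eqref{eq_resonances}), together with the uniform-in-$n$ control of the contraction-mapping arguments; uniqueness of $p_n$ is immediate from this contraction framework.
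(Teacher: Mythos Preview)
The paper does not give its own proof of this lemma: it calls the result ``standard'' and refers to \cite[Lemma~4.1]{GLRH}. Your proposal is a correct and natural way to establish it, and in fact uses exactly the linearizing chart and the later estimates of Lemma~\ref{prel_est_xin} in the paper.

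One technical point deserves tightening. In the unstable equation you write
\[
(I-\mathrm{diag}(\lambda^n,\hat\lambda^n)A_{UU})v_u=-(\eta_q,\hat\eta_q)+O(|v|^2),
\]
but the quadratic remainder from $\Gamma$ gets hit by $\mathcal L^n$, so the correct right-hand side carries $\mathrm{diag}(\lambda^n,\hat\lambda^n)\,O(|v|^2)$, which is not \emph{a priori} small (there is no assumed relation such as $\hat\lambda\mu^2<1$ or $\hat\lambda<\lambda^2$). The fix is already contained in your own observation that in the Sternberg chart the stable and unstable foliations of $\Pi_p$ are the coordinate $(\hat\xi,\xi)$- and $(\eta,\hat\eta)$-plane foliations: since $\Gamma$ is a flow holonomy it preserves both foliations, hence $\Gamma$ is a genuine product $\Gamma=(\Gamma_s,\Gamma_u)$, not merely block-diagonal to first order. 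The fixed-point problem then decouples exactly, without Taylor expansion: $(\hat\xi_n,\xi_n)=\mathrm{diag}(\hat\mu^n,\mu^n)\Gamma_s(\hat\xi_n,\xi_n)$ is a contraction giving $(\hat\xi_n,\xi_n)=O(\mu^n)$ (in fact $\hat\xi_n=O(\hat\mu^n)$), while $(\eta_n,\hat\eta_n)=\Gamma_u^{-1}\bigl(\mathrm{diag}(\lambda^{-n},\hat\lambda^{-n})(\eta_n,\hat\eta_n)\bigr)$ is a contraction toward $\Gamma_u^{-1}(0,0)=(\eta_q,\hat\eta_q)$ giving $v_u=O(\lambda^{-n})$. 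With this correction the rest of your argument (the shadowing bound via $\mu^{n-k}$ on the backward side and $\lambda^{k-n}\le\mu^{n/2}$ on the forward side, using $\mu\lambda>1$) goes through as written.
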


\section{The Proof}
This section is devoted to the proof of Proposition~\ref{prop_main}.
Recall that we have reduced the proof of the main theorem to Proposition~\ref{prop_main}.

\subsection{Asymptotic formula for full Jacobian along shadowing periodic orbits (uniform in the family)}

Let $p=X^T(p)$ be a center-expanding periodic point. 
Let $q=\Phi_p(0,0,\eta_\infty,\hat \eta_\infty)\in W^u_{\mathrm{loc}}(p)$ be a homoclinic point, and   fix $T'>0$ such that $q'=X^{T'}(q)=\Phi_p(\hat\xi_\infty,\xi_\infty,0,0)\in W^s_{\mathrm{loc}}(p)$. Let $(p_n=\Phi_p(\hat \xi_n,\xi_n,\eta_n,\hat\eta_n))_n$ be the sequence of shadowing periodic points associated to $q$ given by Lemma~\ref{lemme_shadow}, and let $p_n'=X^{\tau_n}(p_n)=\Phi_p(\hat \xi_n',\xi_n',\eta_n',\hat\eta_n')\in \Sigma_p$ the point in the orbit of $p_n$ which is closest to $q'$. In particular, $\tau_n\approx T'$, $p_n\approx X^{nT}(p_n')$, and by the shadowing lemma, 
\begin{equation}\label{prelim_est}
    (\hat\xi_n,\xi_n,\eta_n,\hat \eta_n)= (0,0,\eta_\infty,\hat \eta_\infty)+O(\mu^{\frac n2}),\quad (\hat\xi_n',\xi_n',\eta_n',\hat \eta_n')= (\hat\xi_\infty,\xi_\infty,0,0)+O(\mu^{\frac n2}).
\end{equation} 
Recall that we denote by $T_n\approx nT+T'$ the period of the point $p_n$. 
\begin{proposition}\label{prop_trois_un}
Given a center-expanding periodic point $p=X^T(p)$, any homoclinic point $q\in W^u_{\mathrm{loc}}(p)$, $q'=X^{T'}(q)\in W^s_{\mathrm{loc}}(p)$, and the corresponding sequence of shadowing periodic points $(p_n)$ associated to $q,q'$, their periods $T_n$ obey the following asymptotic expansion:
    $$
    T_n=nT+T'+\xi_\infty \big(\mathcal{T}_p^{ws}(\eta_\infty,\hat \eta_\infty)-P_p(\eta_\infty,\hat \eta_\infty)\big)  \mu^n+O(\theta^n),
    $$
    with  
$
\theta:=\mu^{\frac{2\log \lambda}{\log \lambda-\log \mu}}\in (\mu^2,\mu)$, and 
    $$
    P_p(\eta_\infty,\hat \eta_\infty)=-\sum_{\ell=1}^{+\infty} \mu^{-\ell} \partial_2 \tau_p(0,0,\lambda^{-\ell} \eta_\infty,\hat \lambda^{-\ell}\hat \eta_\infty),
    $$
    where $\partial_2$ is the partial derivative with respect to the second variable $\xi$.
\end{proposition}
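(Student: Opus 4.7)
The plan is to write the period as $T_n = \tau_n + S_n$, splitting the orbit of $p_n$ into the ``homoclinic excursion'' from $p_n$ to $p_n'$ (in time $\tau_n$) and the $n$ Poincaré returns from $p_n'$ back to $p_n$, with total time
$S_n := \sum_{k=0}^{n-1}\tau_p(\mathcal{L}^k(\hat\xi_n',\xi_n',\eta_n',\hat\eta_n'))$. By the linearization \eqref{appli_lin} the $k$-th iterate has explicit coordinates $(\hat\mu^k\hat\xi_n',\mu^k\xi_n',\lambda^k\eta_n',\hat\lambda^k\hat\eta_n')$, reducing the analysis to evaluating Taylor expansions of $\tau_p$ and of a transit-time function against geometric sums. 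A preliminary step upgrades the crude shadowing bound \eqref{prelim_est} using the closing equation $(\hat\xi_n',\xi_n',\eta_n',\hat\eta_n') = \Pi^H(\mathcal{L}^n(\hat\xi_n',\xi_n',\eta_n',\hat\eta_n'))$, where $\Pi^H$ denotes the smooth transit Poincaré map in $\Sigma_p$ sending (a neighborhood of) $q$ to (a neighborhood of) $q'$. An implicit-function argument at the ``ideal'' fixed point $(\hat\xi_\infty,\xi_\infty,0,0)$ then yields refined asymptotics $\xi_n' = \xi_\infty+O(\mu^n)$, $\eta_n'=\lambda^{-n}\eta_\infty+O(\lambda^{-n}\mu^n)$, and similar bounds for $\hat\xi_n'$ and $\hat\eta_n'$.

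The expansion of $S_n$ uses the key observation that, by \eqref{const_time_axes}, $\tau_p-T$ vanishes on both $\{(\eta,\hat\eta)=0\}$ and $\{(\hat\xi,\xi)=0\}$; hence each monomial in its Taylor expansion at $0$ has the form $\hat\xi^a\xi^b\eta^c\hat\eta^d$ with $a+b\geq 1$ and $c+d\geq 1$. Substituting $\mathcal{L}^k(p_n')$ yields a geometric series in $(\hat\mu^a\mu^b\lambda^c\hat\lambda^d)^k$; after summing and re-indexing $\ell=n-k$, only the family $(a,b)=(0,1)$ produces a contribution at scale $\mu^n$. These monomials reassemble into $\xi\,\partial_2\tau_p(0,0,\eta,\hat\eta)$, and with the refined shadowing one obtains
\begin{equation*}
\sum_{k=0}^{n-1}\mu^k\xi_n'\,\partial_2\tau_p(0,0,\lambda^k\eta_n',\hat\lambda^k\hat\eta_n') = \mu^n\xi_\infty\sum_{\ell=1}^{+\infty}\mu^{-\ell}\,\partial_2\tau_p(0,0,\lambda^{-\ell}\eta_\infty,\hat\lambda^{-\ell}\hat\eta_\infty) + O(\theta^n),
\end{equation*}
which recovers exactly $-\mu^n\xi_\infty P_p(\eta_\infty,\hat\eta_\infty) + O(\theta^n)$. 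Convergence of the $\ell$-series is guaranteed by $\mu\lambda^c\hat\lambda^d>1$ for all relevant $(c,d)$, a consequence of center-expansion $\mu\lambda>1$ together with $\hat\lambda>\lambda$. The remaining monomial families contribute errors of size $O(\mu^{2n})$ or $O(\hat\mu^n)$, which are absorbed in $O(\theta^n)$.

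The companion analysis of $\tau_n$ identifies the template contribution. The dominant displacement is $p_n - q\approx \mu^n\xi_\infty\partial_\xi$ in the chart, and \eqref{template-def} says that $\partial_\xi+\mathcal{T}_p^{ws}(\eta,\hat\eta)\partial_t+(\text{components in }\partial_\eta,\partial_{\hat\eta})$ lies in $E_X^s$ along $W^u_{\mathrm{loc}}(p)$. Since $\Sigma_p\supset W^s_{\mathrm{loc}}(p)$, the pushforward by $DX^{T'}(q)$ of a stable vector lands tangent to $\Sigma_p$ at $q'$; only the flow-direction tilt forces a time correction. Tracking all subleading $\mu^n$-scale contributions — including the $O(\mu^n)$ refinements to $\eta_n,\hat\eta_n$ and the tilt of $E_X^u$ at $q'\in W^s_{\mathrm{loc}}(p)$ — one verifies that $\tau_n - T' = \xi_\infty\mathcal{T}_p^{ws}(\eta_\infty,\hat\eta_\infty)\mu^n + O(\theta^n)$. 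Combining with the formula for $S_n$ gives the proposition; the exact error exponent $\theta = \mu^{2\log\lambda/(\log\lambda-\log\mu)}$ emerges from the bilinear remainders of $\Pi^H$ and $\tau_p$ evaluated on the refined shadowing estimates.

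The main obstacle is this last identification — verifying that the various subleading contributions to $\tau_n$ (from the refined shadowing and from the tilt of the unstable bundle at $q'$) conspire to leave only $\mathcal{T}_p^{ws}(\eta_\infty,\hat\eta_\infty)$ in the leading coefficient. The remainder is technical bookkeeping, with the precise exponent $\theta$ following from balancing the various eigenvalue rates arising in the iterated closing equation.
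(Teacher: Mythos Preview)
Your decomposition $T_n=\tau_n+S_n$, the refined shadowing via the closing equation, and the extraction of $-\xi_\infty P_p\,\mu^n$ from $S_n$ by exploiting that $\tau_p-T$ vanishes on both coordinate planes all match the paper's argument closely. Two remarks on execution.

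First, because $\tau_p$ is only $C^3$ and the iterates $\mathcal L^k(p_n')$ are not all near the origin (the $(\eta,\hat\eta)$-coordinates are $O(1)$ for $k$ near $n$), your monomial-by-monomial Taylor picture is heuristic as stated; to make it rigorous you need the same split at $\ell_n=\lfloor\tfrac{-\log\mu}{\log\lambda-\log\mu}n\rfloor$ (where $\mu^{n-\ell_n}\approx\lambda^{-\ell_n}$) that the paper performs, expanding around $\tilde q(-\ell)=(0,0,\lambda^{-\ell}\eta_\infty,\hat\lambda^{-\ell}\hat\eta_\infty)$ on one half and around $\tilde q'(n-\ell)$ on the other. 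The specific exponent $\theta$ is then exactly $(\mu\lambda)^{-\ell_n}\mu^n\approx\lambda^{-2\ell_n}$.

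Second, and addressing your self-identified obstacle: the paper's treatment of $\tau_n-T'$ is much cleaner than tracking subleading corrections and unstable tilts at $q'$. One observes that $\bar\tau(p_n):=\tau_n-T'$ is \emph{exactly} the time $p_n$ needs to hit $W^s_{X,\mathrm{loc}}(q)$, since from there the point flows together with $q$ and reaches $\Sigma_p$ after precisely $T'$. Approximating $W^s_{X,\mathrm{loc}}(q)$ near $q$ by its tangent $E^s_X(q)$, whose $t$-coordinate over the $\xi$-direction is $\mathcal{T}^{ws}_p(\eta_\infty,\hat\eta_\infty)$ by the very definition of the template, and using $p_n-q=\xi_\infty\mu^n\,\partial_\xi+O(\max(\lambda^{-n},\hat\mu^n))$, one reads off $\bar\tau(p_n)=\xi_\infty\mathcal{T}^{ws}_p(\eta_\infty,\hat\eta_\infty)\mu^n+O(\max(\lambda^{-n},\mu^{2n}))$ directly --- no bookkeeping of the geometry at $q'$ is required.
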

In the following, we abuse notation and denote $\mathcal{T}^{ws}_p(q):=\mathcal{T}_p^{ws}(\eta_\infty,\hat \eta_\infty)$, $P_p(q):=P_p(\eta_\infty,\hat \eta_\infty)$. 
\begin{remark}
Since the conjugacy map $\Phi_p$ between $\Pi_p$ and its linearization is $C^3$, after this change of coordinates the first return time $\tau_p$ is also $C^3$. Moreover, $\partial_2 \tau_p(0,0,0,0)=0$, hence by Taylor expansion, we see that the  terms $P_p^{(\ell)}(\eta_\infty,\hat \eta_\infty):=\mu^{-\ell}\partial_2 \tau_p(0,0,\lambda^{-\ell} \eta_\infty,\hat \lambda^{-\ell}\hat \eta_\infty)$ of the series defining $P_p(\eta_\infty,\hat \eta_\infty)$ decay at least like $(\mu\lambda)^{-\ell}$ (recall $(\mu\lambda)^{-1}<1$ since $p$ is center-expanding), so $(\eta_\infty,\hat \eta_\infty) \mapsto P_p(\eta_\infty,\hat \eta_\infty)$ is indeed a well-defined continuous function. Moreover, for any $\ell \in \mathbb{N}$,  the function $(\eta_\infty,\hat \eta_\infty) \mapsto P_p^{(\ell)}(\eta_\infty,\hat \eta_\infty)$ is $C^1$ (in fact, $C^2$), and both partial derivatives of $P_p^{(\ell)}$  decay faster than $(\mu\lambda)^{-\ell}$, which ensures that the function $P_p$ is, in fact, $C^1$ regular. 
\end{remark}
 
\begin{lemma}\label{prel_est_xin}
    We have the following expansions 
    \begin{align*}
        (\hat \xi_n,\xi_n,\eta_n,\hat \eta_n)-(0,0,\eta_\infty,\hat \eta_\infty)&=(\hat \mu^n \hat \xi_\infty+O(\hat \mu^n\mu^{n}),\mu^n \xi_\infty+O(\mu^{2n}),O(\lambda^{-n}),O(\lambda^{-n})),\\
        (\hat \xi_n',\xi_n',\eta_n',\hat \eta_n')-(\hat\xi_\infty,\xi_\infty,0,0)&=(O(\mu^n),O(\mu^n),\lambda^{-n}\eta_\infty+O(\lambda^{-2n}),\hat \lambda^{-n} \hat \eta_\infty+O(\hat\lambda^{-n}\lambda^{-n})).
    \end{align*}
\end{lemma}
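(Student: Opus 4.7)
The plan is to turn the shadowing data into a fixed-point equation for $p_n$ inside the Sternberg chart and solve it by an implicit function / contraction argument, crucially exploiting that in these coordinates both the local dynamics $\Pi_p$ and the hyperbolic splitting are constant. First, I would note that the orbit of $p_n$ visits $\Sigma_p$ at $p_n$ (close to $q$) and at $p_n'=X^{\tau_n}(p_n)$ (close to $q'$) with $\tau_n\approx T'$, and between $p_n'$ and $p_n$ performs exactly $n$ Poincar\'e returns inside the Sternberg neighborhood, so that $p_n=\Pi_p^n(p_n')$. In linearizing coordinates this reads
\begin{equation*}
\hat\xi_n=\hat\mu^n\hat\xi_n',\quad \xi_n=\mu^n\xi_n',\quad \eta_n=\lambda^n\eta_n',\quad \hat\eta_n=\hat\lambda^n\hat\eta_n'.
\end{equation*}
Writing the local diffeomorphism $\Pi_{\textup{hom}}\colon p_n\mapsto p_n'$ as $(h_1,h_2,h_3,h_4)$ in Sternberg coordinates, with $\Pi_{\textup{hom}}(q)=q'=(\hat\xi_\infty,\xi_\infty,0,0)$, the point $p_n$ is characterized as the unique fixed point of $\mathcal{L}^n\circ\Pi_{\textup{hom}}$ close to $q$ provided by the shadowing lemma.

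The second ingredient is the block structure of $A:=D\Pi_{\textup{hom}}(q)$. In the Sternberg chart $\Pi_p$ acts as $\mathcal{L}$, whose only invariant subspaces with the correct hyperbolic rates are the coordinate planes; hence throughout the chart one has $E^s\equiv\{(\hat\xi,\xi)\textup{-plane}\}$ and $E^u\equiv\{(\eta,\hat\eta)\textup{-plane}\}$. Since $\Pi_{\textup{hom}}$ is a flow-induced section Poincar\'e map, it preserves $E^s$ and $E^u$, so $A$ is block-diagonal: $A=\textup{diag}(A^{ss},A^{uu})$ with $A^{ss},A^{uu}\in GL(2,\mathbb{R})$. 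Setting $\epsilon_n:=p_n-q=(\epsilon_n^s,\epsilon_n^u)$ with $\epsilon_n^s=(\hat\xi_n,\xi_n)$ and $\epsilon_n^u=(\eta_n-\eta_\infty,\hat\eta_n-\hat\eta_\infty)$, the Taylor expansion of $\mathcal{L}^n\circ\Pi_{\textup{hom}}$ at $q$ combined with the block-diagonal form of $A$ decouples the fixed-point equation (up to quadratic error) into
\begin{align*}
(I-D_n^sA^{ss})\epsilon_n^s&=(\hat\mu^n\hat\xi_\infty,\mu^n\xi_\infty)+O(|\epsilon_n|^2),\\
(I-D_n^uA^{uu})\epsilon_n^u&=-(\eta_\infty,\hat\eta_\infty)+O(|\epsilon_n|^2),
\end{align*}
with $D_n^s:=\textup{diag}(\hat\mu^n,\mu^n)$ and $D_n^u:=\textup{diag}(\lambda^n,\hat\lambda^n)$.

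I would then solve each block separately. The stable block is a small perturbation of $I$, and a Neumann expansion to second order gives $\epsilon_n^s=(\hat\mu^n\hat\xi_\infty,\mu^n\xi_\infty)+D_n^sA^{ss}(\hat\mu^n\hat\xi_\infty,\mu^n\xi_\infty)^\top+O(|\epsilon_n|^2)$, yielding precisely $\hat\xi_n=\hat\mu^n\hat\xi_\infty+O(\hat\mu^n\mu^n)$ and $\xi_n=\mu^n\xi_\infty+O(\mu^{2n})$. For the unstable block, $I-D_n^uA^{uu}$ is dominated by $-D_n^uA^{uu}$ whose inverse has norm $O(\lambda^{-n})$, giving $\epsilon_n^u=-(A^{uu})^{-1}(D_n^u)^{-1}(\eta_\infty,\hat\eta_\infty)+\textup{l.o.t.}$; since $(D_n^u)^{-1}(\eta_\infty,\hat\eta_\infty)=(\lambda^{-n}\eta_\infty,\hat\lambda^{-n}\hat\eta_\infty)$ with $\lambda<\hat\lambda$, both components of $\epsilon_n^u$ are $O(\lambda^{-n})$. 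The self-consistency bound $|\epsilon_n|^2=O(\mu^{2n})$ then closes the contraction. The estimates for the primed quantities follow by direct Taylor expansion of $\Pi_{\textup{hom}}$ at $q$: vanishing of the $su$-block of $A$ gives $\hat\xi_n'-\hat\xi_\infty,\xi_n'-\xi_\infty=O(|\epsilon_n^s|)=O(\mu^n)$, while $\eta_n'=\lambda^{-n}\eta_n$ yields $\eta_n'=\lambda^{-n}\eta_\infty+O(\lambda^{-2n})$, and similarly $\hat\eta_n'=\hat\lambda^{-n}\hat\eta_\infty+O(\hat\lambda^{-n}\lambda^{-n})$.

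The main obstacle is obtaining the sharp rates on $\epsilon_n^u$: the bound $O(\lambda^{-n})$ (rather than the naive $O(\mu^n)$ that the preliminary estimate~\eqref{prelim_est} alone would suggest) depends decisively on the vanishing of the $us$ and $su$ blocks of $A$, i.e.\ on the preservation of the Anosov splitting by $\Pi_{\textup{hom}}$, which pushes all stable--unstable interference into the harmless $O(|\epsilon_n|^2)$ remainder. This precision is essential downstream, since the coefficient $\xi_\infty(\mathcal{T}_p^{ws}(q)-P_p(q))$ appearing in Proposition~\ref{prop_trois_un} is eventually read off from these leading asymptotics.
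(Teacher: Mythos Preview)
Your argument hinges on the claim that $E^s$ and $E^u$ coincide with the constant coordinate planes throughout the Sternberg chart, which would force $A=D\Pi_{\textup{hom}}(q)$ to be block-diagonal. This step is not justified. The Sternberg linearization conjugates the \emph{local} return map $\Pi_p$ to $\mathcal{L}$, but the global invariant distributions are not determined by this local conjugacy alone: they depend on full forward/backward orbits. Concretely, since $q\in W^u_{\mathrm{loc}}(p)$ and its backward $\Pi_p$-orbit remains in the chart, one does get $E^u(q)=\{(\eta,\hat\eta)\text{-plane}\}$; similarly $E^s(q')=\{(\hat\xi,\xi)\text{-plane}\}$ because $q'\in W^s_{\mathrm{loc}}(p)$. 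But $E^s(q)$ is determined by the \emph{forward} orbit of $q$, which leaves the chart along the homoclinic excursion, and is generically a tilted $2$-plane; likewise $E^u(q')$ is generically tilted. Hence both off-diagonal blocks $A^{su}$ and $A^{us}$ are generically nonzero. With $A^{us}\neq 0$, your unstable-block equation acquires the cross term $(A^{uu})^{-1}A^{us}\epsilon_n^s$ of size $O(\mu^n)$, and the method produces only $\epsilon_n^u=O(\mu^n)$ rather than the claimed $O(\lambda^{-n})$ (recall $\lambda^{-1}<\mu$ since $p$ is center-expanding).

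The paper's route is different and avoids linearizing $\Pi_{\textup{hom}}$. It bootstraps: from the crude shadowing bound $O(\mu^{n/2})$ it first reads off $(\eta_n',\hat\eta_n')=O(\lambda^{-n})$ and $(\hat\xi_n,\xi_n)=O(\mu^n)$ directly via $\mathcal{L}^{\pm n}$; then, using only the facts that $W^s_{\mathrm{loc}}(q')$ and $W^u_{\mathrm{loc}}(q)$ \emph{are} coordinate planes (true, for the reason above) together with bi-Lipschitzness of the transition diffeomorphism $\bar\Pi$ (which sends invariant \emph{leaves} to invariant leaves), it transports the unstable-distance estimate $d_{W^u}(q',p_n')=O(\lambda^{-n})$ back to $d_{W^u}(q,p_n)=O(\lambda^{-n})$ and reads off the $(\eta,\hat\eta)$-discrepancy from the coordinate form of $W^u_{\mathrm{loc}}(q)$. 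A second pass through $\mathcal{L}^{\pm n}$ then upgrades the remaining estimates. So the mechanism is a foliation-level transport plus iteration of the linear model, not a derivative-level block structure of the transition map.
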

\begin{proof}
    By~\eqref{prelim_est}, and since in the chart $\Phi_p$ the dynamics is given by the linear map $\mathcal{L}$~\eqref{appli_lin}, we have 
    \begin{align}
    (\eta_n',\hat \eta_n')&=(\lambda^{-n} \eta_n,\hat{\lambda}^{-n} \hat\eta_n)=(\lambda^{-n} \eta_\infty+O(\lambda^{-n}\mu^{\frac n2}),\hat{\lambda}^{-n} \hat\eta_\infty+O(\hat\lambda^{-n}\mu^{\frac n2})), \label{est_eta_prime}\\
    (\hat \xi_n,\xi_n)&=(\hat\mu^n \hat\xi_n',\mu^n \xi_n')=(\hat\mu^n \hat\xi_\infty+O(\hat \mu^n\mu^{\frac 12 n}),\mu^n \xi_\infty+O(\mu^{\frac 32 n})). \label{est_xi}
    \end{align}
    Let us denote by $\bar\Pi= X^{\bar \tau}\colon U_q \to U_{q'}$ the Poincar\'e map from a neighborhood $U_q\subset \Sigma_p$ of $q$ to a neighboorhod $U_{q'}\subset \Sigma_p$ of $q'$, with $\bar \tau\approx T'$. Since $\Phi_p^{-1}(W_{\mathrm{loc}}^s(q'))\subset (-1,1)^2 \times \{(0,0)\}$ and $\Phi_p^{-1}(W_{\mathrm{loc}}^u(q'))$ are transverse, estimate~\eqref{est_eta_prime} shows that the unstable distance $d_{W^u}(q',p_n')$ between $q'$ and $p_n'$ is of order $O(\lambda^{-n})$. Applying the $C^3$ diffeomorphism $\bar \Pi^{-1}$, we see that the unstable distance $d_{W^u}(q,p_n)$ between the points $q=\bar \Pi^{-1}(q')$ and $p_n=\bar \Pi^{-1}(p_n')$ is also of order $O(\lambda^{-n})$. Since $\Phi_p^{-1}(W_{\mathrm{loc}}^u(q))\subset \{(0,0)\}\times (-1,1)^2$, we deduce that 
    $$
    (\eta_n,\hat \eta_n)=(\eta_\infty,\hat\eta_\infty)+O(\lambda^{-n}).
    $$
    Using estimate~\eqref{est_xi}, analogous reasoning leads to 
    $$
    (\hat\xi_n',\xi_n')=(\hat\xi_\infty,\xi_\infty)+O(\mu^n). 
    $$
    Applying $\mathcal L^{\pm n}$ to the last two estimates as in~\eqref{est_eta_prime}-\eqref{est_xi} allows us to obtain upgraded estimates 
    \begin{align*}
    (\hat \xi_n,\xi_n)&=(\hat \mu^n \hat \xi_\infty+O(\hat \mu^n\mu^{n}),\mu^n \xi_\infty+O(\mu^{2n})),\\
    (\eta_n',\hat \eta_n')&=(\lambda^{-n} \eta_\infty+O(\lambda^{-2n}),\hat{\lambda}^{-n} \hat\eta_\infty+O(\hat\lambda^{-n}\lambda^{-n})),
    \end{align*}
    which concludes the proof. 
\end{proof}

Again, since the dynamics in the charts is linear~\eqref{appli_lin}, the previous estimates at the entrance/exit points $p_n',p_n$ can be immediately propagated to the whole orbit: 
\begin{corollary}\label{est_prop}
    For any $\ell \in \{0,\cdots,n\}$, let 
    \begin{align*}
    \tilde{p}_n(-\ell)&:=\Phi_p^{-1}(\Pi_p^{-\ell}(p_n))=(\hat\mu^{-\ell}\hat \xi_n,\mu^{-\ell}\xi_n,\lambda^{-\ell}\eta_n,\hat{\lambda}^{-\ell}\hat \eta_n),\\
    \tilde{q}(-\ell)&:=\Phi_p^{-1}(\Pi_p^{-\ell}(q))=(0,0,\lambda^{-\ell}\eta_\infty,\hat{\lambda}^{-\ell}\hat \eta_\infty),\\
    \tilde{q}'(\ell)&:=\Phi_p^{-1}(\Pi_p^{\ell}(q'))=(\hat \mu^\ell\hat\xi_\infty,\mu^\ell\xi_\infty,0,0).
    \end{align*}
    Then, we have 
    \begin{align*}
        \tilde{p}_n(-\ell)-\tilde{q}(-\ell)&=(\hat\mu^{-\ell}\hat \xi_n,\mu^{-\ell}\xi_n,\lambda^{-\ell}\eta_n,\hat{\lambda}^{-\ell}\hat \eta_n)-(0,0,\lambda^{-\ell}\eta_\infty,\hat{\lambda}^{-\ell}\hat \eta_\infty)\\&=(\hat \mu^{n-\ell} \hat \xi_\infty+O(\mu^{n}\hat \mu^{n-\ell}),\mu^{n-\ell} \xi_\infty+O(\mu^{2n-\ell}),O(\lambda^{-(n+\ell)}),O(\lambda^{-{n}}\hat \lambda^{-\ell})),\\
        \tilde{p}_n(-\ell)-\tilde{q}'(n-\ell)&=(\hat\mu^{n-\ell}\hat \xi_n',\mu^{n-\ell}\xi_n',\lambda^{n-\ell}\eta_n',\hat{\lambda}^{n-\ell}\hat \eta_n')-(\hat \mu^{n-\ell}\hat\xi_\infty,\mu^{n-\ell}\xi_\infty,0,0)\\
        &=(O(\mu^{n}\hat \mu^{n-\ell}),O(\mu^{2n-\ell}),\lambda^{-\ell}\eta_\infty+O(\lambda^{-(n+\ell)}),\hat \lambda^{-\ell} \hat \eta_\infty+O(\lambda^{-n}\hat \lambda^{-\ell})).
    \end{align*}
\end{corollary}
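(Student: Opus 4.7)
The plan is to read both estimates off of Lemma~\ref{prel_est_xin} by applying the linearization. Since the Sternberg chart $\Phi_p$ conjugates $\Pi_p$ to the diagonal linear map $\mathcal{L}$, any finite iterate $\Pi_p^{\pm k}$ applied to a point whose $\Pi_p$-orbit remains inside the linearization domain acts coordinate-wise by products of the eigenvalues $\hat\mu, \mu, \lambda, \hat\lambda$. The shadowing estimate in Lemma~\ref{lemme_shadow} guarantees that every iterate $\Pi_p^{-\ell}(p_n)$ with $\ell \in \{0,\ldots, n\}$ stays within a fixed neighborhood of $p$ (viewing it, for $\ell\leq n/2$, as a backward iterate shadowing $\Pi_p^{-\ell}(q)$, and for $\ell>n/2$, as a forward iterate of $p_n'$ shadowing $\Pi_p^{n-\ell}(q')$), hence inside the linearization domain.

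For the first estimate I would simply compute
\[
\tilde{p}_n(-\ell) - \tilde{q}(-\ell) = \bigl(\hat\mu^{-\ell}\hat\xi_n,\, \mu^{-\ell}\xi_n,\, \lambda^{-\ell}(\eta_n - \eta_\infty),\, \hat\lambda^{-\ell}(\hat\eta_n - \hat\eta_\infty)\bigr)
\]
and insert the first-part bounds of Lemma~\ref{prel_est_xin}: $\hat\xi_n = \hat\mu^n\hat\xi_\infty + O(\hat\mu^n\mu^n)$, $\xi_n = \mu^n\xi_\infty + O(\mu^{2n})$, $\eta_n - \eta_\infty = O(\lambda^{-n})$, $\hat\eta_n - \hat\eta_\infty = O(\lambda^{-n})$. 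Distributing the eigenvalue factors from $\mathcal{L}^{-\ell}$ over each coordinate yields verbatim the four announced terms.

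For the second estimate, the key observation is the orbit identity $\Pi_p^n(p_n') = p_n$: the shadowing loop consists of one Poincar\'e return $p_n \mapsto p_n'$ of flow time $\tau_n \approx T'$, followed by $n$ further returns of duration $\approx T$ each that close the periodic orbit back to $p_n$ after total time $T_n \approx nT + T'$. Consequently $\Pi_p^{-\ell}(p_n) = \Pi_p^{n-\ell}(p_n')$ for $\ell \in \{0,\ldots,n\}$, and applying $\mathcal{L}^{n-\ell}$ to $\Phi_p^{-1}(p_n') = (\hat\xi_n', \xi_n', \eta_n', \hat\eta_n')$ gives $\tilde{p}_n(-\ell) = (\hat\mu^{n-\ell}\hat\xi_n',\, \mu^{n-\ell}\xi_n',\, \lambda^{n-\ell}\eta_n',\, \hat\lambda^{n-\ell}\hat\eta_n')$. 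Subtracting $\tilde{q}'(n-\ell) = (\hat\mu^{n-\ell}\hat\xi_\infty, \mu^{n-\ell}\xi_\infty, 0, 0)$ and substituting the second-part bounds of Lemma~\ref{prel_est_xin} — namely $\hat\xi_n' - \hat\xi_\infty = O(\mu^n)$, $\xi_n' - \xi_\infty = O(\mu^n)$, $\eta_n' = \lambda^{-n}\eta_\infty + O(\lambda^{-2n})$, $\hat\eta_n' = \hat\lambda^{-n}\hat\eta_\infty + O(\hat\lambda^{-n}\lambda^{-n})$ — reduces the problem to distributing the factors $\hat\mu^{n-\ell}, \mu^{n-\ell}, \lambda^{n-\ell}, \hat\lambda^{n-\ell}$ and reading off the claimed expansion term by term.

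There is essentially no hard step here: the corollary is a mechanical propagation of the entrance/exit estimates through the linear dynamics. The only bookkeeping points are the orbit identity $\Pi_p^n(p_n') = p_n$ that links the two viewpoints, and the verification that the relevant orbit segment stays inside the Sternberg chart — both of which are built into the setup via Lemma~\ref{lemme_shadow} and the linearization~\eqref{appli_lin}.
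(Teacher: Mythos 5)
Your proposal is correct and is exactly the paper's argument: the paper states this corollary without separate proof, as an immediate propagation of the entrance/exit estimates of Lemma~\ref{prel_est_xin} through the linear dynamics $\mathcal{L}^{-\ell}$ (equivalently $\mathcal{L}^{n-\ell}$ applied at $p_n'$, using $\Pi_p^n(p_n')=p_n$ inside the chart), and your coordinate-by-coordinate bookkeeping reproduces each stated error term.
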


\begin{corollary}\label{princ_part_exp}
As $n \to +\infty$, we have the asymptotic expansion 
\begin{equation}\label{asym_exp1}
\sum_{\ell=1}^{n} \tau_p(\tilde{p}_n(-\ell))=n T + \xi_\infty\left[\sum_{\ell=1}^{+\infty}\partial_2 \tau_p(\tilde{q}(-\ell)) \mu^{-\ell}  \right]\mu^n+O(\theta^n),
\end{equation}
where 
$
\theta:=\mu^{\frac{2\log \lambda}{\log \lambda-\log \mu}}\in (\mu^2,\mu)$. 
\end{corollary}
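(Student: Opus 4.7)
The plan is to Taylor-expand $\tau_p$ at each ``ideal'' point $\tilde q(-\ell) = (0,0,\lambda^{-\ell}\eta_\infty,\hat\lambda^{-\ell}\hat\eta_\infty)$, where by~\eqref{const_time_axes} we have $\tau_p(\tilde q(-\ell)) = T$, and then to use Corollary~\ref{est_prop} to control the difference $v_\ell := \tilde p_n(-\ell) - \tilde q(-\ell)$. The key algebraic input is that $\tau_p - T$ vanishes on \emph{both} the stable axes $\{\hat\xi=\xi=0\}$ and the unstable axes $\{\eta=\hat\eta=0\}$. Applying Hadamard's lemma twice in succession (once in each pair of coordinates) yields a smooth factorization
\[
\tau_p(\hat\xi,\xi,\eta,\hat\eta) - T = \hat\xi\,\eta\, G_{11} + \hat\xi\,\hat\eta\, G_{12} + \xi\,\eta\, G_{21} + \xi\,\hat\eta\, G_{22},
\]
with smooth $G_{ij}$ on $(-1,1)^4$. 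Differentiating at points $(0,0,\eta,\hat\eta)$ of the unstable axes reads off the identity $\partial_2\tau_p(0,0,\eta,\hat\eta) = \eta\, G_{21}(0,0,\eta,\hat\eta) + \hat\eta\, G_{22}(0,0,\eta,\hat\eta)$.

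I would then substitute $\tilde p_n(-\ell) = (a_\ell,b_\ell,c^*_\ell,d^*_\ell)$ with the expansions from Corollary~\ref{est_prop}, where $|a_\ell| = O(\hat\mu^{n-\ell})$, $|b_\ell| = O(\mu^{n-\ell})$, $|c^*_\ell| = O(\lambda^{-\ell})$ and $|d^*_\ell| = O(\hat\lambda^{-\ell})$. The dominant contribution to $\sum_\ell (\tau_p(\tilde p_n(-\ell))-T)$ comes from the $b_\ell c^*_\ell G_{21} + b_\ell d^*_\ell G_{22}$ piece. Freezing $G_{ij}(\tilde p_n(-\ell))$ at $G_{ij}(\tilde q(-\ell))$ (at a cost absorbed into errors via $\|v_\ell\|$), retaining the leading $\mu^{n-\ell}\xi_\infty$ part of $b_\ell$, and recombining with the formula above for $\partial_2\tau_p$ on the unstable axes, the main term becomes
\[
\xi_\infty\,\mu^n\sum_{\ell=1}^n \mu^{-\ell}\,\partial_2\tau_p(\tilde q(-\ell)).
\]
Extending the partial sum to $+\infty$ incurs a tail bounded by $\mu^n\sum_{\ell > n}(\mu\lambda)^{-\ell} = O(\lambda^{-n})$, using center-expansion $\mu\lambda > 1$; and since the definition of $\theta$ forces $\lambda^{-1} < \theta$, this tail is $O(\theta^n)$.

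The main obstacle will be showing that every remaining contribution is $O(\theta^n)$. These split into two families: the three off-diagonal products $a_\ell c^*_\ell G_{11}$, $a_\ell d^*_\ell G_{12}$, $b_\ell d^*_\ell G_{22}$; and the Taylor-remainder corrections arising from freezing $G_{ij}$ at $\tilde q(-\ell)$, which produce additional stable factors $a_\ell$, $b_\ell$ multiplying derivatives of $G_{ij}$ that themselves inherit a factor of size $O(\lambda^{-\ell})$ at $\tilde q(-\ell)$, thanks to $\tau_p(\hat\xi,\xi,0,0)=\tau_p(0,0,\eta,\hat\eta)=T$ being identities. Every such contribution reduces to a geometric sum of the form $\sum_{\ell=1}^n X^{n-\ell}\,Y^{-\ell}$ with $X \in \{\hat\mu,\mu,\mu^2,\hat\mu\mu,\ldots\}$ and $Y \in \{\lambda,\hat\lambda\}$, whose partial sum is bounded by a multiple of $\max(X^n, Y^{-n})$, with at most an $n$-factor in the degenerate case $XY=1$. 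The specific value $\theta = \mu^{2\log\lambda/(\log\lambda-\log\mu)}$ is calibrated precisely so that $\theta^n$ dominates the worst case $\max(\mu^{2n}, \lambda^{-n})$ uniformly, including the polynomial factor arising at the borderline $\mu^2\lambda=1$; this is exactly why the strict inclusion $\theta \in (\mu^2,\mu)$ is invoked. The $\hat\mu$-flavored contributions are handled by the same mechanism, using $\hat\mu < \mu$ to reduce them to the same family of sums.
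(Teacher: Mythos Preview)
Your approach is correct and takes a genuinely different route from the paper's. The paper does not use a Hadamard factorization; instead it introduces the cutoff index $\ell_n := \lfloor \frac{-n\log\mu}{\log\lambda - \log\mu}\rfloor$ (chosen so that $\mu^{n-\ell_n}\approx\lambda^{-\ell_n}$), Taylor-expands $\tau_p$ around $\tilde q(-\ell)$ for $\ell\le\ell_n$ and around $\tilde q'(n-\ell)$ for $\ell>\ell_n$, and invokes explicitly that $\partial_1\tau_p(0)=\partial_2\tau_p(0)=0$ together with the mixed-only structure of the Hessian $D^2\tau_p(0)$. Your global bilinear factorization $\tau_p-T=\sum_{i,j}(\text{stable coord})_i\,(\text{unstable coord})_j\,G_{ij}$ encodes all of that structure at once and eliminates the need to switch expansion centers at $\ell_n$; this is cleaner and arguably more transparent.

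Two small points to tighten. First, $b_\ell d^*_\ell G_{22}$ is part of the main contribution (via your own identity $\partial_2\tau_p(0,0,\eta,\hat\eta)=\eta\,G_{21}+\hat\eta\,G_{22}$), not one of the ``off-diagonal'' errors; the only genuinely off-diagonal products are the two involving $a_\ell$. Second, your last sentence --- ``use $\hat\mu<\mu$ to reduce the $\hat\mu$-flavored contributions to the same family of sums'' --- does not quite work as stated: replacing $\hat\mu^{n-\ell}$ by $\mu^{n-\ell}$ in $\sum_\ell \hat\mu^{n-\ell}\lambda^{-\ell}$ yields $O(\mu^n)$, the size of the main term, not $O(\theta^n)$. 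What actually saves this term (and the paper's own proof leaves this equally implicit) is that in the de la Llave neighborhood one has $\hat\mu\lambda<1$, whence the sum is $O(\lambda^{-n})=O(\theta^n)$; alternatively, one can split at $\ell_n$ as the paper does and bound each half separately.
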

\begin{proof}
For any $n \geq 0$, we define the integer 
\begin{equation}\label{def_elln}
\ell_n:=\left\lfloor\frac{-\log \mu}{\log \lambda-\log \mu}n\right\rfloor,\quad \text{so that }\mu^{n-\ell_n}\approx\lambda^{-\ell_n}. 
\end{equation}
Recall that the roof function $\tau_p$ in normal coordinates is $C^3$. 
By~\eqref{const_time_axes}, 
we have $\partial_3\tau_p(0,0,\cdot,\cdot)=\partial_4\tau_p(0,0,\cdot,\cdot)\equiv 0$; 
similarly, $\partial_1\tau_p(\cdot,\cdot,0,0)=\partial_2\tau_p(\cdot,\cdot,0,0)\equiv 0$. Then, by Corollary~\ref{est_prop}, and by Taylor expansion, for $n \gg 1$ and for any $\ell \in \{1,\cdots,\ell_n\}$, we have
\begin{align*}
\tau_p(\tilde{p}_n(-\ell))&=\tau_p(\tilde{q}(-\ell))+\partial_1 \tau_p(\tilde{q}(-\ell)) (\hat \mu^{n-\ell} \hat \xi_\infty+O(\mu^{n}\hat \mu^{n-\ell}))\\
&+\partial_2 \tau_p(\tilde{q}(-\ell)) (\mu^{n-\ell}  \xi_\infty+O(\mu^{2n-\ell}))+O\left[\sup_{[\tilde{q}(-\ell),\tilde{p}_n(-\ell)]} D^2 \tau_p\left(\tilde{p}_n(-\ell)-\tilde{q}(-\ell)\right)^2\right]\\
&=T+\left[\partial_1 \tau_p(0_{\mathbb{R}^4})+O(\lambda^{-\ell})\right] (\hat \mu^{n-\ell} \hat \xi_\infty+O(\mu^{n}\hat \mu^{n-\ell}))\\
&+\partial_2 \tau_p(\tilde{q}(-\ell)) \mu^{n-\ell}  \xi_\infty+\left[\partial_2\tau_p(0_{\mathbb{R}^4})+O(\lambda^{-\ell})\right]O(\mu^{2n-\ell})\\
&+O\left[\left[D^2 \tau_p(0_{\mathbb{R}^4})+O(\lambda^{-\ell})\right]\left(\tilde{p}_n(-\ell)-\tilde{q}(-\ell)\right)^2\right]\\
&=T+\partial_2 \tau_p(\tilde{q}(-\ell)) \mu^{n-\ell}  \xi_\infty+O(\lambda^{-\ell}\hat \mu^{n-\ell})+O(\lambda^{-\ell}\mu^{2n-\ell})\\
&+O(\mu^{n-\ell}\lambda^{-(n+\ell)})+O(\lambda^{-\ell}\mu^{2(n-\ell)}).
\end{align*}
In the above estimates we have used that:
\begin{itemize}
    \item for $\ell \in \{1,\cdots,\ell_n\}$, the main discrepancy between the periodic point $\tilde{p}_n(-\ell))$ and the homoclinic point $\tilde{q}(-\ell)$ is along the second ($\xi$) coordinate, and it is of order $\mu^{n-\ell}$;
    \item $\partial_1\tau_p(0_{\mathbb{R}^4})=\partial_2\tau_p(0_{\mathbb{R}^4})=0$;
    \item the Hessian $D^2 \tau_p(0_{\mathbb{R}^4})$ has $\partial_1^2 \tau_p(0_{\mathbb{R}^4})=\partial_1\partial_2 \tau_p(0_{\mathbb{R}^4})=\partial_2^2 \tau_p(0_{\mathbb{R}^4})=0$, and similarly, $\partial_3^2 \tau_p(0_{\mathbb{R}^4})=\partial_3\partial_4 \tau_p(0_{\mathbb{R}^4})=\partial_4^2 \tau_p(0_{\mathbb{R}^4})=0$; in other words, its only nonzero terms are those involving a mixed derivative of the form stable direction ($ws$ or $ss$) vs.  unstable direction ($wu$ or $uu$). 
\end{itemize}
Recall that $\mu\lambda>1$, $\hat \mu<\mu<1$, and that $\partial_2 \tau_p(\tilde{q}(-\ell))=O(\lambda^{-\ell})$. We deduce that 
\begin{equation*}
    \sum_{\ell=1}^{\ell_n} \tau_p(\tilde{p}_n(-\ell))=\ell_n T + \xi_\infty\left[\sum_{\ell=1}^{+\infty}\partial_2 \tau_p(\tilde{q}(-\ell)) \mu^{-\ell}  \right]\mu^n+O((\mu\lambda)^{-\ell_n})\mu^n.
\end{equation*}
Note that by~\eqref{def_elln}, we have 
$$
(\mu\lambda)^{-\ell_n}\mu^n\approx \lambda^{-2\ell_n}\approx \mu^{\gamma n},\text{ with }\gamma:=\frac{2\log \lambda}{\log \lambda-\log \mu}\in (1,2).  
$$
Moreover, for any $\ell \in \{\ell_n+1,\cdots,n\}$, we have
\begin{align*}
\tau_p(\tilde{p}_n(-\ell))&=\tau_p(\tilde{q}'(n-\ell))+O\left[\sup_{[\tilde{q}'(n-\ell),\tilde p_n(-\ell)]}\partial_3 \tau_p\right]\lambda^{-\ell}+O\left[\sup_{[\tilde{q}'(n-\ell),\tilde p_n(-\ell)]}\partial_4 \tau_p\right]\hat\lambda^{-\ell}\\
&=T+O(\mu^{n-\ell}\lambda^{-\ell}).
\end{align*}
Gathering together the above estimates, we thus conclude that 
$$
\sum_{\ell=1}^{n} \tau_p(\tilde{p}_n(-\ell))=n T + \xi_\infty\left[\sum_{\ell=1}^{+\infty}\partial_2 \tau_p(\tilde{q}(-\ell)) \mu^{-\ell}  \right]\mu^n+O(\mu^{\gamma n}).\qedhere 
$$ 
\end{proof}
Let us now explain how to conclude the proof of Proposition~\ref{prop_trois_un}. In Corollary~\ref{princ_part_exp}, we have computed the total discrepancy between the period $T$ and the return times to the transverse section $\Sigma_p=\{t=0\}$ of the iterates $\Pi_p^{-\ell}(p_n)$, $\ell\in \{1,\cdots,n\}$, which accounts for the term $\xi_\infty P_p(\eta_\infty,\hat \eta_\infty)\mu^n$ in the asymptotic expansion of the periods $T_n$. It thus remains to deal with the time of the excursion from $p_n$ to $p_n'$, which itself shadows closely the homoclinic excursion from $q$ to $q'$. In particular, this excursion time is of the form $T'+\bar \tau(p_n)$, with $\bar \tau(p_n)=o(1)$. In fact, $\bar \tau(p_n)$ accounts for the term involving the weak stable template $\mathcal{T}_p^{ws}$ in Proposition~\ref{prop_trois_un}:
\begin{lemma}\label{asym_exp_bar-tau}
    As $n \to +\infty$, we have the following asymptotic expansion:
    \begin{equation}
        \bar \tau(p_n)=\xi_\infty\mathcal{T}^{ws}_p(\eta_\infty,\hat \eta_\infty) \mu^n+O(\max(\lambda^{-n},\mu^{2n})).
    \end{equation}
\end{lemma}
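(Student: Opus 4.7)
The plan is to Taylor expand the excursion time around the homoclinic point $q$ and extract the leading contribution via the weak-stable template $\mathcal{T}^{ws}_p$. First, I would define $\psi_t(x):=(\iota_p^{-1}(X^{T'}(x)))_t$, the $t$-coordinate of $X^{T'}(x)$ in the chart $\iota_p$ near $q'$. Since $X^{T'}(q)=q'\in \Sigma_p$ we have $\psi_t(q)=0$, and the relation $\iota_p(\cdot,\cdot,t,\cdot,\cdot)=X^t(\iota_p(\cdot,\cdot,0,\cdot,\cdot))$ implies that flowing back by $\psi_t(p_n)$ returns $X^{T'}(p_n)$ to $\Sigma_p$. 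Hence $\tau_n=T'-\psi_t(p_n)$ and $\bar\tau(p_n)=-\psi_t(p_n)$. By smoothness of the flow, $\psi_t(p_n)=(DX^{T'}(q)(p_n-q))_t+O(|p_n-q|^2)$, and Lemma~\ref{prel_est_xin} gives $|p_n-q|=O(\mu^n)$, so the Taylor remainder is $O(\mu^{2n})$.

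For the linear term, I would decompose $p_n-q\in T_q\Sigma_p$ in the $DX^t$-invariant splitting $T_qM=E^s(q)\oplus E^u(q)\oplus\R X(q)$. At $q\in W^u_{\mathrm{loc}}(p)=\{\hat\xi=\xi=0\}$ the subspace $E^u(q)$ is spanned by $\partial_\eta,\partial_{\hat\eta}$, while by the template definition $E^s(q)$ is spanned by $v_{ws}=(0,1,\mathcal{T}^{ws}_p(q),*,*)$ and $v_{ss}=(1,0,\mathcal{T}^{ss}_p(q),*,*)$. Matching the $\hat\xi$- and $\xi$-coordinates of $p_n-q$ identifies the $E^s$-coefficients as $\xi_n,\hat\xi_n$, and matching the $t$-coordinate (which is zero since $p_n-q\in T_q\Sigma_p$) forces the flow-direction coefficient to be $-\xi_n\mathcal{T}^{ws}_p(q)-\hat\xi_n\mathcal{T}^{ss}_p(q)$. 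Under $DX^{T'}(q)$ the $E^s(q)\oplus E^u(q)$ part maps into $E^s(q')\oplus E^u(q')$, and the crucial observation is that both subspaces have vanishing $t$-component in the chart $\iota_p$ at $q'$: by Sternberg linearization $W^s_{\mathrm{loc}}(q')=W^s_{\mathrm{loc}}(p)$ coincides with the $(\hat\xi,\xi)$-plane and $W^u_{\mathrm{loc}}(q')$ is an affine translate of the $(\eta,\hat\eta)$-plane, so their tangent spaces have no $\partial_t$-component, while $DX^{T'}(q)\partial_t=\partial_t|_{q'}$. Therefore $\psi_t(p_n)=-\xi_n\mathcal{T}^{ws}_p(q)-\hat\xi_n\mathcal{T}^{ss}_p(q)+O(\mu^{2n})$.

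Substituting $\xi_n=\xi_\infty\mu^n+O(\mu^{2n})$ and $\hat\xi_n=\hat\xi_\infty\hat\mu^n+O(\hat\mu^n\mu^n)$ from Lemma~\ref{prel_est_xin} gives $\bar\tau(p_n)=\xi_\infty\mathcal{T}^{ws}_p(q)\mu^n+\hat\xi_\infty\mathcal{T}^{ss}_p(q)\hat\mu^n+O(\mu^{2n})$. The secondary $\hat\mu^n$ term is absorbed into $O(\lambda^{-n})$ because $\hat\mu\lambda<1$ for periodic orbits in $\cU$ (an inequality inherited from the de la Llave linear model, where $\hat\mu,\lambda^{-1}$ correspond to the smaller eigenvalues of $A$ and $B$ respectively, with the first strictly smaller than the second), and the stated bound $O(\max(\lambda^{-n},\mu^{2n}))$ follows. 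The main obstacle is the precise identification of $E^s(q'),E^u(q')$ as coordinate planes in $\iota_p$, which relies essentially on Sternberg's linearization (a generic $C^3$ chart would not suffice), together with the bookkeeping of the sub-leading contributions to make sure that none escape the advertised error bound.
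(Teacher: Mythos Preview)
Your approach is close to the paper's sketch (both linearize near $q$ and read off the leading coefficient via the weak-stable template), but it contains one concrete error that is not harmless.

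The step that fails is the assertion that $E^u_X(q')$ has no $\partial_t$-component because ``$W^u_{\mathrm{loc}}(q')$ is an affine translate of the $(\eta,\hat\eta)$-plane''. Sternberg's theorem linearizes the \emph{Poincar\'e return map} on $\Sigma_p$; what becomes an affine $(\eta,\hat\eta)$-plane is the unstable leaf of $\Pi_p$ through $q'$, i.e.\ the projection of the flow leaf $W^u_{X,\mathrm{loc}}(q')$ to $\Sigma_p$ along flow lines. The flow leaf $W^u_{X,\mathrm{loc}}(q')$ itself is not contained in $\Sigma_p$ (only $W^u_{X,\mathrm{loc}}(p)$ and $W^s_{X,\mathrm{loc}}(p)$ were placed inside $\Sigma_p$), so $E^u_X(q')$ generically has a nonzero $\partial_t$-component. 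Concretely, if $v_{ws}=(0,1,\mathcal T^{ws}_p(q),\beta_1,\beta_2)$ spans the weak-stable line at $q$ and $\kappa_j:=dt_{q'}\big(DX^{T'}(q)e_j\big)$ for $j=4,5$, then your own decomposition gives
\[
\partial_\xi\psi_t(q)=-\mathcal T^{ws}_p(q)-\beta_1\kappa_4-\beta_2\kappa_5,
\]
not $-\mathcal T^{ws}_p(q)$. Since the $E^u(q)$-component of $p_n-q$ in the invariant splitting has size $\sim|\xi_n\beta_1|\asymp\mu^n$ (the coordinate vector $e_2$ at $q$ is \emph{not} purely stable), the uncontrolled term $(\beta_1\kappa_4+\beta_2\kappa_5)\xi_\infty\mu^n$ is of the same order as the claimed main term, and your argument gives no reason for it to vanish.

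The paper's own proof is only a sketch and defers the precise bookkeeping to~\cite{GLRH}; its heuristic ``time to reach $W^s_{X,\mathrm{loc}}(q)$'' is not literally well-posed either (a one-dimensional orbit does not meet a two-dimensional leaf in a five-manifold), so the subtlety you ran into is genuine and is exactly where the careful argument has to go. What your write-up is missing is a justification that the contribution from the $E^u$-component of $p_n-q$ under $DX^{T'}$ lands in the stated error term---either by choosing the transversal so that $E^u_X(q')\subset T_{q'}\Sigma_p$ (which is problematic for smoothness), or by a cancellation/identity you have not exhibited, or by reinterpreting $\bar\tau(p_n)$ via the local product structure at $q$ rather than via $X^{T'}$ and the splitting at $q'$.
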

\begin{proof}
    Let us sketch the proof of~\eqref{asym_exp_bar-tau}; for more details, we refer the reader to~\cite[Lemma 4.7-Claim 4.8]{GLRH}). The main steps are as follows: 
\begin{itemize}
    \item we claim that $\bar \tau(p_n)$ is equal to the time that the point $p_n$ needs to reach the stable manifold $W_{X,\mathrm{loc}}^s(q)$ (which is~\textit{a priori} not contained in $\Sigma_p$); indeed, the point $X^{\bar \tau(p_n)}(p_n)$ travels together with $q$, hence reaches $\Sigma_p$ exactly after time $T'$;
    \item near the homoclinic point $q$ (i.e., for $|\hat\xi|,|\xi|\ll 1$ in normal coordinates), the stable manifold $W_{X,\mathrm{loc}}^s(q)$ is well approximated by its tangent space $E_X^s(q)$, which in normal coordinates can be expressed as a graph
    \begin{equation}\label{form_graph}
    \big\{(\hat \xi,\xi,\hat\xi\mathcal{T}^{ss}_p(\eta_\infty,\hat \eta_\infty)+\xi\mathcal{T}^{ws}_p(\eta_\infty,\hat \eta_\infty),*,*):\hat \xi,\xi\in \mathbb{R}\big\},
    \end{equation}
    by the definition of the weak-stable and strong stable templates in~\eqref{template-def};
    \item by Lemma~\ref{prel_est_xin}, the leading term of the discrepancy between $p_n$ and $q$ is along the weak-stable direction, i.e., the second coordinate in normal coordinates, and it is of order $\xi_\infty \mu^n+O(\lambda^{-n})$;
    \item since $p_n\in \Sigma_p=\{t=0\}$, by~\eqref{form_graph}, we deduce that 
    $$\bar \tau(p_n)=\xi_\infty\mathcal{T}^{ws}_p(\eta_\infty,\hat \eta_\infty) \mu^n+O(\max(\lambda^{-n},\mu^{2n})),$$
    where the error term $\mu^{2n}$ comes from the approximation of $W_{X,\mathrm{loc}}^s(q)$ by its tangent space $E_X^s(q)$.\qedhere 
\end{itemize}
\end{proof}

Since $T_n=\sum_{\ell=1}^{n} \tau_p(\tilde{p}_n(-\ell))+\bar \tau(p_n)+T'$, combining the asymptotic expansion  of the former sum obtained in Corollary~\ref{princ_part_exp} with the  estimate of $\bar \tau(p_n)$ derived in Lemma~\ref{asym_exp_bar-tau}, this concludes the proof of Proposition~\ref{prop_trois_un}.\qed

\subsection{Reinterpreting the leading term of the asymptotic formula in the coarse charts}\label{sec_reinterpret}

As explained in Subsection~\ref{coarse_charts}, the computations above are performed in linearizing charts which vary continuously with respect to the flow only in a sufficiently high topology (in fact, $C^4$ topology). Here, given a center-expanding point $p\in M$, and homoclinic points $q,q'$ as above, we explain how the coefficient by the leading exponentially small term in the asymptotic formula in Proposition~\ref{prop_trois_un} can be reinterpreted in coarser charts which depend continuously on the flow in $C^1$ topology. We let 
\begin{align*}
    \zeta_p(q)&:=\mathcal{T}_p^{ws}(\eta_\infty,\hat \eta_\infty)-P_p(\eta_\infty,\hat \eta_\infty),\\
    \omega_p(q,q')&:=\xi_\infty \cdot \zeta_p(q), 
\end{align*}
so that $\omega_p(q,q')$ is the coefficient by the leading exponentially small term in the asymptotic formula derived in Proposition~\ref{prop_trois_un}. We let $\hat \Phi_p$, $\hat \Pi_p$, $\hat \tau_p$, and $\hat{\mathcal{T}}_p^{ws}$ be the associated objects in the coarse charts introduced in Subsection~\ref{coarse_charts}. For any $\ell \geq 0$, we let $\hat \mu_{p,q}(-\ell)$ be the stable Jacobian of $D\hat \Pi_p^{-\ell}$ at $\hat \Phi_p^{-1}(q)$, and let 
\begin{align*}
(0,0,\eta_\infty^\circ,\hat \eta_\infty^\circ)&:=\hat \Phi_p^{-1}(q),\\
    (\hat \xi_\infty^\circ,\xi_\infty^\circ,0,0)&:=\hat \Phi_p^{-1}(q'),
    \end{align*}
    \begin{equation}
        \label{eq_series}
    \hat{P}_p(\eta_\infty^\circ,\hat \eta_\infty^\circ):=-\sum_{\ell=1}^{+\infty} \hat \mu_{p,q}(-\ell) \partial_2 \hat\tau_p\big(\hat \Pi_p^{-\ell}(\hat \Phi_p^{-1}(q))\big),
    \end{equation}
    \begin{align*}
    \hat \zeta_p(q)&:=\hat{\mathcal{T}}^{ws}_p(\eta_\infty^\circ,\hat \eta_\infty^\circ)-\hat{P}_p(\eta_\infty^\circ,\hat \eta_\infty^\circ),\\
    \hat{\omega}_p(q,q')&:=\xi_\infty^\circ\cdot \hat \zeta_p(q).
\end{align*}

The following result is precisely the content of~\cite[Lemma 7.16]{GLRH}:
\begin{lemma}\label{lem_coarse}
    There exists a positive constant $\vartheta_p(q)>0$ such that $\zeta_p(q)=\vartheta_p(q)\cdot \hat{\zeta}_p(q)$. 
\end{lemma}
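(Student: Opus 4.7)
The plan is to reproduce the derivation of Proposition~\ref{prop_trois_un} in the coarse charts $\hat \Phi_p$ of Section~\ref{coarse_charts}, and then exploit the intrinsic character of the periods $T_n$ to compare with the formula already obtained in the linearizing charts. The $\mu^n$-coefficient in the asymptotic expansion of $T_n$ is a dynamical invariant, so any two such formulas must agree at that order, and this will pin down the relation between $\zeta_p(q)$ and $\hat\zeta_p(q)$.

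First I would redo Lemma~\ref{prel_est_xin} and Corollary~\ref{est_prop} in coarse coordinates. The new difficulty is that $\hat \Pi_p$ is not the linear map $\mathcal{L}$, only its derivative at $0$ is; however, by conditions (1)--(6) defining $\hat \Phi_p$, the four coordinate axes still correspond to the invariant local manifolds $W^{ss}_{\mathrm{loc}}(p)$, $W^{ws}_{\mathrm{loc}}(p)$, $W^{wu}_{\mathrm{loc}}(p)$, $W^{uu}_{\mathrm{loc}}(p)$. Hence backward iterates of $\hat \Phi_p^{-1}(q)$ remain on the unstable plane, and the precise contraction/expansion along the axes is recorded by the Jacobian cocycle of $D\hat\Pi_p^{\pm \ell}$. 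This is exactly why $\hat\mu_{p,q}(-\ell)$ enters the definition~\eqref{eq_series} of $\hat P_p$: it absorbs the deviation of the one-step weak-stable Jacobian from the scalar $\mu$. With these modifications, the estimates of Corollary~\ref{est_prop} hold with the same exponential rates, at the cost of positive multiplicative corrections coming from the cocycle.

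Second, I would adapt Corollary~\ref{princ_part_exp} and Lemma~\ref{asym_exp_bar-tau} to the coarse setting, using the $C^3$ return time $\hat \tau_p$ (which is as smooth as $\hat \Phi_p$ along the axes) and the coarse templates $\hat{\mathcal{T}}_p^{ws}$. The Taylor expansion of $\hat \tau_p$ around axis points proceeds exactly as in Corollary~\ref{princ_part_exp}, and the leading weak-stable discrepancy $p_n-q$ is controlled by $\xi_\infty$ up to a positive scalar $\vartheta_p(q)$ produced by the derivative of the chart change $\hat \Phi_p^{-1}\circ \Phi_p$ along the weak-stable direction, evaluated along the backward orbit of $q$. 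The key point to verify is that the \emph{same} scalar $\vartheta_p(q)$ multiplies both the $\hat{\mathcal{T}}_p^{ws}(q)$ and the $\hat P_p(q)$ contributions, which holds because both arise from differentiation in the weak-stable $\xi$-direction and thus transform identically under the chart change. Collecting terms gives
$$
T_n = nT + T' + \xi_\infty \cdot \vartheta_p(q) \cdot \hat\zeta_p(q) \cdot \mu^n + O(\theta^n).
$$

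Finally, by the intrinsic uniqueness of the $\mu^n$-coefficient in the asymptotic of $T_n$, comparing with Proposition~\ref{prop_trois_un} forces $\xi_\infty \zeta_p(q) = \xi_\infty \vartheta_p(q) \hat\zeta_p(q)$, i.e.\ $\zeta_p(q) = \vartheta_p(q) \hat\zeta_p(q)$. Positivity of $\vartheta_p(q)$ follows from its expression as a product of positive Jacobians of an orientation-preserving chart change. The main obstacle is the bookkeeping in Step~2: one must check that the nonlinearity of $\hat \Pi_p$ contributes only to the $O(\theta^n)$ error and to the cocycles already built into $\hat P_p$, and that the same factor $\vartheta_p(q)$ survives both in the ``past contribution'' $\hat P_p$ and in the excursion term $\hat{\mathcal{T}}_p^{ws}$, rather than producing two different constants whose discrepancy would spoil the identification. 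This is precisely the content of~\cite[Lemma~7.16]{GLRH}, which I would follow line by line.
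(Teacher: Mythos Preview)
The paper gives no proof of this lemma at all; it simply states that the result ``is precisely the content of~\cite[Lemma 7.16]{GLRH}'' and moves on. Your outline is a reasonable reconstruction of how such an argument could go, and you yourself ultimately defer to the same cited lemma, so your proposal is aligned with the paper.

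One small caveat on your route: the period-comparison argument only yields $\xi_\infty\,\zeta_p(q)=\xi_\infty\,\vartheta_p(q)\,\hat\zeta_p(q)$ for \emph{homoclinic} points $q$ paired with some $q'$ having $\xi_\infty\neq 0$, whereas the lemma (and Remark~\ref{rmk_coarse}) asserts the relation for all $q\in W^u_{X}(p)$ with a positive function $\vartheta_p$. To get this you need $\vartheta_p(q)$ defined \emph{a priori} as the chart-change derivative you indicate (so that it is manifestly positive and continuous), rather than as the ratio $\zeta_p/\hat\zeta_p$, and then density of homoclinic points plus continuity closes the argument. This is consistent with what you wrote, just worth making explicit.
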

\begin{remark}\label{rmk_coarse}
    In fact, in the exact same way, we can define two functions $\zeta_p,\hat \zeta_p$ along $W_X^u(p)$; moreover, Lemma~\cite[Lemma 7.16]{GLRH} guarantees that there exists a positive function $\vartheta_p\colon W_X^u(p)\to \mathbb{R}_+$ such that $\zeta_p=\vartheta_p\cdot \hat{\zeta}_p$. 
\end{remark}
\begin{corollary}\label{coro_coarse}
    The quantity $\hat{\omega}_p(q,q')$ is $>0$, resp. $=0$, resp. $<0$ if and only if the quantity $\omega_p(q,q')$ is $>0$, resp. $=0$, resp. $<0$. 
\end{corollary}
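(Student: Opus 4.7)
The plan is to reduce the asserted equivalence between the signs of $\omega_p(q,q')$ and $\hat\omega_p(q,q')$ to two separate sign comparisons: (i) $\mathrm{sgn}(\zeta_p(q)) = \mathrm{sgn}(\hat\zeta_p(q))$ and (ii) $\mathrm{sgn}(\xi_\infty) = \mathrm{sgn}(\xi_\infty^\circ)$. Comparison (i) is handed to us directly by Lemma~\ref{lem_coarse}, which supplies a strictly positive factor $\vartheta_p(q)$ with $\zeta_p(q) = \vartheta_p(q)\cdot\hat\zeta_p(q)$. Since $\omega_p(q,q') = \xi_\infty\cdot\zeta_p(q)$ and $\hat\omega_p(q,q') = \xi_\infty^\circ\cdot\hat\zeta_p(q)$, multiplying the two sign equalities then yields the desired conclusion.

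To establish (ii), I would analyze the transition map $\Psi := \hat\Phi_p^{-1}\circ\Phi_p$ restricted to the $(-1,1)^2$-slice parameterizing $W^s_{\mathrm{loc}}(p)$, and write $\Psi(\hat\xi,\xi) = (\psi_1(\hat\xi,\xi), \psi_2(\hat\xi,\xi))$. Both $\Phi_p$ and $\hat\Phi_p$ send the $\hat\xi$-axis onto $W^{ss}_{\mathrm{loc}}(p)$, so $\psi_2(\hat\xi,0) \equiv 0$; factoring $\psi_2(\hat\xi,\xi) = \xi\cdot g(\hat\xi,\xi)$ gives $\xi_\infty^\circ = \xi_\infty \cdot g(\hat\xi_\infty,\xi_\infty)$. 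Hence (ii) will follow once $g > 0$ is established on the whole stable slice.

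To get positivity of $g$, I would combine three observations. First, $g$ has no zero off the $\hat\xi$-axis: if $g(\hat\xi_0,\xi_0) = 0$ with $\xi_0\neq 0$, then $\psi_2(\hat\xi_0,\xi_0) = 0$, whereas the symmetric property that $\Psi^{-1}$ also sends $\{\xi=0\}$ into $\{\xi=0\}$ forces any preimage of the $\hat\xi$-axis to lie on the $\hat\xi$-axis, contradicting $\xi_0\neq 0$. Second, on the axis itself, $\det D\Psi(\hat\xi,0) = \partial_{\hat\xi}\psi_1(\hat\xi,0)\cdot g(\hat\xi,0)$ --- the cross term vanishing because $\partial_{\hat\xi}\psi_2|_{\xi=0} = 0$ --- so both factors, in particular $g(\hat\xi,0)$, are nonzero there. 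Third, fixing compatible orientations on the 1-dimensional subspaces $E_X^{ss}(p)$ and $E_X^{ws}(p)$ in the construction of $\hat\Phi_p$ --- a choice permitted by the flexibility of the coarse-chart definition in Subsection~\ref{coarse_charts} --- arranges $\partial_{\hat\xi}\psi_1(0,0) > 0$ and $g(0,0) > 0$. Continuity of $g$ together with the connectedness of the stable slice then forces $g > 0$ throughout.

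The main subtle point I anticipate lies not in the local computation but in the bookkeeping: the orientation conventions must be chosen continuously in the parameter $s$ when the corollary is later invoked along an isospectral deformation, so that the positivity of $g$ (and hence the sign equality) is uniform in $s$. This should however follow from the continuity of the hyperbolic splittings and of the coarse charts $\hat\Phi_{p_s}$ in $s$, as already arranged in Subsection~\ref{coarse_charts} and Remark~\ref{remark_coarse}.
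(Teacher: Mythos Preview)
Your proposal is correct and follows essentially the same approach as the paper: both proofs split the question into (i) $\mathrm{sgn}(\zeta_p(q))=\mathrm{sgn}(\hat\zeta_p(q))$, which is immediate from Lemma~\ref{lem_coarse}, and (ii) $\mathrm{sgn}(\xi_\infty)=\mathrm{sgn}(\xi_\infty^\circ)$. The paper dispatches (ii) in one sentence --- $\xi_\infty$ and $\xi_\infty^\circ$ both encode which side of $W^{ss}_{X,\mathrm{loc}}(p)$ the point $q'$ lies on --- whereas your transition-map computation and orientation bookkeeping spell out exactly this geometric fact in coordinates; in particular, your observation that compatible orientations must be (and can be) fixed is implicit in the paper's terse argument.
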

\begin{proof}
    It follows directly from Lemma~\ref{lem_coarse}; indeed, $\xi_\infty$ and $\xi_\infty^\circ$ have the same sign, since they encode the position of $q'\in W_{X,\mathrm{loc}}^s(p)$ relative to $W_{X,\mathrm{loc}}^{ss}(p)$ in the respective charts $\Phi_p$, $\hat\Phi_p$. 
\end{proof}


\subsection{Non-vanishing of the leading term after a $C^\infty$ perturbation}\label{section_nonvanish}

\begin{proposition}\label{prop_trois_quatre}
    Fix an Anosov diffeomorphism $\bar F$, a center-expanding periodic point $p$ for the suspension flow as above. Fix any 
    $(\eta,\hat\eta)\in(-1,1)^2$ such that $q=\hat\Phi_p(0,0,\eta,\hat\eta)$ is homoclinic. For any $\varepsilon>0$, there exists a $C^\infty$ diffeomorphism $\hat F$, $d_{C^\infty}(\bar F,\hat F)<\varepsilon$, and $r>0$, such that for any $C^\infty$ diffeomorphism $G$ satisfying  $d_{C^2}(G,\hat F)<r$, denoting by $Y^t$ the suspension flow of $G$, and by $p^G$ the  continuation of $p$, the functions $\hat{\mathcal{T}}_{p^G}^{ws}$ and $\hat{P}_{p^G}$ associated to $Y^t$  (in the coarse chart $\hat{\Phi}_{p ^G}$) satisfy
$$
\hat{\mathcal{T}}_{p^G}^{ws}(\eta',\hat \eta')-\hat{P}_{p^G}(\eta',\hat \eta')\neq 0,\quad \forall\, (\eta',\hat \eta')\in B\big((\eta,\hat\eta),r\big). 
$$
\end{proposition}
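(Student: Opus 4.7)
The plan combines a continuity reduction with a carefully localized $C^\infty$ perturbation of $\bar F$. Writing $\hat\zeta_{p^G}(\eta',\hat\eta'):=\hat{\mathcal{T}}^{ws}_{p^G}(\eta',\hat\eta')-\hat{P}_{p^G}(\eta',\hat\eta')$, I would first verify that $(G,(\eta',\hat\eta'))\mapsto\hat\zeta_{p^G}(\eta',\hat\eta')$ is jointly continuous on a $C^2$-neighborhood of $\bar F$ times a neighborhood of $(\eta,\hat\eta)$. The template $\hat{\mathcal{T}}^{ws}_{p^G}$ depends continuously on $G$ in $C^2$ topology, since the stable bundle of the suspension flow $Y^t$ does and the coarse chart $\hat\Phi_{p^G}$ may be chosen continuously in $C^1$ by Remark~\ref{remark_coarse}. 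For the series~\eqref{eq_series}, each summand is continuous in $G$, and, as in the remark following Proposition~\ref{prop_trois_un} (transferred to the coarse chart via the $C^2$ regularity of $\hat\tau_{p^G}$), the general term decays at least like $(\mu\lambda)^{-\ell}$ with constants uniform on a small $C^2$-neighborhood of $\bar F$, so the series converges uniformly to a continuous limit. It therefore suffices to produce a single $C^\infty$-small perturbation $\hat F$ of $\bar F$ with $\hat\zeta_{p^{\hat F}}(\eta,\hat\eta)\neq 0$.

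Assume $\hat\zeta_{p^{\bar F}}(\eta,\hat\eta)=0$ (otherwise $\hat F=\bar F$ works). Fix a moderate integer $N_0\geq 1$, set $x_{N_0}:=\hat\Phi_p\bigl(\hat\Pi_p^{-N_0}(0,0,\eta,\hat\eta)\bigr)\in\Sigma_p\cap W^u_{\mathrm{loc}}(p)$, and let $\bar x_{N_0}\in\mathbb{T}^4$ be its projection. Since the $\bar F$-orbit of $\pi(q)$ is a discrete subset of $\mathbb{T}^4$ accumulating only at $\pi(p)$, after possibly adjusting $N_0$ we may assume $\bar x_{N_0}\neq \pi(p)$ and pick a small neighborhood $V\subset\mathbb{T}^4$ of $\bar x_{N_0}$ disjoint from $\pi(p)$ such that the $\bar F$-orbit of $\pi(q)$ traverses $V$ exactly once. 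Consider the family $\bar F_s:=\bar F+s\psi$ for $s\in(-\varepsilon_0,\varepsilon_0)$, where $\psi$ is a $C^\infty$ vector field supported in $V$, with its profile to be chosen. Because the support of $\psi$ avoids a neighborhood of $\pi(p)$ as well as the forward $\bar F$-orbit of $\pi(q)$, the periodic orbit of $p$, its linearization data, its coarse chart, and the entire forward orbit of $q$ remain unchanged; in particular, $E^{ws}(q)$ is unchanged, so $\hat{\mathcal{T}}^{ws}_{p^{\bar F_s}}(\eta,\hat\eta)$ is constant in $s$, and only the summand with $\ell=N_0$ in~\eqref{eq_series} is affected to first order. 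Hence
\[
\tfrac{d}{ds}\Big|_{s=0}\hat\zeta_{p^{\bar F_s}}(\eta,\hat\eta)\;=\;\hat\mu_{p,q}(-N_0)\cdot \partial_2(\delta\hat\tau_p)(x_{N_0}),
\]
where $\delta\hat\tau_p$ denotes the first-order variation of the Poincaré return time induced by $\psi$.

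The main obstacle is to exhibit $\psi$ for which $\partial_2(\delta\hat\tau_p)(x_{N_0})\neq 0$. Via the suspension with roof function $K+\log J_s$, the variation $\delta\hat\tau_p(x)$ at $x\in\Sigma_p$ close to $x_{N_0}$ is essentially the pullback of the first-order variation of $\log J_s$ along the flow segment passing over $V$; in particular, its dependence on $x$ inside the coarse chart at $p$ inherits the one-jet of $\log J_s$ at $\bar x_{N_0}$. Since $\psi$ is a free $C^\infty$ perturbation supported in $V$, one can realize any prescribed linear part of $\log J_s$ at $\bar x_{N_0}$, and in particular a nonzero derivative in the weak-stable $\xi$-direction, which yields $\partial_2(\delta\hat\tau_p)(x_{N_0})\neq 0$. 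For such $\psi$ and small $s>0$ we obtain $\hat F:=\bar F_s$ with $\hat\zeta_{p^{\hat F}}(\eta,\hat\eta)\neq 0$; combined with the first paragraph, this gives the proposition.
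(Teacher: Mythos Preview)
Your proposal is correct and follows essentially the same two-step strategy as the paper: first reduce to a pointwise non-vanishing via joint continuity of $(G,(\eta',\hat\eta'))\mapsto\hat\zeta_{p^G}(\eta',\hat\eta')$ in the $C^2$ topology (the paper's Lemma~\ref{lemme_stable_nonzero}), then perform a localized perturbation at a backward iterate of the homoclinic point to move a single term of the series $\hat P_p$ while leaving the template $\hat{\mathcal T}^{ws}_p(\eta,\hat\eta)$ fixed (the paper's Lemma~\ref{lemma_3.12}). The paper uses the first preimage ($N_0=1$) and you allow a general $N_0$, but the mechanism is identical: the template is determined by the forward orbit of $q$ and the coarse chart (both untouched by the support of the perturbation), while the only affected summand of $\hat P_p$ is the one whose base point lies in the perturbed neighborhood, and one perturbs the $\xi$-derivative of the logarithmic Jacobian there.

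Two small points worth tightening. First, what fixes the template is the full stable plane $E^s_X(q)$ of the suspension flow (cf.\ \eqref{template-def}), not $E^{ws}(q)$; your argument is unaffected since $E^s_X(q)$ depends only on the forward orbit. Second, your displayed variation formula implicitly uses that the coefficients $\hat\mu_{p,q}(-\ell)$ and the points $\hat\Pi_p^{-\ell}(q)$ are unchanged for all $\ell$, which requires the perturbation to preserve the first jet of $\bar F$ along $W^u_{\bar F,\mathrm{loc}}(p)$ (the paper states this explicitly). This is compatible with prescribing the linear part of $\log J_s$ at $\bar x_{N_0}$, since the latter is governed by the second jet of $\psi$; you should say so.
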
 
The notation for the diffeomorphisms $\bar F$, $\hat F$ and $G$ in Proposition~\ref{prop_trois_quatre} is consistent with the notation in Proposition~\ref{prop_main}. 
\begin{remark}
    In particular, for any diffeomorphism $G$ which is sufficiently $C^2$-close to $\hat F$, for any homoclinic point $q^G=\Phi_p^G(0,0,\eta_\infty,\hat\eta_\infty)=\hat{\Phi}_p^G(0,0,\eta_\infty^\circ,\hat\eta_\infty^\circ)$ with $(\eta_\infty^\circ,\hat\eta_\infty^\circ) \in B\big((\eta,\hat\eta),r\big)$, Proposition~\ref{prop_trois_quatre} ensures that $\hat{\zeta}_p(q)=\hat{\mathcal{T}}_{p^G}^{ws}(\eta_\infty^\circ,\hat \eta_\infty^\circ)-\hat{P}_{p^G}(\eta_\infty^\circ,\hat \eta_\infty^\circ)\neq 0$, hence also, by Lemma~\ref{lem_coarse},  $\zeta_p(q)=\mathcal{T}_{p^G}^{ws}(\eta_\infty,\hat \eta_\infty)-P_{p^G}(\eta_\infty,\hat \eta_\infty)\neq 0$. Consequently, in the asymptotic expansion obtained in  Proposition~\ref{prop_trois_un}, the coefficient $\omega_p(q,q')=\xi_\infty\big(\mathcal{T}_{p^G}^{ws}(\eta_\infty,\hat \eta_\infty)-P_{p^G}(\eta_\infty,\hat \eta_\infty)\big)$ by the leading exponential term  is non-zero unless $\xi_\infty=0$. 
\end{remark}

We split the proof of this proposition into two lemmata, showing respectively the $C^\infty$-density and $C^2$-openness of the above property. 

\begin{lemma}\label{lemma_3.12}
There exists a $C^\infty$-small perturbation $\hat F$ of $\bar F$ such that the functions $\hat{\mathcal{T}}_{p, \hat F}^{ws}$ and $\hat{P}_{p, \hat F}$ associated to the suspension flow of $\hat F$ (in the coarse chart $\hat{\Phi}_{ p}$) satisfy
$$
\hat{\mathcal{T}}_{p, \hat F}^{ws}(\eta,\hat \eta)-\hat{P}_{p,\hat F}(\eta,\hat \eta)\neq 0.
$$  
\end{lemma}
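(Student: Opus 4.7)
If $\hat{\mathcal T}_{p,\bar F}^{ws}(\eta,\hat\eta)-\hat P_{p,\bar F}(\eta,\hat\eta)\neq 0$ the claim is immediate with $\hat F=\bar F$, so I will assume the difference vanishes for $\bar F$. My plan is to construct a $C^\infty$-small one-parameter family $\{F_t\}_{t\in(-\varepsilon,\varepsilon)}$, $F_0=\bar F$, supported in an arbitrarily small neighborhood $V\subset\T^4$ of a single point on the backward $\bar F$-orbit of the projection $\bar q\in\T^4$ of the homoclinic point $q$. The design of $V$ and of the perturbation will guarantee that exactly one summand of the series~\eqref{eq_series} for $\hat P_p(\eta,\hat\eta)$ depends on $t$, that $\hat{\mathcal T}_p^{ws}(\eta,\hat\eta)$ does not depend on $t$ at all, and that the $t$-dependence of the remaining summand is nondegenerate.

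Since $\bar q$ is homoclinic but not periodic, its $\bar F$-orbit is a bi-infinite sequence of pairwise distinct points accumulating only on the periodic orbit of $p$; removing the latter, this orbit is discrete. Fix $\ell_0\geq 1$ and choose $V\subset\T^4$ to be a small neighborhood of $\bar F^{-\ell_0}(\bar q)$ disjoint from the periodic orbit of $p$, from the forward orbit $\{\bar F^j(\bar q)\}_{j\geq 0}$, and from the other backward iterates $\{\bar F^{-\ell}(\bar q)\}_{\ell\neq \ell_0}$. The perturbations $F_t$ are then chosen $C^\infty$-small and supported in $V$. The periodic orbit $p$ persists with identical periodic data under the perturbation, and by shrinking $V$ further if necessary the fiber projection of the transversal $\Sigma_p$ to $\T^4$ can be made disjoint from $V$, so that the coarse chart $\hat\Phi_p$, the Poincar\'e return map $\hat\Pi_p$, and the return time $\hat\tau_p$ near the orbit of $p$ are all unperturbed.

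The localization of $F_t$ has two key consequences. First, because the forward orbit of $q$ avoids the fibers over $V$, and because the stable distribution $E_{Y^t}^s(q)$ at $q=\hat\Phi_p(0,0,\eta,\hat\eta)$ is determined solely by the forward dynamics of the suspension flow $Y^t$ along the orbit of $q$, this distribution is independent of $t$; hence $\hat{\mathcal T}_{p,F_t}^{ws}(\eta,\hat\eta)$ is constant in $t$. Second, among the backward Poincar\'e iterates $\hat\Pi_p^{-\ell}(\hat\Phi_p^{-1}(q))$, $\ell\geq 1$, only the one at $\ell=\ell_0$ corresponds to a piece of orbit passing through a fiber over $V$. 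In the series~\eqref{eq_series} this means every summand except the $\ell_0$-th one coincides with its $\bar F$ value, while the $\ell_0$-th summand $-\hat\mu_{p,q}(-\ell_0)\,\partial_2\hat\tau_{p,F_t}\bigl(\hat\Pi_p^{-\ell_0}(\hat\Phi_p^{-1}(q))\bigr)$ depends on $t$ only through the factor $\partial_2\hat\tau_{p,F_t}(\cdot)$.

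The remaining step is to arrange that $t\mapsto \partial_2\hat\tau_{p,F_t}\bigl(\hat\Pi_p^{-\ell_0}(\hat\Phi_p^{-1}(q))\bigr)$ has nonzero derivative at $t=0$. The return time is an integral of the roof function $K+\log J_{F_t}$ along the orbit piece between two consecutive returns to $\Sigma_p$, so its $\xi$-directional variation at the $\ell_0$-th backward iterate responds linearly, to first order in $t$, to the $\xi$-directional variation of $\log J_{F_t}$ on $V$. I will exploit the ample freedom in prescribing the $2$-jet of $F_t$ on $V$ subject to $C^\infty$-smallness to arrange a nondegenerate mixed-directional bump of $\log J_{F_t}$ in $V$, producing a nonzero derivative. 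Setting $\hat F:=F_t$ for any small $t\neq 0$ then yields $\hat{\mathcal T}_{p,\hat F}^{ws}(\eta,\hat\eta)-\hat P_{p,\hat F}(\eta,\hat\eta)\neq 0$. The main obstacle here is the concrete infinitesimal calculation verifying that an isolated $\xi$-directional perturbation of $\log J$ translates, via the suspension construction, into a genuine variation of $\partial_2\hat\tau_p$ at the prescribed backward iterate; this is delicate because the coarse chart depends only continuously in $C^1$ on $F_t$, so one must bookkeep carefully how the chart, the return map, and the roof interact under a perturbation localized along one specific Poincar\'e preimage.
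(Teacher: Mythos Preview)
Your proposal is correct and follows the paper's own approach: localize the perturbation at a backward iterate of $q$ so that the template (determined by the forward orbit of $q$) and the coarse chart (determined only by the invariant manifolds through $p$) are unchanged, then perturb the second jet of the Jacobian there to move one term of the series $\hat P_p$. The paper makes one point you leave implicit: to ensure that \emph{only} the $\ell_0$-th summand moves---in particular that all the coefficients $\hat\mu_{p,q}(-\ell)$ are frozen, including for $\ell>\ell_0$---the perturbation should preserve the first jet along $W^u_{X,\mathrm{loc}}(p)$ and alter only the second jet; this also dissolves your worry at the end, since with that constraint the coarse chart is literally identical for $\bar F$ and $\hat F$ and no bookkeeping of its $t$-dependence is needed.
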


\begin{proof}[Sketch of the proof] We only explain the basic mechanism of non-vanishing without carrying out detailed computations. This is because the perturbative argument is rather standard, in particular it is used to show that strong stable distribution of an Anosov flows is typically not $C^1$ regular, see, e.g.~\cite[Lemma 7.12]{GLRH}.

The perturbation $\hat F$ will be localized at a homoclinic point which corresponds to the preimage of the homoclinic point, i.e., $\hat\Pi^{-1}(0,0,\eta, \hat\eta)$. Further, the perturbation can be arranged so that  the periodic point $p$ and all invariant manifolds through $p$ (and dynamics on them) remain the same for $\hat F$. Then we can use exactly the same coarse chart $\hat\Phi_p$ for both $\bar F$ and $\hat F$ because it is only adapted with respect to these invariant manifolds.

The template $\hat\cT^{ws}_p(\eta,\hat\eta)$ is defined via the chart and the stable subspace $E_X^s(q)=E_X^s(\hat\Phi_p(0,0,\eta,\hat\eta))$. Hence, $\hat\cT^{ws}_p(\eta,\hat\eta)$ is completely determined by the future of $q$ and the chart $\hat\Phi_p$. Since the forward orbit of $q$ is asymptotic to $p$, it never intersects a small neighborhood of $\hat\Phi_p(\hat\Pi^{-1}(0,0,\eta, \hat\eta))$, the stable manifold for such localized perturbation at $q$ remains the same; and since we are still using the same chart, we conclude that the template remains exactly the same for the perturbation $\hat F$:
$$
\hat\cT^{ws}_{p, \bar F}(\eta,\hat\eta)=\hat\cT^{ws}_{p,\hat F}(\eta,\hat\eta).
$$

Now recall that $\hat{P}_{p}$ is given by the series~\eqref{eq_series} which go into the past along the local unstable manifold of $p$. The only term of these series which could change under such localized perturbation is
$$ 
\hat\mu_{p,q}(-1)\partial_2\hat\tau(\hat\Pi^{-1}_p(0,0,\eta,\hat\eta)).
$$
Indeed, the value of this first term can be easily changed by a $C^\infty$-small localized perturbation. For example, we can keep the first jet along the $W^u_{X,\mathrm{loc}}(p)$ the same, hence $\hat\mu_{p,q}(-1)$ will stay the same for $\hat F$. At the same time we can perturb the second jet at $\hat\Pi^{-1}_p(0,0,\eta,\hat\eta)$ changing the derivative of Jacobian at this point with respect to $\xi$, thus changing the value of $\partial_2\hat\tau(\hat\Pi^{-1}_p(0,0,\eta,\hat\eta))
$ (indeed, what we are actually perturbing here is the Jacobian of the diffeomorphism $\bar F$, because the roof function is the logarithmic Jacobian $+\,\mathrm{const}$).
\end{proof}
\begin{remark}
    Note that usage of coarse chart (and corresponding template and series $\hat P_p$) is really helpful for making the perturbation. Otherwise, if we use linearizing chart $\Phi_p$, then perturbing the Jacobian would necessarily result in perturbation of the chart (because it needs to be linearizing for the perturbation), which makes the calculation very delicate.
\end{remark}

\begin{lemma}\label{lemme_stable_nonzero}
Assume that the diffeomorphism $\hat F$ and its associated suspension flow  satisfy
$
\hat{\mathcal{T}}_{\hat p}^{ws}(\hat q)-\hat{P}_{\hat p}(\hat q)\neq 0
$.
Then the same holds for any $C^2$-small perturbation $G$ of $\hat F$, and for all $(\eta',\hat \eta')$ sufficiently close to $(\eta,\hat \eta)$, namely, the functions $\hat{\mathcal{T}}_{p^G}^{ws}$ and $\hat{P}_{p^G}$ for the associated suspension flow of $G$ satisfy 
$$ 
\hat{\mathcal{T}}_{p^G}^{ws}(\eta',\hat \eta')-\hat{P}_{p^G}(\eta',\hat \eta')\neq 0. 
$$
\end{lemma}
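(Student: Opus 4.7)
The plan is to derive Lemma~\ref{lemme_stable_nonzero} as an instance of the general fact that nonvanishing is a $C^0$-open condition, once we establish that the map
\[
(G,(\eta',\hat\eta')) \mapsto \hat{\mathcal T}_{p^G}^{ws}(\eta',\hat\eta')-\hat{P}_{p^G}(\eta',\hat\eta')
\]
is jointly continuous on a product of a $C^2$-neighborhood of $\hat F$ and a neighborhood of $(\eta,\hat\eta)$. The nonvanishing at $(\hat F,(\eta,\hat\eta))$ then supplies the required $r>0$. The whole purpose of the coarse charts introduced in Subsection~\ref{coarse_charts} (cf.\ Remark~\ref{remark_coarse}) is that a family $\hat\Phi_{p^G}$ can be fixed once and for all which depends continuously in $C^1$ topology on the $C^2$ perturbation $G$, so I begin by fixing such a family.

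For the template term, by~\eqref{template-def}, $\hat{\mathcal T}_{p^G}^{ws}(\eta',\hat\eta')$ is extracted as an angular coordinate, read in the $C^1$-varying coarse chart $\hat\Phi_{p^G}$, of the stable plane $E_Y^s$ at the base point $\hat\Phi_{p^G}(0,0,\eta',\hat\eta')$. Joint continuity of $(G,(\eta',\hat\eta'))\mapsto \hat{\mathcal T}_{p^G}^{ws}(\eta',\hat\eta')$ then follows from the $C^1$-continuity of $\hat\Phi_{p^G}$ and the classical fact that the dominated splitting of an Anosov flow varies continuously in the $C^0$ sense when the flow varies in $C^1$.

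For the series $\hat P_{p^G}$ defined in~\eqref{eq_series}, joint continuity of each $\ell$-th term in $(G,(\eta',\hat\eta'))$ follows likewise from $C^1$-continuity of $\hat\Phi_{p^G}$, of the Poincar\'e map $\hat\Pi_{p^G}$, of the first return time $\hat\tau_{p^G}$, and of the Jacobian factor $\hat\mu_{p^G,q^G}(-\ell)$. To promote pointwise to uniform convergence I would appeal to the Weierstrass M-test, which requires a uniform summable bound on the terms. The key observation is that since $p^G$ stays center-expanding for $G$ in a sufficiently small $C^2$-neighborhood of $\hat F$, the eigenvalues $\hat\mu_G<\mu_G<\lambda_G<\hat\lambda_G$ at $p^G$ lie in arbitrarily small neighborhoods of $\hat\mu<\mu<\lambda<\hat\lambda$ with $\mu_G\lambda_G>1$ uniformly. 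Moreover, by construction of the coarse chart one still has $\hat\tau_{p^G}(0,0,\cdot,\cdot)\equiv T_G$ along the unstable plane, so $\partial_2\hat\tau_{p^G}$ vanishes there. Taylor-expanding $\partial_2\hat\tau_{p^G}$ at the backward iterate $\hat\Pi_{p^G}^{-\ell}(\hat\Phi_{p^G}^{-1}(q^G))$, whose unstable coordinates decay uniformly at rate $\lambda^{-\ell}$, and combining with $\hat\mu_{p^G,q^G}(-\ell)=O(\mu^{-\ell})$, produces a bound of the form $(\mu\lambda)^{-(1-\varepsilon)\ell}$ which is summable for small $\varepsilon$; this is the same mechanism behind the convergence argument in the remark following Proposition~\ref{prop_trois_un}.

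The main technical obstacle is checking that every constant appearing in the Taylor estimate (the geometric rate of backward contraction along $W^u_{\mathrm{loc}}(p^G)$, the size of the second derivatives of $\hat\tau_{p^G}$, the distortion between the coarse chart and its linearizing counterpart) depends only on $C^2$ data of $G$ and can therefore be taken uniform over a $C^2$-neighborhood of $\hat F$. This rests on the continuity statement in Remark~\ref{remark_coarse} together with routine compactness arguments on compact pieces of the invariant manifolds through $p^G$. Once joint continuity of both $\hat{\mathcal T}_{p^G}^{ws}$ and $\hat P_{p^G}$ is in hand, openness of the nonvanishing condition finishes the proof.
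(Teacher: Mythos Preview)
Your proposal is correct and follows essentially the same approach as the paper: reduce to joint continuity of $(G,(\eta',\hat\eta'))\mapsto \hat{\mathcal T}_{p^G}^{ws}(\eta',\hat\eta')-\hat{P}_{p^G}(\eta',\hat\eta')$ in the $C^2$ topology on $G$ and invoke openness of the nonvanishing locus. The paper's own proof is a two-paragraph sketch that merely asserts this continuity, so your version---with the explicit $M$-test argument for the series~\eqref{eq_series} and the use of the center-expanding condition for summability---is if anything more thorough than what the paper provides.
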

\begin{proof}
Recall that the coarser charts $\hat{\Phi}_{p^G}$ depend continuously on the flow in $C^1$ topology, and, accordingly, on the diffeomorphism in $C^2$ topology. 
The template $\hat{\mathcal{T}}_{p^G}^{ws}$ is just a particular coordinate (relative to a continuously varying chart) 
of the stable distribution which depends continuously on the flow in $C^1$ topology on the space of flows (hence, in $C^2$ topology on $G$); and $\hat{P}_{p^G}$ is given by an infinite series involving a first order derivative of the roof function (as in Subsection~\ref{sec_reinterpret}) which is given by the logarithm of the Jacobian of $G$, hence also varies continuously in $C^2$ topology with $G$. Thus, it is clear that the map $(G,(\eta',\hat \eta'))\mapsto \hat\zeta_{p^G}(\eta',\hat \eta')=\hat{\mathcal{T}}_{p^G}^{ws}(\eta',\hat \eta')-\hat{P}_{p^G}
(\eta',\hat \eta')$ 
is continuous in $C^2$ topology on $G$.
\end{proof}

\subsection{Arriving at a contradiction via particular choices of a homoclinic point and parameters}

In this section, we conclude the proof of the main results, i.e., Theorem~\ref{main_theorem} and Theorem~\ref{main_theorem_bis}. 
We start with the proof of the deformation rigidity of typical perturbation of de la Llave examples, and then explain how it can be adapted to show the perturbative rigidity result. 

According to the discussion in Section~\ref{subs_geometric_mechanism}, all that is left to do is to prove Propositions~\ref{prop_main}-\ref{prop_main_tilda}, that is, verifying that given ``generic'' isospectral diffeomorphisms near de la Llave's examples, the conjugacy between them preserves the strong stable/unstable manifolds at some periodic point. This follows immediately from Proposition~\ref{prop_trois_quatre} and the following propositions.

\begin{proposition}\label{prop_main_tech_match}
Consider an Anosov diffeomorphism $G$, a center-expanding periodic point $p^G=Y^T(p^G)$ for its suspension flow $Y^t$, and $(\eta,\hat \eta)\in (-1,1)^2$, $r>0$, as in Proposition~\ref{prop_trois_quatre}, 
such that the functions $\hat{\mathcal{T}}_{p^G}^{ws}$ and $\hat{P}_{p^G}$ associated to $Y^t$ satisfy 
$$
\hat{\mathcal{T}}_{p^G}^{ws}(\eta',\hat \eta')-\hat{P}_{p^G}(\eta',\hat \eta')\neq 0,\quad \forall\, (\eta',\hat \eta')\in B\big((\eta,\hat\eta),r\big). 
$$   
Then for any isospectral deformation $\{G_s\}_{s \in [0,1]}$ based at $G_0=G$, we have $h_s(W_{G, \mathrm{loc}}^{ss}(p^G))= W_{G_s,\mathrm{loc}}^{ss}(p_s^G)$, where $h_s$ is the conjugacy map between $G$ and $G_s$, and $p_s^G$ is the continuation of $p^G$ for $G_s$.  
\end{proposition}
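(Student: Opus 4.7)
The plan is to use the period asymptotics of Proposition~\ref{prop_trois_un} along shadowing periodic orbits to constrain the flow conjugacy $H_s$ between the suspension flows $Y^t$ and $Y_s^t$, and then descend to the diffeomorphism conjugacy $h_s$. I first work with $H_s$ from the Livshits-type construction of Subsection~\ref{subs_localcoords}: it preserves periodic orbits together with their periods, and by the flow analogue of Lemma~\ref{lemma2} it preserves the weak (un)stable foliations. So it suffices to show that $H_s$ carries $W^{ss}_{Y,\mathrm{loc}}(p^G)$ onto $W^{ss}_{Y_s,\mathrm{loc}}(p^G_s)$; the statement for $h_s$ follows by restriction to the Poincar\'e section $\Sigma_{p^G}$.

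The key identity is extracted as follows. Given any homoclinic $q$ of $p^G$ with coarse-chart coordinates $\hat\Phi_{p^G}^{-1}(q)=(0,0,\eta_\infty^\circ,\hat\eta_\infty^\circ)\in \{(0,0)\}\times B((\eta,\hat\eta),r)$ and return $q'=Y^{T'}(q)\in W^s_{Y,\mathrm{loc}}(p^G)$ with $\hat\Phi_{p^G}^{-1}(q')=(\hat\xi_\infty^\circ,\xi_\infty^\circ,0,0)$, the shadowing periodic orbits $(p_n)$ from Lemma~\ref{lemme_shadow} have periods $T_n$ whose expansion from Proposition~\ref{prop_trois_un}, transferred to coarse charts via Lemma~\ref{lem_coarse} and Corollary~\ref{coro_coarse}, has leading $\mu^n$ term carrying the same sign as $\hat\omega_{p^G}^G(q,q')=\xi_\infty^\circ\cdot\hat\zeta_{p^G}^G(q)$. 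Since $H_s$ is a flow conjugacy preserving periods, $H_s(p_n)$ is the shadowing periodic orbit of $Y_s^t$ associated with $H_s(q)$, $H_s(q')=Y_s^{T'}(H_s(q))$ and $p^G_s=H_s(p^G)$, with the same period $T_n$. Running Proposition~\ref{prop_trois_un} on the $G_s$-side and matching the $\mu^n$-coefficients gives
\begin{equation*}
\xi_\infty^\circ\cdot\hat\zeta_{p^G}^G(q) \;=\; (\xi_\infty^\circ)^{G_s}\cdot\hat\zeta_{p^G_s}^{G_s}(H_s(q)),
\end{equation*}
where $(\xi_\infty^\circ)^{G_s}$ denotes the coarse $\xi$-coordinate of $H_s(q')$ in $\hat\Phi_{p^G_s}$.

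By the hypothesis, $\hat\zeta_{p^G}^G(q)\neq 0$ for all such $q$. I then read off the geometric information in two complementary cases. If $\xi_\infty^\circ\neq 0$, i.e., $q'\notin W^{ss}_{Y,\mathrm{loc}}(p^G)$, both sides of the identity are nonzero, so $\hat\zeta_{p^G_s}^{G_s}(H_s(q))\neq 0$ and $(\xi_\infty^\circ)^{G_s}\neq 0$, that is, $H_s(q')\notin W^{ss}_{Y_s,\mathrm{loc}}(p^G_s)$. Dually, for homoclinic $q$ in the ball with $q'\in W^{ss}_{Y,\mathrm{loc}}(p^G)$, the identity becomes $0=(\xi_\infty^\circ)^{G_s}\cdot\hat\zeta_{p^G_s}^{G_s}(H_s(q))$; at those $q$ for which $\hat\zeta_{p^G_s}^{G_s}(H_s(q))\neq 0$ (which is generic by the previous case) we deduce $(\xi_\infty^\circ)^{G_s}=0$, i.e., $H_s(q')\in W^{ss}_{Y_s,\mathrm{loc}}(p^G_s)$. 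Since homoclinic returns $q'\in W^{ss}_{Y,\mathrm{loc}}(p^G)$ coming from $q$ in the ball are dense in $W^{ss}_{Y,\mathrm{loc}}(p^G)$, a dense subset of $W^{ss}_{Y,\mathrm{loc}}(p^G)$ is sent by $H_s$ into $W^{ss}_{Y_s,\mathrm{loc}}(p^G_s)$; continuity of $H_s$ then gives $H_s(W^{ss}_{Y,\mathrm{loc}}(p^G))\subset W^{ss}_{Y_s,\mathrm{loc}}(p^G_s)$, and equality follows because $H_s$ restricts to a homeomorphism of the full $2$-dimensional stable leaves and both strong stable leaves are $1$-dimensional curves through $p^G_s$.

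The main obstacle will be controlling the exceptional set where $\hat\zeta_{p^G_s}^{G_s}\circ H_s$ vanishes at ``target'' homoclinics with $q'\in W^{ss}_{Y,\mathrm{loc}}(p^G)$: the identity only yields $(\xi_\infty^\circ)^{G_s}=0$ \emph{provided} that quantity is nonzero, and although the Case~$\xi_\infty^\circ\neq 0$ analysis guarantees non-vanishing on an open dense subset of the ball in $G$-coordinates, ensuring that non-vanishing holds on a sufficiently rich subfamily of homoclinic returns landing on $W^{ss}_{Y,\mathrm{loc}}(p^G)$ for the continuity argument to carry through requires a delicate use of the $C^2$-continuity in $G_s$ of the coarse-chart templates and of $\hat P_{p^G_s}$ (cf.\ Lemma~\ref{lemme_stable_nonzero}), together with the density of generic homoclinic returns.
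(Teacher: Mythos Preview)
Your argument has a genuine gap, and it is precisely the one you flag at the end. The identity you obtain from matching periods (which, incidentally, is an equality of the \emph{linearizing}-chart coefficients $\omega$, not of the coarse $\hat\omega$; Corollary~\ref{coro_coarse} only transfers signs) is
\[
\omega_{p^G}(q,q')=\omega_{p^G_s}(H_s(q),H_s(q')).
\]
In your Case~2 ($q'\in W^{ss}_{Y,\mathrm{loc}}(p^G)$) the left side vanishes, and to deduce $(\xi_\infty^\circ)^{G_s}=0$ you need $\hat\zeta^{G_s}_{p^G_s}(H_s(q))\neq 0$. Case~1 only provides this non-vanishing at homoclinics $q$ whose return $q'$ lies \emph{off} $W^{ss}$; there is no reason the continuous function $\hat\zeta^{G_s}_{p^G_s}\circ H_s$ cannot vanish exactly on the codimension-one locus of Case~2 homoclinics you need. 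Appealing to Lemma~\ref{lemme_stable_nonzero} does not help: that lemma requires $G_s$ to be $C^2$-close to $G$, whereas an isospectral deformation is allowed to leave any fixed $C^2$-neighborhood of $G$ (this is precisely the difficulty flagged in the paper's remark following Theorems~\ref{main_theorem} and~\ref{main_theorem_bis}).

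The paper's proof sidesteps this obstacle by a different mechanism that exploits the parameter $s$ rather than fixing it. Assume for contradiction that $H_{s_0}$ fails to send $W^{ss}_{Y,\mathrm{loc}}(p^G)$ to $W^{ss}_{Y_{s_0},\mathrm{loc}}(p^G_{s_0})$; then some point of $W^{ss}_{Y,\mathrm{loc}}(p^G)$ is mapped to a point with coarse $\xi$-coordinate, say, $>0$. Choose a homoclinic $q$ in the ball whose return $q'$ is slightly \emph{off} $W^{ss}$ on the other side, i.e., with $\xi_\infty^\circ<0$. Since $H_0=\mathrm{id}$, the continuous map $s\mapsto \xi_\infty^\circ(s)$ starts negative near $s=0$ and is positive near $s_0$; by the intermediate value theorem there exists $s'\in(0,s_0)$ with $\xi_\infty^\circ(s')=0$, hence $\omega_{p^G_{s'}}(q_{s'},q'_{s'})=0$. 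But this must equal $\omega_{p^G}(q,q')$, which is nonzero because the hypothesis gives $\hat\zeta_{p^G}^G(q)\neq 0$ and $\xi_\infty^\circ<0$. This contradiction uses the non-vanishing hypothesis only at $s=0$, never at the uncontrolled parameter $s'$, which is exactly what your approach was missing.
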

\begin{proof}
In the following, we slightly abuse notation and identify periodic points $p_s^G$ for the diffeomorphisms $G_s$ with the associated periodic points for the flows $Y_s^t$. 
For each $s \in [0,1]$, we abbreviate as $\Sigma_s=\Sigma_{p_s^G}$ a transversal for the suspension flow $Y_s^t$ of $G_s$ containing the local manifolds $W_{Y_s^t,\mathrm{loc}}^s(p_s^G)$, $W_{Y_s^t,\mathrm{loc}}^u(p_s^G)$, we abbreviate as $\Phi_s=\Phi_{p_s^G}$ a chart in which the associated Poincar\'e map is linearized\footnote{Note that the linear map $\mathcal L$ does not depend on the parameter $s$ due to the isospectrality condition.} as in~\eqref{appli_lin}, we abbreviate as $\hat\Phi_s=\hat\Phi_{p_s^G}$ a coarse chart as in Subsection~\ref{sec_reinterpret}, and we denote by $H_s$ the conjugacy between the suspension flows $Y_0^t$ and $Y_s^t$,
$$
H_s \circ Y_0^t=Y_s^t \circ H_s,\quad \forall\, t \in \mathbb{R}. 
$$
Note that all these objects, except  $\Phi_s$, can be chosen to vary continuously with respect to the parameter $s\in [0,1]$, cf. Remark~\ref{remark_coarse}. Moreover, by construction of the parametrizations $\Phi_s$, and similarly for $\hat \Phi_s$, we have $H_s \circ \Phi_0((-1,1)^2 \times \{(0,0)\})\subset  \Phi_s((-1,1)^2 \times \{(0,0)\})$, because the local stable manifold $W_{Y^t,\mathrm{loc}}^s(p^G)\subset \Sigma_0$ is mapped to the local stable manifold $W_{Y_s^t,\mathrm{loc}}^s(p_s^G)\subset \Sigma_s$ by the conjugacy $H_s$. 

Let us consider the set 
$$
\mathcal{G}:=\{s \in [0,1]:H_{s}(W_{Y^t,\mathrm{loc}}^{ss}(p^G))= W_{Y_s^t,\mathrm{loc}}^{ss}(p_s^G)\}. 
$$
We want to show that the set $\mathcal{G}$ is equal to the whole interval $[0,1]$. Given $\hat \xi_0 \in (-1,1)$, we define H\"older continuous functions $[0,1]\ni s\mapsto \hat \xi^\circ(\hat \xi_0,s)$ and $[0,1]\ni s \mapsto \xi^\circ(\hat \xi_0,s)$ implicitly by  
$$
(\hat\xi^\circ(\hat \xi_0,s),\xi^\circ (\hat \xi_0,s),0,0):=\hat\Phi_{s}^{-1} \circ H_{s} \circ \hat\Phi_0(\hat \xi_0,0,0,0),\quad 
\forall\, s\in [0,1].
$$
We argue by contradiction and assume that $\mathcal{G}\neq [0,1]$, i.e.,  
$$
\exists\, s_0\in[0,1]\quad\text{and}\quad\exists\, \hat\xi_0\in (-1,1)\quad\text{such that}\quad
\xi^\circ(\hat \xi_0,s_0)\neq 0.
$$ 
In the following, we fix such $s_0$ and $\hat \xi_0\in (-1,1)$ and abbreviate $\hat\xi^\circ(s):=\hat\xi^\circ(\hat \xi_0,s)$ and $\xi^\circ(s):=\xi^\circ(\hat \xi_0,s)$, for $s \in [0,1]$. 
Without loss of generality, we can assume that $\xi^\circ(s_0)> 0$. 
By the continuity of strong stable manifolds, there exists a small strong stable segment $W_0\subset W_{Y^t,\mathrm{loc}}^{ss}(p^G)$ which contains the point $\hat\Phi_0(\hat \xi_0,0,0,0)$ such that  $H_{s_0}(W_0)$ is disjoint with the strong  stable manifold $W_{Y_{s_0}^t,\mathrm{loc}}^{ss}(p_{s_0}^G)$. See Figure~\ref{fig:delallave}.
By continuity, there exist $\xi_0<0$, and $\varrho>0$ such that for any $(\hat \xi',\xi')\in B((\hat\xi_0,\xi_0),\varrho)$, letting
$(\hat\xi'(s),\xi'(s),0,0):=\hat\Phi_s^{-1}\circ
H_s \circ \hat\Phi_0(\hat \xi',\xi',0,0) 
$, $s\in [0,1]$, we have $\xi'(s)<0$ for $s\approx 0$ and $\xi'(s)>0$ for $s\approx s_0$.  

By the density of homoclinic points, there exists a homoclinic point $q^G=\Phi_0(0,0,\eta_\infty,\hat \eta_\infty)=\hat\Phi_0(0,0,\eta_\infty^\circ,\hat \eta_\infty^\circ) \in W_{Y^t,\mathrm{loc}}^{u}(p^G)$, with $(\eta_\infty^\circ,\hat \eta_\infty^\circ)\in B((\eta,\hat \eta),r)$, such that for some time $T'>0$, the point $(q')^G:=Y^{T'}(q^G)$ satisfies $(q')^G=\Phi_0(\hat \xi_\infty,\xi_\infty,0,0)=\hat\Phi_0(\hat \xi_\infty^\circ,\xi_\infty^\circ,0,0) \in W_{Y^t,\mathrm{loc}}^{s}(p^G)$, with $(\hat \xi_\infty^\circ,\xi_\infty^\circ)\in B((\hat\xi_0,\xi_0),\varrho)$. For any $s \in [0,1]$, let us consider the respective continuations $q_s^G$ and $(q')_s^G$ of the points $q^G$ and $(q')^G$: 
\begin{align*}
q_s^G&=\Phi_s(0,0,\eta_\infty(s),\hat\eta_\infty(s))=\hat\Phi_s(0,0,\eta_\infty^\circ(s),\hat\eta_\infty^\circ(s)) \in W_{Y_s^t,\mathrm{loc}}^{u}(p_s^G),\\
(q')_s^G&=\Phi_s(\hat \xi_\infty(s),\xi_\infty(s),0,0)=\hat\Phi_s(\hat \xi_\infty^\circ(s),\xi_\infty^\circ(s),0,0) \in W_{Y_s^t,\mathrm{loc}}^{s}(p_s^G). 
\end{align*}
In particular, since $(\eta_\infty^\circ,\hat \eta_\infty^\circ)\in B((\eta,\hat \eta),r)$, by Proposition~\ref{prop_trois_quatre}, we have 
$$
\hat\zeta_{p^G}(q^G)=\hat{\mathcal{T}}_{p^G}^{ws}(\eta_\infty^\circ,\hat \eta_\infty^\circ)-\hat{P}_{p^G}(\eta_\infty^\circ,\hat \eta_\infty^\circ)\neq 0,
$$
and for $s \in (0,s_0)$, we have 
$$
\xi_\infty^\circ(s)<0,\, s \approx 0,\quad \xi_\infty^\circ(s)>0,\, s \approx s_0. 
$$  
Since the map $s \mapsto \xi_\infty^\circ(s)$ is continuous (by the continuity of $s \mapsto \hat\Phi_s$), by the intermediate value theorem, we deduce that $\xi_\infty^\circ(s')=0$, for some $s'\in (0,s_0)$. This means that at the parameter value $s'$ the homoclinic point $(q')^G_{s'}$ lies precisely on the strong stable manifold of the point $p^G_{s'}$ as indicated on Figure~\ref{fig:delallave} (recall item~\eqref{item_deux_coarse} from the definition of the coarse chart in Section~\ref{coarse_charts}).

\begin{figure}[!ht]
	\centering
	\includegraphics[width=1.05\textwidth]{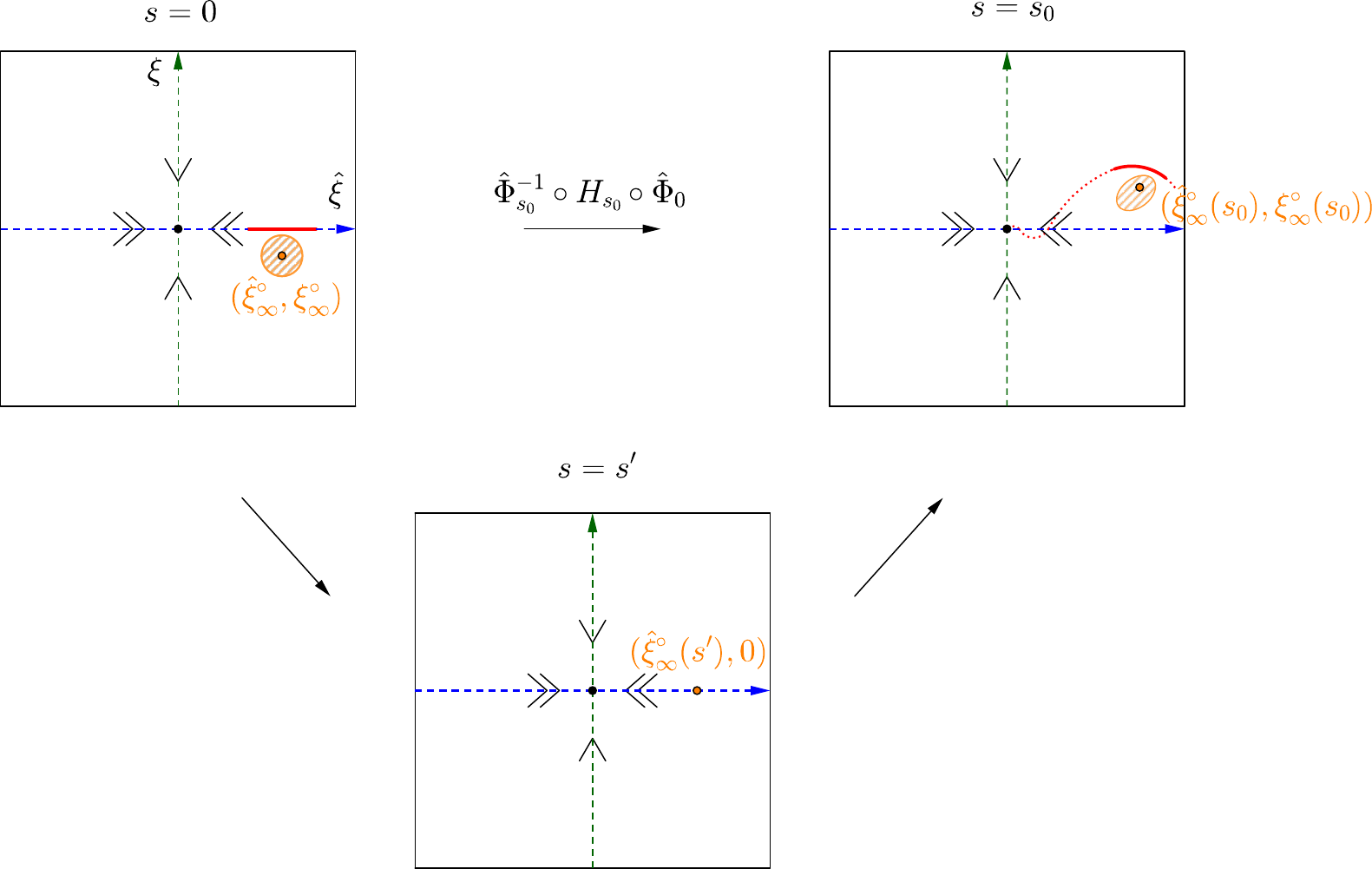}
	\caption{Change of sign of the ``center'' coordinate $\xi_\infty^\circ(s)$ as $s$ changes.}
	\label{fig:delallave}
\end{figure}

Let us now explain how to reach a contradiction. On the one hand, by the (time-preserving) conjugacy between the flows $Y_0^t$ and $Y_{s'}^t$, the periods of the shadowing periodic points $p_n$ and their continuations for $Y_{s'}^t$ are equal to the same numbers $T_n$, which is independent of $s$ (and the same applies to the excursion time $T'$). On the other hand, applying the asymptotic expansion given by  Proposition~\ref{prop_trois_un} to the flow $Y_0^t$,  we obtain 
    $$
    T_n-nT-T'=\omega_{p^G}(q^G,(q')^G) \mu^n+O(\theta^n),
    $$ 
    with $\theta \in (0,\mu)$ and $\omega_{p^G}(q^G,(q')^G):=\xi_\infty \big(\mathcal{T}_{p^G}^{ws}(\eta_\infty,\hat \eta_\infty)-P_{p^G}(\eta_\infty,\hat \eta_\infty)\big)  \neq 0$, while for the flow $Y_{s'}^t$, 
    $$
    T_n-nT-T'=\omega_{p_{s'}^G}(q_{s'}^G,(q')^G_{s'}) \mu^n+O(\theta^n),
    $$
    with $\omega_{p_{s'}^G}(q_{s'}^G,(q')^G_{s'}):=\xi_\infty(s') \big(\mathcal{T}_{p_{s'}^G}^{ws}(\eta_\infty(s'),\hat \eta_\infty(s'))-P_{p_{s'}^G}(\eta_\infty(s'),\hat \eta_\infty(s'))\big)$. Comparing the two expressions above, we thus have
    $$
    \omega_{p^G}(q^G,(q')^G)=\omega_{p_{s'}^G}(q_{s'}^G,(q')^G_{s'})\neq 0. 
    $$
    By Corollary~\ref{coro_coarse}, we deduce that the associated quantity $\hat\omega_{p_{s'}^G}(q_{s'}^G,(q')^G_{s'})$ in coarse charts also satisfies 
    $$
    \hat\omega_{p_{s'}^G}(q_{s'}^G,(q')^G_{s'})=\xi_\infty^\circ(s') \zeta_{p^G_{s'}}(q^G_{s'})\neq 0,
    $$
    yielding a contradiction since $s'$ was chosen such that $\xi_\infty^\circ(s')=0$. We conclude that $\mathcal{G}=[0,1]$, i.e., for any $s \in [0,1]$, $H_{s}(W_{Y^t,\mathrm{loc}}^{ss}(p^G))= W_{Y_s^t,\mathrm{loc}}^{ss}(p_s^G)$. Recall that for each $s\in [0,1]$, the flow $Y_s^t$ is a suspension flow over the Anosov diffeomorphism $G_s$; more precisely, $Y_s^t$ is the flow induced by the vertical flow in $\mathbb{T}^4 \times \mathbb{R}$ on the quotient manifold $M:=(\mathbb{T}^4 \times \mathbb{R})/\hat G_s$, with $\hat G_s\colon (x,\alpha)\mapsto (G_s(x),\alpha-(K+\log J_s(x)))$. In particular,   the local strong stable manifold $W_{Y_s^t,\mathrm{loc}}^{ss}(p^G)$ lifts to a local graph in $\mathbb{T}^4 \times \mathbb{R}$ over the local strong manifold $W_{G_s,\mathrm{loc}}^{ss}(p_s^G)$ for $G_s$. Moreover, the conjugacies $H_s$ between the Anosov flows locally project to conjugacies $h_s$ between the associated Anosov diffeomorphisms, from which we conclude that $h_{s}(W_{G,\mathrm{loc}}^{ss}(p^G))= W_{G_s,\mathrm{loc}}^{ss}(p_s^G)$. 
\end{proof} 

Let us now turn to the following perturbative version:
\begin{proposition}
Consider an Anosov diffeomorphism $G$, a center-expanding periodic point $p^G=Y^T(p^G)$ for its suspension flow $Y^t$. Let $(\eta,\hat \eta)\in (-1,1)^2$ and $r>0$ be as in Proposition~\ref{prop_trois_quatre}, 
such that the functions $\hat{\mathcal{T}}_{p^G}^{ws}$ and $\hat{P}_{p^G}$ associated to $Y^t$ satisfy 
\begin{equation}\label{non_vannish}
\hat{\mathcal{T}}_{p^G}^{ws}(\eta',\hat \eta')-\hat{P}_{p^G}(\eta',\hat \eta')\neq 0,\quad \forall\, (\eta',\hat \eta')\in B\big((\eta,\hat\eta),r\big). 
\end{equation}
Then there exists a $C^2$ open neighborhood $\mathcal{V}$ of $G$ such that for any $G_1\in \mathcal{V}$ which is isospectral to $G$, we have $h(W_{G, \mathrm{loc}}^{ss}(p^G))= W_{ G_1,\mathrm{loc}}^{ss}(p^{G_1})$, where $h$ is the conjugacy map between $G$ and $G_1$, and $p^{G_1}$ is the continuation of $p^G$ for $G_1$.  
\end{proposition}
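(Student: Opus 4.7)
The plan is to adapt the by-contradiction strategy of Proposition~\ref{prop_main_tech_match} to the perturbative setting, where no continuous family parameter $s$ is available. Accordingly, the intermediate value theorem applied to $s\mapsto \xi_\infty^\circ(s)$ must be replaced by a density argument which selects a single homoclinic orbit in which the sign mismatch between coarse coordinates is built in from the start. First I will shrink $\mathcal V$ so that Lemma~\ref{lemme_stable_nonzero} applies uniformly: $\hat\zeta_{p^{G_1}}(\eta',\hat\eta')\neq 0$ for every $G_1\in\mathcal V$ and $(\eta',\hat\eta')\in B((\eta,\hat\eta),r')$, for some $r'\le r$. By the continuity in $C^2$-topology on $G_1$ of the coarse charts, of the template $\hat{\mathcal T}^{ws}$ and of the series defining $\hat P$ (underlying the openness proof of Lemma~\ref{lemme_stable_nonzero}), the sign of $\hat\zeta_{p^{G_1}}$ on $B((\eta,\hat\eta),r')$ is then forced to agree with the sign of $\hat\zeta_{p^G}$; say both are positive.

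Next, assume for contradiction that $h(W_{G,\mathrm{loc}}^{ss}(p^G))\neq W_{G_1,\mathrm{loc}}^{ss}(p^{G_1})$. Lifting to the suspensions $Y^t$ and $Y_1^t$, and normalizing the time-preserving conjugacy $H$ between them so that $H(p^G)=p^{G_1}$, there is a point $x_0=\hat\Phi_0(\hat\xi_0,0,0,0)\in W_{Y,\mathrm{loc}}^{ss}(p^G)$ whose image $H(x_0)=\hat\Phi_1(\hat\xi_0^\bullet,\xi_0^\bullet,0,0)$ satisfies $\xi_0^\bullet\neq 0$; without loss of generality $\xi_0^\bullet>0$. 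Topological mixing for the Anosov flow $Y^t$ implies that the set of homoclinic pairs $(q,Y^{T'}(q))$ with $q\in W_{Y,\mathrm{loc}}^u(p^G)$, $Y^{T'}(q)\in W_{Y,\mathrm{loc}}^s(p^G)$ is dense in $W_{Y,\mathrm{loc}}^u(p^G)\times W_{Y,\mathrm{loc}}^s(p^G)$: given open sets $U,V$ on the two local invariant manifolds, $Y^{T'}(U)$ is a long piece of $W_Y^u(p^G)$ which, for $T'$ large, intersects $V$ by density of $W_Y^u(p^G)$, producing $q\in U\cap Y^{-T'}(V)\subset W_{Y,\mathrm{loc}}^u(p^G)\cap W_Y^s(p^G)$. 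Using this, I select a homoclinic orbit $(q^G,(q')^G=Y^{T'}(q^G))$ with $\hat\Phi_0$-coordinates of $q^G$ inside $B((\eta,\hat\eta),r')$ and $\hat\Phi_0$-coordinates of $(q')^G$ arbitrarily close to those of $x_0$, but with strictly negative center coordinate $\xi_\infty^\circ<0$.

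For this orbit, Proposition~\ref{prop_trois_un} applied to $Y^t$ and to $Y_1^t$ (the images $H(p_n)$ are the shadowing periodic points of $Y_1^t$ associated to $H(q^G),H((q')^G)$, with the same periods $T_n$ since $H$ is a time-preserving conjugacy) gives, upon matching the leading $\mu^n$ coefficients,
\begin{equation*}
\omega_{p^G}(q^G,(q')^G)=\omega_{p^{G_1}}\bigl(H(q^G),H((q')^G)\bigr).
\end{equation*}
By Corollary~\ref{coro_coarse}, these two quantities have the same sign, and so do their coarse-chart analogues $\hat\omega$. On the $G$ side, $\hat\omega_{p^G}(q^G,(q')^G)=\xi_\infty^\circ\,\hat\zeta_{p^G}(q^G)<0$ because $\hat\zeta_{p^G}(q^G)>0$ and $\xi_\infty^\circ<0$. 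On the $G_1$ side, $\hat\omega_{p^{G_1}}\bigl(H(q^G),H((q')^G)\bigr)=\tilde\xi_\infty^\circ\,\hat\zeta_{p^{G_1}}(H(q^G))$; since $H$ is $C^0$-close to the identity when $G_1$ is $C^2$-close to $G$ and the coarse charts vary continuously in $C^2$-topology on $G_1$, the $\hat\Phi_1$-coordinates of $H(q^G)$ lie in $B((\eta,\hat\eta),r)$, so $\hat\zeta_{p^{G_1}}(H(q^G))>0$ by Step~1; and the $\hat\Phi_1$-coordinates of $H((q')^G)$ are close to those of $H(x_0)$, so $\tilde\xi_\infty^\circ$ is close to $\xi_0^\bullet>0$, hence positive. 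This yields $\hat\omega_{p^{G_1}}>0$, contradicting the sign equality forced by period matching. Therefore $h(W_{G,\mathrm{loc}}^{ss}(p^G))\subset W_{G_1,\mathrm{loc}}^{ss}(p^{G_1})$; the reverse inclusion follows by exchanging the roles of $G$ and $G_1$ (and of $h$ and $h^{-1}$).

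The main obstacle is precisely the independent selection of $q^G$ (in the good ball where $\hat\zeta_{p^G}\neq 0$) and of $(q')^G$ (close to $x_0$ with prescribed sign of $\xi_\infty^\circ$): without a family parameter this cannot be achieved by a continuity argument, and one must exploit density of homoclinic pairs, handled above by topological mixing. A secondary difficulty, which is resolved by Step~1 together with the continuity of invariant structures under $C^2$-perturbations, is to transfer non-vanishing and sign of $\hat\zeta_{p^G}$ at $q^G$ to $\hat\zeta_{p^{G_1}}$ at $H(q^G)$, and to ensure that the $\hat\Phi_1$-coordinates of $H(q^G)$ and $H((q')^G)$ really are close to the respective coordinates of $q^G$ and $x_0$ for $G_1$ throughout $\mathcal V$.
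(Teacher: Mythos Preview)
Your proof is correct and follows essentially the same approach as the paper's: both use Lemma~\ref{lemme_stable_nonzero} to propagate the non-vanishing (and sign) of $\hat\zeta$ to $G_1$, assume by contradiction that some point $x_0\in W^{ss}_{Y,\mathrm{loc}}(p^G)$ is mapped off the strong stable of $p^{G_1}$, and then select a homoclinic pair $(q^G,(q')^G)$ with $q^G$ in the good ball and $(q')^G$ near $x_0$ with $\xi_\infty^\circ<0$, so that period matching via Proposition~\ref{prop_trois_un} together with Corollary~\ref{coro_coarse} forces a sign contradiction. Your version is slightly more explicit than the paper's (which just writes ``arguing as in the above proof of deformation rigidity'') in spelling out the density of homoclinic pairs via topological mixing and in justifying why the $\hat\Phi_1$-coordinates of $H(q^G)$ stay in the ball using $C^0$-closeness of $H$ to the identity and $C^2$-continuity of the coarse charts.
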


\begin{proof}
Without loss of generality, we can assume that the quantity in~\eqref{non_vannish} is positive. By 
Lemma~\ref{lemme_stable_nonzero}, we can thus fix a $C^2$ neighborhood $\mathcal{V}$ of $G$ such that for any $G_1\in \mathcal{V}$ conjugate to $G$, if we denote by $Y_1^t$ the associated suspension flow, and by $H$ the conjugacy between the suspension flows $Y^t$ and $Y_1^t$, $
H\circ Y^t=Y_1^t \circ H$, then we have 
\begin{equation}\label{non_vanish_tildeg}
    \hat{\mathcal{T}}_{p^{\tilde G}}^{ws}(H(\eta',\hat \eta'))-\hat{P}_{p^{\tilde G}}(H(\eta',\hat \eta'))> 0,\quad \forall\, (\eta',\hat \eta')\in B\big((\eta,\hat\eta),r\big).
\end{equation}
Fix such a diffeomorphism $G_1\in \mathcal{V}$. We take $Y_1^t$, $H$ as above, and abbreviate $p=p^G$ and $p_1=p^{G_1}$ in the following. We denote by $\Sigma,\Sigma_1$ transversals for the suspension flows $Y^t,Y_1^t$ containing the local stable and unstable manifolds at $p$ and $p_1$, respectively. Let $\Phi,\Phi_1$ be the associated linearizing charts, and let $\hat\Phi,\hat\Phi_1$ be the associated coarse charts. 
Assume by contradiction that 
$$
H(W_{Y^t,\mathrm{loc}}^{ss}(p))\neq  W_{Y_1^t,\mathrm{loc}}^{ss}(p_1).
$$
Then, there exists $\hat \xi_0\in (-1,1)$ such that 
$$
(\hat \xi^\circ,\xi^\circ,0,0):=\hat{\Phi}_1^{-1} \circ H \circ \hat{\Phi}(\hat \xi_0,0,0,0)\text{ has }\xi^\circ\neq 0.
$$
Arguing as in the above proof of deformation rigidity, we can then find a homoclinic point $q^G=\Phi(0,0,\eta_\infty,\hat \eta_\infty)=\hat\Phi(0,0,\eta_\infty^\circ,\hat \eta_\infty^\circ) \in W_{Y^t,\mathrm{loc}}^{u}(p)$, with $(\eta_\infty^\circ,\hat \eta_\infty^\circ)\in B((\eta,\hat \eta),r)$, such that for some time $T'>0$, the point $(q')^G:=Y^{T'}(q^G)\in W_{Y^t,\mathrm{loc}}^{s}(p)$ satisfies 
$$
(q')^G=\Phi(\hat \xi_\infty,\xi_\infty,0,0)=\hat{\Phi}(\hat \xi_\infty^\circ,\xi_\infty^\circ,0,0) \in W_{Y^t,\mathrm{loc}}^{s}(p),\text{ with }\xi_\infty^\circ<0,
$$
while for the points $q_1:=H(q^G)\in W_{Y_1^t,\mathrm{loc}}^{u}(p_1)$ and $q_1':=H((q')^G)=Y_1^{T'}(q_1)\in W_{Y_1^t,\mathrm{loc}}^{s}(p_1)$, we have
$$
q_1'=\Phi_1(\hat \xi_\infty^1,\xi_\infty^1,0,0)=\hat{\Phi}_1(\hat \xi_\infty^{\circ, 1},\xi_\infty^{\circ, 1},0,0),\text{ with }\xi_\infty^{\circ, 1}>0.
$$ 
We then have the following expansions for the (common) periods $T_n$ of the shadowing periodic points, for the flows $Y^t$ and $Y_1^t$:
\begin{align*}
    T_n-nT-T'&= \omega_{p}(q^G,(q')^G)\mu^n+O(\theta^n),\\
    T_n-nT-T'&= \omega_{p_1}(q_1,q_1') \mu^n+O(\theta^n),
\end{align*}
with $\theta\in (0,\mu)$, and 
\begin{align*}
\omega_{p}(q^G,(q')^G)&:=\xi_\infty \big(\mathcal{T}_{p}^{ws}(\eta_\infty,\hat \eta_\infty)-P_{p}(\eta_\infty,\hat \eta_\infty)\big),\\
\omega_{p_1}(q_1,q_1')&:=\xi_\infty^1\big(\mathcal{T}_{p_1}^{ws}(H(\eta_\infty,\hat \eta_\infty))-P_{p_1}(H(\eta_\infty,\hat \eta_\infty))\big).
\end{align*}
In particular, we deduce that $\omega_{p}(q^G,(q')^G)=\omega_{p_1}(q_1,q_1')$. 
Let $\hat\omega_{p}(q^G,(q')^G)$ and $\hat\omega_{p_1}(q_1,q_1')$ be the corresponding quantities in coarse charts. 
Arguing as in the proof of Proposition~\ref{prop_main_tech_match}, we deduce that 
$
\hat\omega_{p}(q^G,(q')^G)$ and $\hat\omega_{p_1}(q_1,q_1')$
have the same sign. But by the definition of the latter quantities, 
by~\eqref{non_vanish_tildeg} (recall that $(\eta_\infty^\circ,\hat \eta_\infty^\circ)\in B((\eta,\hat \eta),r)$), and since $\xi_\infty^\circ<0<\xi_\infty^{\circ,1}$, we have $\hat\omega_{p}(q^G,(q')^G)<0<\hat\omega_{p_1}(q_1,q_1')$, yielding a contradiction. 
\end{proof}

\appendix
\section{On the conjugacy between $5$-dimensional Anosov flows with a fine splitting into $1$-dimensional subbundles}\label{comments_thmc}


In this appendix, we introduce Theorem~\ref{claim_improved_thm}, which extends the results previously established for the restricted class of suspension flows over perturbations of de la Llave’s examples. While we omit the full proof of Theorem~\ref{claim_improved_thm}, we outline the main ideas below.
As we explain, some steps in the proof closely follow the arguments used for suspension flows, while the remaining points can be addressed using reasoning analogous to that in the proof of \cite[Theorem E]{GLRH}.\vspace{0.2cm}

\begin{theoalph}~\label{claim_improved_thm}
    Let $X^t\colon M \to M$ be a $C^\infty$ transitive Anosov flow on a $5$-manifold $M$ which is not a constant roof suspension flow, with a fine dominated splitting into $1$-dimensional subbundles:
    $$
    TM=E_X^{ss}\oplus E_X^{ws}\oplus \mathbb{R} X\oplus E_X^{wu}\oplus E_X^{uu}, 
    $$
    where $E_X^s:=E_X^{ss}\oplus E_X^{ws}$ and $E_X^u:=E_X^{wu}\oplus E_X^{uu}$ are uniformly contracted, respectively expanded, and $X:=\frac{d}{dt}\big|_{t=0} X^t$ is the generating vector field of the flow. For $*\in \{ss,s,u,uu\}$, let $W_X^*$ be the invariant foliation tangent to $E_X^*$. Assume that $X^t$ is ``center-dissipative'', i.e., $\log\det DX^t|_{E^{ws}\oplus E^{wu}}$ is not a coboundary. Then, at least one of the following points 
     is true:
    \begin{enumerate}
        \item\label{anomalous_reg1} 
        for each $x\in M$ the image of the stable subbundle $E_X^s$ in the quotient bundle $TM/E_X^u$, restricted to $W_{X,\mathrm{loc}}^u(x)$ has a  $C^\infty$ section;
        \item\label{anomalous_reg2} 
        for each $x\in M$ the image of the unstable subbundle $E_X^u$ in the quotient bundle $TM/E_X^s$, restricted to $W_{X,\mathrm{loc}}^s(x)$ has a  $C^\infty$ section;
        \item\label{normal_reg} there exists a $C^1$-small neighborhood $\tilde{\mathcal{U}}$ of $X$ such that if a $C^\infty$ Anosov flow $Y^t$ in the neighborhood $\tilde{\mathcal{U}}$  is $C^0$ conjugate to $X^t$, i.e., for some homeomorphism $H$, we have 
   \begin{equation}\label{conj_rel}
   H\circ X^t = Y^t \circ H,\quad \forall\, t \in \mathbb{R},
   \end{equation}
   then:
   \begin{enumerate} 
       \item\label{stepa} $X^t$ and $Y^t$ have the same weak-stable/weak-unstable eigenvalues at periodic points, i.e., for any $p=X^T(p)$, and $*\in \{ws,wu\}$, we have $\det DX^T(p)|_{E_X^{*}}=\det DY^T(H(p))|_{E_Y^{*}}$;
       \item\label{stepb} $H$ preserves strong foliations, i.e., for $*\in \{ss,uu\}$, $H(W_X^*(x))=W_Y^*(H(x))$, $\forall\, x \in M$;
       \item\label{stepc} if, moreover, $X^t$ and $Y^t$ have the same strong-stable/strong-unstable eigenvalues at corresponding periodic points, then $X^t$ and $Y^t$ are $C^{1+\textup{H}}$-conjugate. 
   \end{enumerate}
    \end{enumerate}  
\end{theoalph}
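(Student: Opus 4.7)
The plan is to mirror the strategy used throughout the body of the paper, but now freed from the suspension structure. First I would set up, for every periodic orbit $p=X^T(p)$ with four distinct hyperbolic return eigenvalues $\hat\mu_p<\mu_p<\lambda_p<\hat\lambda_p$, the weak-stable and strong-stable templates $\mathcal{T}^{ws}_p,\mathcal{T}^{ss}_p$ on $W^u_{X,\mathrm{loc}}(p)$ and their symmetric weak-unstable counterparts $\mathcal{T}^{wu}_p,\mathcal{T}^{uu}_p$ on $W^s_{X,\mathrm{loc}}(p)$, exactly as in~\eqref{template-def}. After a generic perturbation removing the resonances~\eqref{eq_resonances} (or by working in the coarse charts of Subsection~\ref{coarse_charts} when such resonances are present), I would define the correction series $P^{ws}_p$, $P^{wu}_p$ as in Proposition~\ref{prop_trois_un} and the associated functions $\zeta^{ws}_p:=\mathcal{T}^{ws}_p-P^{ws}_p$ and $\zeta^{wu}_p:=\mathcal{T}^{wu}_p-P^{wu}_p$. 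Center-dissipativity is exactly what guarantees the existence of both center-expanding periodic points ($\mu_p\lambda_p>1$) and center-contracting ones ($\mu_p\lambda_p<1$), so that both asymptotic expansions from Proposition~\ref{prop_trois_un} (direct and time-reversed) are available.

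Next I would establish the governing dichotomy. Suppose that case~\eqref{anomalous_reg1} fails, i.e., at every $x\in M$ the image of $E^s_X$ in $TM/E^u_X$ fails to admit a $C^\infty$ section on $W^u_{X,\mathrm{loc}}(x)$. The key claim, following the cohomological approach of~\cite[Theorem E]{GLRH}, is that this failure is equivalent to the existence of a center-expanding periodic point $p$ and a homoclinic point $q\in W^u_{X,\mathrm{loc}}(p)$ with $\zeta^{ws}_p(q)\neq 0$ in a \emph{stable} way (i.e., such non-vanishing persists on a ball $B((\eta,\hat\eta),r)$ of homoclinic parameters and in a $C^2$-neighborhood of $X^t$, by the continuity statement underlying Lemma~\ref{lemme_stable_nonzero}). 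This is because $\mathcal{T}^{ws}_p$ is precisely the temporal obstruction to a smooth section, and the series $P^{ws}_p$ is the natural coboundary correction generated by the linearized return map; their difference vanishing identically along the unstable leaves of a dense set of periodic points is equivalent to the quotient distribution admitting a smooth invariant section, by Livshits--Katok--Nitica--Torok cohomological matching. Symmetrically, if case~\eqref{anomalous_reg2} fails, then a center-contracting periodic point $\tilde p$ and a homoclinic point $\tilde q$ exist with $\zeta^{wu}_{\tilde p}(\tilde q)\neq 0$ robustly.

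Having such a pair $(p,q)$ and $(\tilde p,\tilde q)$, I would run the period-comparison argument of Proposition~\ref{prop_main_tech_match}, but now for the static pair $(X^t,Y^t)$ related by~\eqref{conj_rel}: the periods $T_n$ of the shadowing periodic orbits are preserved by any time-preserving conjugacy $H$, so comparing the two expansions
\[
T_n = nT+T'+\omega_p(q,q')\,\mu_p^{\,n}+O(\theta_p^{\,n}),\qquad T_n = nT'+T''+\omega_{p^Y}(H(q),H(q'))\,\mu_{p^Y}^{\,n}+O(\theta_{p^Y}^{\,n}),
\]
forces both $\mu_p=\mu_{p^Y}$ (hence step~\eqref{stepa} for the weak-stable eigenvalue; the weak-unstable one comes from the symmetric argument) and the matching of the leading coefficients. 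The latter, combined with non-vanishing of $\zeta^{ws}_p$ on $B((\eta,\hat\eta),r)$ and Corollary~\ref{coro_coarse}, forces the ``$\xi_\infty$-coordinate'' (which encodes the position of $H(q')$ relative to $W^{ss}_{Y,\mathrm{loc}}(p^Y)$ in coarse charts) to vanish, i.e., $H(q')\in W^{ss}_{Y,\mathrm{loc}}(p^Y)$. Varying $q'$ along a dense subset of $W^s_{X,\mathrm{loc}}(p)$ shows $H(W^{ss}_{X,\mathrm{loc}}(p))\subset W^{ss}_{Y,\mathrm{loc}}(p^Y)$, and saturating by the flow together with density of $W^s_X(p)$ yields $H(W^{ss}_X)=W^{ss}_Y$ globally; the symmetric argument at $\tilde p$ gives $H(W^{uu}_X)=W^{uu}_Y$, which is step~\eqref{stepb}. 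For step~\eqref{stepc}, matching of all four periodic eigenvalues plus preservation of all invariant one-dimensional foliations allows the Livshits-normal-form-Pesin-Sinai-Journé bootstrap recalled in the proof of Lemma~\ref{lemma3} to run verbatim, yielding $C^{1+\textup{H}}$ regularity of $H$.

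The main obstacle is the cohomological identification in the dichotomy paragraph: showing rigorously that identical vanishing of $\zeta^{ws}_p$ on $W^u_{X,\mathrm{loc}}(p)$ for a dense family of periodic points $p$ is equivalent to the existence of a $C^\infty$ section of the quotient $(E^s_X\bmod E^u_X)|_{W^u_{X,\mathrm{loc}}}$. In the suspension setting of the main theorems this step was bypassed because one could perturb freely using the Jacobian (Lemma~\ref{lemma_3.12}); in the intrinsic Anosov-flow setting of Theorem~\ref{claim_improved_thm} one must instead argue directly from Livshits-type cocycle rigidity, carefully handling that templates are only $C^{1+\textup{H}}$ and that the distribution $E^s_X$ itself is not smooth. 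A secondary difficulty is that without the suspension structure one does not have the logarithmic-Jacobian roof function at one's disposal; the hypothesis that $X^t$ is center-dissipative (and not a constant roof suspension) must be used here to produce the needed center-expanding and center-contracting periodic orbits and to rule out degenerate configurations where all $\omega_p(q,q')$ vanish for trivial reasons.
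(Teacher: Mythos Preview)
Your overall architecture is right and matches the paper's outline: period expansions at center-expanding/contracting periodic points, a dichotomy between ``robust vanishing of $\zeta$'' (leading to anomalous regularity) and ``robust non-vanishing'' (leading to rigidity), and then the foliation-matching plus Journ\'e bootstrap. But two technical ingredients you invoke are not the ones the paper actually uses, and the ones you propose would be hard to make work.

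First, you build everything on linearizing charts at periodic points (or coarse charts). The paper explicitly flags this as the obstruction to passing from suspensions to general Anosov flows: those charts depend on non-resonance conditions and are \emph{not uniform} in the basepoint, so one cannot run the dichotomy over a dense family of periodic points. The fix is to replace them by the uniform adapted charts of Tsujii--Zhang, defined at \emph{every} $x\in M$, in which the hitting-time jets along $W^{s/u}_{X,\mathrm{loc}}(x)$ are normalized to polynomials of uniformly bounded degree. In these charts the correction term $\tilde P_p$ in the period expansion is itself a bounded-degree polynomial. This is what makes the dichotomy clean: if $\tilde\zeta_p=\tilde{\mathcal T}^{ws}_p-\tilde P_p$ vanishes on a dense set of homoclinic parameters, then the templates $\tilde{\mathcal T}^{ws}_x$ are themselves bounded polynomials, and the smooth section of $\bar E^s_X=E^s_X\bmod E^u_X$ over $W^u_{X,\mathrm{loc}}(x)$ is written down directly as $\bar Z_x(y)=(0,1,\tilde{\mathcal T}^{ws}_x(y))$. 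So case~\eqref{anomalous_reg1} comes from polynomial structure in uniform charts, not from a Livshits--Katok--Nitica--T\"or\"ok cocycle argument; the latter route would have to cope with the low regularity of $E^s_X$ that you yourself flag, whereas the polynomial mechanism sidesteps it entirely.

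Second, for step~\eqref{stepa} your argument only recovers $\mu_p=\mu_{p^Y}$ at those periodic points $p$ where $\tilde\zeta_p\neq 0$ for some homoclinic $q$. To upgrade this to matching at \emph{all} periodic points the paper appeals to the positive-proportion Livshits theorem of Dilsavor--Reber: the set of good $p$ has positive proportion (in the appropriate sense), and this forces the cocycles $\log\|DX^T|_{E^{ws}_X}\|$ and $\log\|DY^T\circ H|_{E^{ws}_Y}\|$ to be cohomologous. Without this step you cannot conclude~\eqref{stepa} in full. Once these two points are in place, your treatment of~\eqref{stepb} and~\eqref{stepc} is essentially the paper's.
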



\begin{remark}
    We stress that cases~\eqref{anomalous_reg1}-\eqref{anomalous_reg2} of Theorem~\ref{claim_improved_thm} are ``exceptional''. Indeed, while the bundle $TM/E_X^u$ is smooth along unstable leaves $W_{X,\mathrm{loc}}^u(x)$, $x \in M$, the bundle $E_X^s$ is in general only Hölder continuous along unstable leaves, hence we do not expect the existence of smooth sections as in case~\eqref{anomalous_reg1} (and similarly for case~\eqref{anomalous_reg2}). Note that while case~\eqref{anomalous_reg1} provided some additional partial smoothness of $E^s_X$, we do not claim that these local sections overlap in coherent way to give a global section. It is, however, natural to expect that with further arguments case~\eqref{anomalous_reg1} can be improved to a dichotomy: either image of $E^s_X$ or the image of $E^{ws}_X$ in $TM/E_X^u$ is smooth along the unstable foliation.
\end{remark}
\begin{remark}
Let us also emphasize that the core of this paper focuses on showing items~\eqref{stepb}-\eqref{stepc} within the framework of conjugate suspension Anosov flows over isospectral diffeomorphisms near de la Llave’s examples, under appropriate genericity conditions.
In this context, the eigenvalue matching described in item~\eqref{stepa} is already ensured by the isospectrality of the underlying diffeomorphisms. 
\end{remark}

Below we give some details about the main steps of the proof of Theorem~\ref{claim_improved_thm}. As in the statement of the theorem, we fix a $5$-dimensional transitive Anosov flow $X^t$ which is not a constant roof suspension flow. 

In Subsection~\ref{subs_localcoords}, given a periodic point $p=X^T(p)$ satisfying certain non-resonance conditions, we considered linearizing coordinates for the Poincar\'e map of a smooth transversal $\Sigma_p$ and defined certain ``templates'' $\mathcal{T}_p^{ws},\mathcal{T}_p^{ss}$ along its unstable manifold $W_{X,\mathrm{loc}}^u(p)$. 
The issue is that these objects depend on the non-resonance conditions, hence are not uniform with respect to the periodic point $p$.

To handle this issue, similarly to the strategy in~\cite{GLRH}, we replace linearizing coordinates with less nice normal forms, in order to keep uniformity with respect to the base point, following the construction of adapted charts in the work of Tsujii-Zhang~\cite{tsujiizhang}  (see also~\cite[Proposition 3.2]{GLRH}). More precisely, we can construct a continuous family $\{\imath_x\}_{x \in M}$ of uniformly smooth charts such that the dynamics of $X^t$ in these charts is partially normalized. We associate to these charts a family $\{\tilde{\Sigma}_x\}_{x \in M}$ of uniformly smooth sections transverse to the flow $X^t$, where for each $x \in M$, $\tilde{\Sigma}_x$ contains the local $2$-dimensional stable and unstable manifolds $W_{X,\mathrm{loc}}^s(x),W_{X,\mathrm{loc}}^u(x)$; in particular, the jet of the hitting times of $X^t$ along  $W_{X,\mathrm{loc}}^s(x),W_{X,\mathrm{loc}}^u(x)\subset\tilde \Sigma_x$  is normalized to be a polynomial with uniformly bounded degree. In the same way as in Subsection~\ref{subs_localcoords}, we can then define templates $\tilde{\mathcal{T}}_x^{ws},\tilde{\mathcal{T}}_x^{ss}$ along $W_{X,\mathrm{loc}}^u(x)$, and similarly, templates $\tilde{\mathcal{T}}_x^{wu},\tilde{\mathcal{T}}_x^{uu}$ along $W_{X,\mathrm{loc}}^s(x)$, where these objects are now defined relative to the sections $\tilde{\Sigma}_x$. 

Then, following the proof of~\cite[Proposition 4.2]{GLRH}, given a center-expanding periodic point $p$, homoclinic points $q,q'=X^{T'}(q)$, and for the associated sequence $(p_n)_n$ of periodic points, we can derive asymptotic expansions of their periods $T_n$ similar to those obtained in Proposition~\ref{prop_trois_un}, but in terms of the templates $\tilde{\mathcal{T}}_x^{ws}$ corresponding to the uniform charts: the formula in Proposition~\ref{prop_trois_un} would then become 
$$
    T_n=nT+T'+\tilde{\xi}_\infty \big(\tilde{\mathcal{T}}_p^{ws}(\tilde{\eta}_\infty,\tilde {\hat{\eta}}_\infty)-\tilde{P}_p(\tilde{\eta}_\infty,\tilde {\hat{\eta}}_\infty)\big)  \mu^n+O(\theta^n),
$$
where $\theta \in (0,\mu)$, where $\tilde{P}_p(\tilde{\eta}_\infty,\tilde {\hat{\eta}}_\infty)$ is a polynomial with uniformly bounded degree, and where 
$\tilde{\xi}_\infty,\tilde{\eta}_\infty,\tilde {\hat{\eta}}_\infty$ represent the coordinates of $q,q'$ in $\tilde{\Sigma}_p$. 

Instead of performing a perturbation as in Section~\ref{section_nonvanish} to ensure that the leading term $\tilde{\zeta}_p(q,q'):=\tilde{\mathcal{T}}_p^{ws}(\tilde{\eta}_\infty,\tilde {\hat{\eta}}_\infty)-\tilde{P}_p(\tilde{\eta}_\infty,\tilde {\hat{\eta}}_\infty)$ in the above expansions is non-vanishing, we then argue as in~\cite{GLRH} and show a dichotomy: 
\begin{enumerate}
    \item either the term $\tilde{\zeta}_p(q,q')$ vanishes in a ``robust way'' (i.e., for a dense set of homoclinic points, by varying the points $p,q,q'$), in which case, we can conclude that the templates $\{\tilde{\mathcal{T}}^{ws}_x\}_{x \in M}$ are actually bounded polynomials; 
    this corresponds to  case~\eqref{anomalous_reg1} of Theorem~\ref{claim_improved_thm}. 
    
    Let us outline the latter implication. Denote by $\bar E^s_X$ the image of $E^s_X$ in the quotient bundle $TM/E^u_X$. Recall that given a point $x \in M$ and $y\in W^u_{X,\mathrm{loc}}(x)$, the weak stable template $\tilde{\mathcal T}_x^{ws}(y)$ is defined so that
    $$
    Z_x(y):=(0,1,\tilde{\mathcal T}_x^{ws}(y), *, *)\in E_X^s(y).
    $$
    Accordingly we have 
    the local sections 
    \begin{equation}\label{section_zbarre}
    \bar Z_x\colon W^u_{X,\mathrm{loc}}(x)\to \bar E_X^s,\quad y \mapsto  \bar Z_x(y):=(0,1,\tilde{\mathcal T}_x^{ws}(y))\in\bar E_X^s(y).
    \end{equation}
    Such sections have the following properties:
    \begin{itemize}
        \item since $\{\tilde{\mathcal{T}}^{ws}_x\}_{x\in M}$ are bounded polynomials, for any $x \in M$, the section $\bar Z_x(\cdot)$ is uniformly smooth in $y\in W^u_{X,\mathrm{loc}}(x)$; 
        \item $\textup{span}(\bar Z_x(x))=\bar E_X^{ws}(x)$, where $\bar E_X^{ws}$ is the image of $E_X^{ws}$ in $TM/E^u_X$;
        \item the span of these sections is locally invariant, i.e., for $|t|$ sufficiently small, we have $D\bar X^t(y)(\bar Z_x(y))\in \textup{span}(\bar Z_{X^t(x)}(X^t(y)))$, where $D\bar X^t(y)\colon \bar E_X^s(y)\to \bar E_X^s(X^t(y))$ is the map induced by the differential of the flow $X^t$ between quotient spaces; indeed, in adapted charts, for small $t$, $X^t$ simply becomes the vertical translation by $t$ along the third coordinate (see~\cite[Proposition 3.2]{GLRH});
    \end{itemize}
    \item or for a set of ``positive proportion'' of center-expanding periodic points $p\in M$, we can find $q,q'$ such that $\tilde{\zeta}_p(q,q')\neq 0$, in which case we can deduce several things as in case~\eqref{normal_reg} of Theorem~\ref{claim_improved_thm}:
    \begin{enumerate}
        \item from period expansions, we can recover the weak-stable eigenvalue $\mu=\mu_p\in (0,1)$ for this positive proportion set of periodic points $p$; in particular, by the positive proportion Livshits theorem of~\cite{DilMReber}, if $Y^t$ is another Anosov flow conjugate to $X^t$, with generating vector field sufficiently $C^1$-close to the generating vector field of $X^t$, then we deduce that such eigenvalues have to match for the two flows as in item~\eqref{stepa} of Theorem~\ref{claim_improved_thm};
        \item if $Y^t$ is another Anosov flow conjugate to $X^t$ through a homeomorphism $H$, then, by the non-vanishing of the leading term $\tilde{\zeta}_p(q,q')\neq 0$ for suitable choices of $p,q,q'$, and since such property is robust (similarly to what was shown in Lemma~\ref{lemme_stable_nonzero}),  exactly as we did for suspension flows in the proof of Theorems~\ref{main_theorem}-\ref{main_theorem_bis}, if the generating vector field of $Y^t$ is sufficiently $C^1$-close to the generating vector field of $X^t$, then we can show that the conjugacy $H$ sends the strong stable foliation $W_X^{ss}$ of $X^t$ to the strong foliation $W_Y^{ss}$ of the conjugate flow $Y^t$ as claimed in item~\eqref{stepb} of Theorem~\ref{claim_improved_thm};
        \item finally, from preservation of these foliations, and isospectrality conditions, we can deduce item~\eqref{stepc} of Theorem~\ref{claim_improved_thm} in the same way as we did in the proof of Theorems~\ref{main_theorem}-\ref{main_theorem_bis}. 
    \end{enumerate}
\end{enumerate}
Of course, case~\eqref{anomalous_reg2} of Theorem~\ref{claim_improved_thm} occurs in the symmetric way, when considering center-contracting periodic points instead of center-expanding periodic points. Similarly, information about strong unstable foliations can be deduced from the non-vanishing of the leading exponential term of associated period expansions. 

Let us also stress that the case of ``swapping SRB measures'' which appears in the similar result~\cite[Theorem E]{GLRH} does not occur here, namely in case~\eqref{stepa}, due to the $C^1$-closeness we assume on the vector fields $X,Y$ (see, e.g., the proof of~\cite[Theorem C]{GLRH}
for more details).


\bibliographystyle{alpha}

\bibliography{bib}

\end{document}